\numberwithin{equation}{section}
\newtheorem{theorem}{Theorem}
\newtheorem{thm}[theorem]{Theorem}
\newtheorem{cor}[theorem]{Corollary}
\newtheorem{lemma}[theorem]{Lemma}
\newtheorem{prop}[theorem]{Proposition}
\theoremstyle{definition}
\newtheorem{definition}[theorem]{Definition}
\newtheorem{problem}[theorem]{Problem}
\theoremstyle{remark}
\newtheorem{remark}[theorem]{Remark}
\numberwithin{theorem}{section}
\newcommand {\Z}{\mathbb Z}
\newcommand{\eps}{\varepsilon}
\newcommand{\cF}{\mathcal{F}}
\newcommand{\cP}{\mathcal{P}}
\newcommand{\SB}{\mathrm{SB}}
\newcommand{\SD}{\mathrm{SD}}
\newcommand{\SF}{\mathrm{SF}}
\newcommand{\Q}{\mathbb{Q}}
\newcommand{\N}{\mathbb{N}}
\newcommand{\R}{\mathbb{R}}
\begin{document}

\title[Descriptive set theory of separable Fréchet spaces]{Descriptive set theory of separable Fréchet spaces}


\author{ B. M. Braga}
\address{IMPA, Estrada Dona Castorina 110, 22460-320, Rio de Janeiro, RJ, Brazil}\email{demendoncabraga@gmail.com}

\author{ W. Corrêa}
\address{Departamento de Matemática, Instituto de Ciências  Matemáticas e de  Computação, Universidade de São Paulo,
Avenida Trabalhador São Carlense 400, 13566-590, São Carlos, SP, Brazil.}\email{willhans@icmc.usp.br}

\author{ V. Ferenczi}
\address{Departamento de Matemática, Instituto de Matemática e Estatística, Universidade de São Paulo, rua do Matão 1010, 05508-090 São Paulo SP, 
Brazil, and Equipe d’Analyse Fonctionnelle, Institut de Mathématiques de Jussieu, Sorbonne Université,  4 place Jussieu - Boîte courrier 247, 75252 Paris Cedex 05, 
France.}\email{ferenczi@ime.usp.br}

\thanks{B. M. Braga  was partially supported by FAPERJ, grant E-26/200.167/2023,  by CNPq, grant 303571/2022-5, and by Serrapilheira, grant R-2501-51476. W. Corrêa was supported by FAPESP, grants 2023/06973-2 and 2023/12916-1, and by CNPq, grant 304990/2023-0. V. Ferenczi  was supported by FAPESP, grant 2023/12916-1, and by CNPq, grant 304194/2023-9.}
 
\date{}

\begin{abstract}
In the  past few decades, much has been done regarding the descriptive set theory of  separable Banach spaces. However, the descriptive properties of separable Fréchet spaces have not yet been investigated. In these notes, we look at this problem, its relation with the (now standard) theory for separable Banach spaces, and we compute/estimate the descriptive complexity of some classical classes of separable Fréchet spaces such as Fréchet-Hilbert, Schwartz, nuclear, and Montel spaces. Our main result shows that the class of Montel spaces is complete coanalytic. Noticeably, this applies outside the realm of descriptive set theory and solves an old problem regarding   Fréchet spaces satisfying the Heine--Borel property (i.e., Montel spaces). Precisely, we show that there is no separable Montel space containing isomorphic copies of all  separable Montel spaces. 
\end{abstract}

 \maketitle
\begin{center}
    \emph{Dedicated to the memory of Joe Diestel}
\end{center}

\section{Introduction}\label{intro1}

The relation of isomorphism between locally convex topological vector spaces is notoriously complicated even if one restricts to the more tamed class of Banach spaces, which makes any sort of classification program for this class of spaces virtually impossible (see \cite{Bossard2002,FerencziLouveauRosendal2009JLMS}). However, as it has already been done for the class of separable Banach spaces, descriptive set theory provides an appropriate framework to formalize and study classes  or properties of spaces (see \cite{Bossard1993French,Bossard2002,DodosBook2010}). While Banach spaces form an important class of locally convex topological spaces, there are many naturally occurring spaces in functional analysis and partial differential equations which are not Banach; e.g.,   the space of entire functions on the complex plane, the space of infinitely differentiable functions on $[0,1]$, the Schwartz space of functions with rapidly decreasing derivatives, and  the space of continuous functions on $\sigma$-compact Hausdorff spaces.  

The main goal of this article is to extend the study of descriptive complexity  to the larger class of Fréchet spaces. With this goal in mind, a fair amount of attention is given to carefully developing the theory and part of it is at times an adaptation of known techniques to our scenario. We  highlight here   our most interesting result, which is far from a mere adaptation and requires novel insights: we show that the class of \emph{Montel spaces} --- i.e.,   Fréchet spaces which satisfy the Heine--Borel property ---  is complete coanalytic (Theorem \ref{ThmMonCompleteCoanalytic}). Computing the exact complexity of a non-Borel property is usually very challenging and this is indeed the case for Montel spaces (we return to this with more details later in this introduction).  

The computation that the class of Montel spaces is complete coanalytic has an important consequence which goes beyond the realm of descriptive set theory, we explain this now. A class of separable Fréchet spaces is said to have an \emph{universal element} if there is a space in this class which contains isomorphic copies of all other spaces in this class. The first universality result in Fréchet theory is perhaps the Mazur--Orlicz theorem stating that the space of continuous functions on $\R$, $C(\R)$, is universal for all separable Fréchet space (see \cite{MazurOrlicz1948Studia}   or \cite[Page 101]{RolewiczSecondEdition1984}). Later, motivated by Grothendieck's seminal   work on nuclear spaces (see \cite{Grothendieck1955MAMS}), T. Komura and Y. Komura obtained their celebrated result showing that the class of nuclear spaces has a universal element (see  \cite{KomuraKomura1965MathAnn}\footnote{See   \cite[Corollary 29.9]{MeiseVogt1997Book} for a version in English.}    and Section \ref{someclasses} for more details).   From the beginning of the 70's onward, universality in Fréchet spaces turned into a topic of intensive research. For instance, on the opposite direction than for nuclear spaces, V. Moscatelli showed that the class of (Fréchet) Schwartz spaces has no universal element (see \cite{Moscatelli1975ComptesRendus} or \cite[Proposition on page 277]{Bellenot1980Compositio}). Moreover,   universality for Fréchet spaces is  one of the main topics of the classic monograph \cite{RolewiczSecondEdition1984} by S. Rolewicz   (see      also \cite{Rosenberger1973TAMS,Kalton1977Studia,Moscatelli1978Crelle,Jarchow1981Book,MeiseVogt1997Book} for more on universality of classes of Fréchet spaces). However, despite the interest in this area, classic methods have not been able to deal   with Montel spaces; which, together with nuclear and Schwartz spaces, is arguably one of the most important   classes of Fréchet spaces (which are not Banach). As an application of the techniques   developed herein, we show that the class of separable Montel spaces contains no universal elements (Corollary \ref{CorNoMontelUniversal}).  This should emphasize   the strength of descriptive methods in analysis.\\

 As one of our focuses is to relate the,  now classic, descriptive set theory of separable Banach spaces with the theory for Fréchet spaces developed herein, we start by quickly recalling some of its basics (for a more careful overview and precise definitions, we refer the reader to  Section \ref{intro}). The class of all separable Banach spaces is not a set and, therefore,  one starts by coding this class in an appropriate fashion: the space of continuous functions on the Cantor set, $C(\Delta)$, is isometrically universal for the class of separable Banach spaces and hence 
 \[\SB=\{X\subseteq C(\Delta)\mid X\text{ is a closed vector subspace}\},\] endowed with the  Effros Borel structure, becomes a standard Borel space which serves as a coding for the class of all separable  Banach spaces. In fact, although there are  other natural codings for this class of spaces (e.g., as quotients of $\ell_1$, as sequences in $C(\Delta)$, etc), all ``reasonable'' codings  give rise to the same theory --- see \cite[Proposition 2.8]{Bossard2002} or Section  \ref{appendix} for details (see also \cite{CuthDolezalDouchaKurka2022,CuthDolezalDouchaKurka2024}).
These tools allow us to study descriptive properties of classes of Banach spaces. For instance, we have:
\begin{enumerate}

\item [1.]  $\langle \ell_p \rangle = \{X\in\SB\mid X\cong \ell_p\}$ is Borel for every $1 < p < \infty$ (see \cite[Page 130]{Bossard2002} and \cite{Godefroy2023}),
\item[2.] $\langle c_0\rangle=\{X\in\SB\mid X\cong c_0\}$ is complete  analytic  (see \cite[Theorem 1.1]{Kurka2019isomorphism}),
\item[3.]  $\rm{REFL}=\{X\in\SB\mid X\text{ is reflexive}\}$ is complete coanalytic   (see \cite[Theorem 2.5]{DodosBook2010}), and
\item[4.]  $\SD=\{X\in\SB\mid X^*\text{ is separable}\}$ is complete  coanalytic  (see \cite[Theorem 2.11]{DodosBook2010}).
\end{enumerate} 
See also \cite{Godefroy2017} for more examples of spaces with a Borel isomorphism class. Besides the intrinsic interest in understanding the precise complexity of Banach space properties, these methods have important applications outside the strict point of view of descriptive set theory; for example, these methods have deep applications to universality problems (see \cite{Bossard2002,ArgyrosDodos2007AdvMath,Dodos2009TAMS,DodosFerenczi2007FundMath,Dodos2010Studia,Braga2014CzMathJ}).
 
As Fréchet spaces are not \emph{metric} but rather \emph{metrizable}, it does not make sense to deal with metric properties such as the existence of an \emph{isometrically universal} space for all separable Fréchet spaces. However, isomorphic properties behave well in this category and there are  \emph{isomorphically universal} separable Fréchet spaces, such as $C(\R)$. Our study of the complexity of Fréchet space properties starts then by coding all separable Fréchet spaces as 
\[\SF=\{X\subseteq C(\R)\mid X\text{ is a closed vector subspace}\},\]
endowed with the Effros Borel structure (see Section \ref{SFcoding}). Conceptually, the first question to address is how $\SB$ and $\SF$ relate to each other. Precisely, it is of upmost importance (1) to  compute the complexity of the spaces in $\SF$ which are Banach spaces in disguise and (2) to understand whether $\SB$ and $\SF$ are compatible when  coding classes of Banach spaces. We deal with both these issues in Section \ref{SFcoding}.  For (1), we show that 
\[\SB_{\rm F}=\{X\in \SF\mid X\ \text{ is normable}\}\]
is Borel (see Proposition \ref{SBborelSF}). For (2),  we show that if $\cP$ is an isomorphic Banach space property, then its coding $\rm P$ and $\rm P_{\rm F}$ as subsets in $\SB$ and $\SF$, respectively, i.e., 
\[\rm P=\{X\in \SB\mid X\ \text{ satisfies }\ \mathcal P\} \ \text{ and }\ \rm P_{\rm F}=\{X\in \SF\mid X\ \text{ satisfies } \ \mathcal  P\},\]
coincide (see Theorem \ref{consistent}). This shows, in particular, that the study of $\SF$ does indeed generalize the one of $\SB$.

In view of the previous paragraph, the remainder of the article is dedicated to the study of the space of Fréchet spaces which are not normable, i.e., are not isomorphic to a Banach space. We write
\[\SF_{\neg \rm B}=\SF\setminus \SB_{\rm F}\]
to denote this space. As $\SB_{\rm F}$ is Borel (see Proposition \ref{SBborelSF}), $\SF_{\neg \rm B}$ is also a standard Borel space. Hence, in order to understand $\SF$, it is indeed enough to study $\SB$ and  $\SF_{\neg \rm B}$ separately. We start our study of $\SF_{\neg \rm B}$ by presenting a method that allows us to conclude that the ``non Banach counterpart'' of certain well-studied Banach space properties such as reflexivity and having a separable dual are also non Borel (see Corollary \ref{CorollarySDandReflNotBorel}). These results are later strengthened  considerably and we show that both these classes are $\Pi^1_1$-hard, meaning that every coanalytic subset Borel reduces to it (see Theorems \ref{ThmREFLnotBanachPI11Hard} and \ref{ThmSDnotBanachPI11Hard}).

From Subsection \ref{SubsectionFH} onward, we investigate some classical properties of Fréchet spaces such as Fréchet-Hilbert, Schwartz, nuclear and Montel spaces. Apart from the former, all the remaining properties are genuinely non Banach in the sense that only finite dimensional Banach spaces can satisfy them. As for Fréchet-Hilbert spaces, besides finite dimensional spaces, spaces isomorphic to $\ell_2$ also satisfy this property; however, $\R^\N$, $\ell_2^\N$, and $\ell_2\oplus \R^\N$ are also Fréchet-Hilbert and not normable. We show that the following classes of separable Fréchet spaces have Borel codings in $\SF$:
 \begin{enumerate}
 \item [I.] $\rm FH=\{X\in \SF\mid X\ \text{ if Fréchet-Hilbert}\}$ (Theorem \ref{ThmFHBorel}),
 \item [II.] The isomorphism class of $\R^\N$, $\ell^\N_2$, and $\ell_2\oplus \R^\N$ (Theorem \ref{ThmFHBorel}),
 \item[III.] $\mathrm{Sch}=\{X\in \SF\mid X\text{ is a Schwartz space}\}$ (Theorem \ref{schwartz}), and 
 \item[IV.] $\mathrm{Nuc}=\{X\in \SF\mid X\text{ is a nuclear  space}\}$ (Theorem \ref{nuclear}).
 \end{enumerate}
We also show that some classes are non Borel and quite complicated in nature. For instance, in Theorem \ref{ThmComplexityCR}, we show that the isomorphism class of $C(\R)$ is complete analytic and, in particular, non Borel. 

Our main results come from studying the complexity of the class of Montel spaces. Recall that a Fréchet space is called \emph{Montel} if it satisfies the Heine--Borel property, i.e., if every bounded closed subset of it is compact. Philosophically speaking, the Heine--Borel property  can be seen as a counterpart of being Banach: while on the one hand, Banach spaces generalize to  infinite dimensional spaces  the geometry of finite dimensional spaces given by norms, the Heine--Borel property generalizes their topological behavior to the infinite dimensional scenario. In Subsection 
\ref{SubsectionMontel}, we show that
\[\mathrm{Mon}=\{X\in \SF\mid X\ \text{ is Montel}\}\]
is a complete coanalytic subset and, in particular, non Borel (see Theorem \ref{ThmMonCompleteCoanalytic}). As a consequence, 
we show that there is no  isomorphically universal space for the class of separable Montel spaces  (Corollary \ref{CorNoMontelUniversal}). Moreover, we obtain an even stronger result: if a separable Fréchet space $X$ contains isomorphic copies of all separable Montel spaces, then it must also contain isomorphic copies of $c_0$ and of all $\ell_p$'s, for $p\in [1,\infty)$ (see Corollary \ref{CorNoTrulyFrechetUniversal.V3}).

In order to obtain the results in the previous paragraph about Montel spaces, we use K\"{o}the matrices (see Defintions \ref{DefinitionKothe} and \ref{DefinitionKotheMontel}) and a characterization of when such matrices give rise to Montel spaces by Dieudonné and Gomes (see Theorem \ref{ThmDieudonneGomes}). Moreover, we also need the classical result that if $X$ is a Polish space which is not $\sigma$-compact, than the space of all sequences in $X$ without convergent subsequences is complete coanalytic (see Proposition \ref{PropMKComCoanalitic} for further details). With these two tools in hand and choosing $X$ appropriately, we are able to construct a Borel reduction of a complete coanalytic set into the space of Montel spaces (see Lemma \ref{LemmaFunctionReducingMontel}). It should be noticed that this Borel reduction is one of the main novelties of this article. In the usual descriptive set theory of Banach spaces, such Borel reductions are   obtained with the help of trees of natural numbers and using that the well-founded trees are a complete coanalytic space; so, the ill-founded trees are complete analytic and the existence of branches will usually imply that a determined Banach space will ``appear'' in this reduction. This sort of reduction does not seem to work for Fréchet spaces which are not Banach so a novel style of reduction is needed.

We finish this paper with a section about open problems (Section \ref{SecOpenProb}) and a short appendix explaining how different codings for separable Fréchet spaces will usually not interfere in their descriptive theory (Section \ref{appendix}).

\section{Notation and Background}\label{intro}

\subsection{Notation} 
We write $\N=\{1,2,...\}$, $\R^+=\{x\in\R\mid x>0\}$, and $\Q^+=\{x\in\Q\mid x>0\}$.  Every time an index is omitted the reader should interpret the index as being the set of natural numbers $\N$. So, we will   often write  $(x_j)_j$ and  $\sum_j x_j$ instead of $(x_j)_{j\in\N}$  and $\sum_{j\in\N} x_j$, respectively. 

\subsection{Descriptive set theory} Here we recall the basics of descriptive set theory; for an excellent monograph on the subject, we refer the reader to \cite{KechrisBook1995}.
A separable topological space $X$ is said to be a \emph{Polish space} if there exists a complete metric on $X$ generating its topology. A continuous image of a Polish space into another Polish 
space is called an \emph{analytic set} and a subset of a Polish space whose complement is analytic is called \emph{coanalytic}. A measure space $(X,\mathcal{A})$, where $X$ is a set and $\mathcal{A}$ is a 
$\sigma$-algebra of subsets of $X$, is called a \emph{standard Borel space} if there exists a Polish topology on this set whose Borel $\sigma$-algebra coincides with $\mathcal{A}$. We 
define Borel, analytic and coanalytic sets in standard Borel spaces by saying that these are the sets that, by considering a compatible Polish topology, are Borel, analytic, and 
coanalytic, respectively. Observe that this is well defined, i.e., this definition does not depend on the Polish topology itself but only on its Borel structure. A function between two 
standard Borel spaces is called \emph{Borel measurable} if the inverse image of each Borel subset of its codomain is Borel in its domain. We usually refer to Borel measurable functions 
just as Borel functions.

Given a Polish space $X$, $\cF(X)$ denotes the set of all of its nonempty closed subsets. We endow $\cF(X)$  with the \emph{Effros Borel structure}, i.e., the $\sigma$-algebra 
generated by
\[\{F\subseteq X \mid F\cap U\neq \emptyset\},\]
where $U$ varies among the open sets of $X$. It can be shown that $\cF(X)$ with the Effros Borel structure is a standard Borel space (\cite[Theorem 12.6]{KechrisBook1995}).  The following well-known lemma (see 
\cite[Theorem 12.13]{KechrisBook1995}) will be crucial in some of our proofs.

\begin{lemma}[Kuratowski-Ryll-Nardzewski] 
Let $X$ be a Polish space. There exists a sequence of Borel functions $(S_n)_{n\in\N}\colon\cF(X)\to X$ such that $\{S_n(F)\}_{n\in\N}$ is dense in $F$, for all closed $F\subseteq X$. Those functions are called Borel selectors.\label{LemmaKuratowski-Ryll-Nardzewski}
\end{lemma}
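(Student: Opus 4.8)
The statement is the Kuratowski--Ryll-Nardzewski selection theorem, so the plan is to reconstruct the classical argument in two stages: first produce a single Borel selector $S\colon\cF(X)\to X$ with $S(F)\in F$, and then relativize it to a countable basis of $X$ to obtain a dense family. Fix once and for all a complete compatible metric $d$ on $X$ with $d\le 1$, a countable dense set $\{x_n\}_n\subseteq X$, and an enumeration $\{V_m\}_m$ of the basic open balls $B(x_n,q)$ with $q\in\Q^+$. The only input from the Effros structure I will use is that for every open $W\subseteq X$ the set $\{F\in\cF(X)\mid F\cap W\neq\emptyset\}$ is Borel; this is exactly a generator of the defining $\sigma$-algebra, and it makes every condition of the form ``$F$ meets a prescribed ball'' Borel in $F$.

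For the single selector I would build, for each $F$, a Cauchy sequence of centers by recursion. Start with any $x_{n_0}$ satisfying $F\cap B(x_{n_0},1)\neq\emptyset$, which exists since $F\neq\emptyset$; having chosen $x_{n_k}$ with $F\cap B(x_{n_k},2^{-k})\neq\emptyset$, let $n_{k+1}$ be the least index $m$ such that $d(x_m,x_{n_k})<2^{-k}$ and $F\cap B(x_m,2^{-(k+1)})\neq\emptyset$. Such an $m$ exists by density, since any point of $F\cap B(x_{n_k},2^{-k})$ is approximated arbitrarily well by points of $\{x_n\}_n$. The resulting sequence $(x_{n_k})_k$ is Cauchy by construction, so by completeness it converges to a point $S(F)$, and because $F$ meets $B(x_{n_k},2^{-k})$ with $2^{-k}\to 0$ we get $S(F)\in\overline F=F$. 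Each index $n_k$ is a Borel function of $F$ (a least index satisfying Borel conditions that depend only on finitely many previously chosen Borel functions of $F$), hence $S=\lim_k x_{n_k}$ is a Borel function.

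To upgrade this to a dense family I would, for each basic open $V_m$, construct a selector $d_{V_m}$ with $d_{V_m}(F)\in F$ always and $d_{V_m}(F)\in V_m$ whenever $F\cap V_m\neq\emptyset$. On the Borel set $\{F\mid F\cap V_m\neq\emptyset\}$ I run the same recursion, but now I choose the initial pair $(x_{n_0},r_0)$, least in a fixed enumeration of $\{x_n\}_n\times\Q^+$, subject to the fixed geometric condition $\overline B(x_{n_0},2r_0)\subseteq V_m$ together with $F\cap B(x_{n_0},r_0)\neq\emptyset$, and I force the $k$-th radius to be $r_0 2^{-k}$. This way the total displacement is $\sum_k r_0 2^{-k}<2r_0$, so every center stays in $B(x_{n_0},2r_0)$ and the limit lands in $\overline B(x_{n_0},2r_0)\subseteq V_m$, while still belonging to $F$ as before; off this set I set $d_{V_m}(F)=S(F)$. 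Finally I put $S_m=d_{V_m}$. Given a nonempty relatively open subset $U\cap F$ of $F$, I choose $p\in U\cap F$ and a basic $V_m$ with $p\in V_m\subseteq U$; then $F\cap V_m\neq\emptyset$, so $S_m(F)\in V_m\cap F\subseteq U\cap F$, which gives the required density.

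I expect the main obstacle to be the relativization step: ensuring that the limit of the Cauchy sequence lands \emph{strictly inside} the open set $V_m$ rather than on its boundary. This is what forces the bookkeeping with a fixed initial ball whose doubled closure sits inside $V_m$ and with geometrically shrinking radii, so that the accumulated displacement cannot escape the chosen neighborhood. The remaining delicate point, routine but relying on the Effros-structure fact above, is verifying that each recursively defined index $n_k(F)$ is Borel, which is what transfers measurability from the meeting-a-ball predicates to the limit functions $S_m$.
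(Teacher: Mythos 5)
Your argument is correct, but note that the paper does not prove this lemma at all: it is quoted verbatim from Kechris (Theorem 12.13 there), so there is no in-paper proof to compare against. What you have written is a sound, self-contained reconstruction of the classical Kuratowski--Ryll-Nardzewski argument. The single-selector recursion (least-index centers of geometrically shrinking balls meeting $F$, Cauchy by construction, limit in $F$ by closedness, Borel because each index is a least index satisfying Effros-Borel predicates) is exactly the standard proof. Your relativization differs slightly in mechanics from the textbook one: Kechris obtains the dense family by applying the single selector to the Borel set-valued assignment $F\mapsto\overline{F\cap U_n}$ (when nonempty), whereas you re-run the recursion with a fixed initial pair $(x_{n_0},r_0)$ satisfying $\overline{B}(x_{n_0},2r_0)\subseteq V_m$ and radii $r_02^{-k}$, so that the accumulated displacement $\sum_k r_02^{-k}<2r_0$ traps the limit inside $V_m$. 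Both routes work; yours trades the need to check Borel measurability of $F\mapsto\overline{F\cap U_n}$ for a second round of Cauchy bookkeeping, and your density argument at the end (pick $p\in U\cap F$ and a basic $V_m$ with $p\in V_m\subseteq U$) correctly closes the proof. The only points worth making fully explicit if you write this up are the existence of the least admissible index at each stage (which you justify by density, using that $d(y,x_{n_k})<2^{-k}$ is a strict inequality so there is room to approximate $y$) and that a countable-valued function whose level sets are Borel is Borel; neither is a gap.
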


\subsection{The standard coding for separable Banach spaces}\label{SubsectionSB}
In this subsection, we give a brief introduction to the standard coding for separable Banach spaces (see
\cite{Bossard2002} for a detailed treatment of it).

Letting  $\Delta$ denote the Cantor set, the Banach space $C(\Delta)$ of continuous real-valued functions on $\Delta$ is \emph{isometrically universal} for the class of all separable Banach spaces, i.e., every separable Banach space is isometrically isomorphic to a closed linear subspace of $C(\Delta)$ (see \cite[Page 79]{KechrisBook1995}). Therefore, the class of all separable Banach spaces can be coded as the set of closed subspaces of $C(\Delta)$ and we define
\[\text{SB}=\{X\subseteq C(\Delta)\mid  X \text{ is a closed vector subspace}\}.\]
As $C(\Delta)$ is   a Polish space, 
$\cF(C(\Delta))$  is a stardard Borel space and it can be shown that $\SB$ is a Borel set in $\cF(C(\Delta))$ and hence it is also a standard Borel space (see \cite[Theorem 2.2]{DodosBook2010}\footnote{Alternatively, see Section \ref{SFcoding}. We present a proof of this argument for the space of separable Fréchet spaces which is an analog of the argument for $\SB$.}). In fact, there is nothing special about $C(\Delta)$ here. Precisely, if $X$ is any separable Banach space, then 
\[\SB(X)=\left\{Y\subseteq X\mid Y\text{ is a closed vector subspace of }X\right\}\]
is a standard Borel space equipped with its Effros Borel structure. With this notation, $\SB=\SB(C(\Delta))$.

This coding for the class of separable Banach spaces allows us to  wonder if specific classes of separable Banach spaces are Borel, analytic, coanalytic, etc. We refer to   \cite{DodosBook2010} and \cite{Bossard2002}, for detailed expositions on the coding $\SB$, such as the computation of the complexity of many classes of separable Banach spaces as sets in this coding.  

\subsection{Separable Fréchet spaces}\label{SubsectionSepFrechet}

A completely metrizable locally convex topological vector space $X$ is called a \emph{Fréchet space} if its topology can be defined by an invariant metric. If $X$ is a Fréchet space, then there exists a sequence $(\|\cdot\|_n)_{n\in\N}$ of pseudonorms  generating  the topology of $X$, i.e., a sequence $(x_j)_j$ in $X$ converges to zero if and only if   $\lim_j\|x_j\|_n=0$ for all $n\in\N$.  Similarly, a sequence $(x_j)_j$ is Cauchy if and only if it is Cauchy with respect to $\|\cdot\|_n$ for all $n\in\N$. Moreover, we can always assume the sequence $(\|\cdot\|_n)_n$ to satisfy \[\|\cdot\|_n\leq\|\cdot\|_{n+1},\ \text{ for all }\ n\in\N.\]

Given a Fréchet space $X$ and an increasing sequence of pseudonorms $(\|\cdot\|_n)_n$ generating the topology of $X$, we define a distance $d\colon X^2\to[0,\infty)$ as
\[d(x,y)=\sum_{n\in\N}2^{-n}\frac{\|x-y\|_n}{1+\|x-y\|_n},\]
 for all $x,y\in X$. Then $d$ is a complete invariant metric generating the topology of $X$. 
 Define 
 \begin{align*}\|\cdot\|_F\colon X&\to [0,\infty)\\ 
 x&\mapsto d(x,0)
 \end{align*}
  We call $\|\cdot\|_F$ an \emph{F-norm} for $X$.\footnote{Notice that, despite the terminology and notation, $\|\cdot\|_F$ is not an actual norm.}

We finish this section presenting a few basic results in Fréchet theory which will be  crucial in what follows. Although the next lemma is completely elementary, we present its proof for lack of a precise citing source. 

\begin{lemma}\label{continuous}
Let $X$ and $Y$ be Fréchet spaces, and let $(\|\cdot\|_{X,n})_{n\in\N}$ and $(\|\cdot\|_{Y,n})_{n\in\N}$ be   increasing sequences of pseudonorms generating the  
 topologies of $X$ and $Y$, respectively. An operator $T\colon X\to Y$ is continuous if and only if for all $n\in\N$, there are $K>0$ and $m\in\N$, such that 
\[\|T(x)\|_{Y,n}\leq K\|x\|_{X,m},\]
 for all $x\in X$.
\end{lemma}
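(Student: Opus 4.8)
The plan is to prove the two implications separately, in each case reducing the linear map $T$ to its behavior at the origin and exploiting that Fréchet spaces are metrizable, so that continuity is equivalent to sequential continuity at $0$.

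For the ``if'' direction, suppose that for every $n\in\N$ there exist $K>0$ and $m\in\N$ with $\|T(x)\|_{Y,n}\le K\|x\|_{X,m}$ for all $x\in X$. Take any sequence $(x_j)_j$ in $X$ with $x_j\to 0$. By the characterization of convergence recalled above, $\|x_j\|_{X,m}\to 0$ for every $m$; hence for each fixed $n$ the inequality gives $\|T(x_j)\|_{Y,n}\le K\|x_j\|_{X,m}\to 0$, so $T(x_j)\to 0$ in $Y$. As $T$ is linear and $X,Y$ are metrizable, this sequential continuity at $0$ yields continuity of $T$.

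For the ``only if'' direction, assume $T$ is continuous and fix $n\in\N$. Since the pseudonorm $\|\cdot\|_{Y,n}$ is continuous on $Y$, the set $V=\{y\in Y:\|y\|_{Y,n}<1\}$ is an open neighborhood of $0$, so $T^{-1}(V)$ is an open neighborhood of $0$ in $X$. Because the topology of $X$ is generated by the pseudonorms $(\|\cdot\|_{X,m})_m$ and these are increasing, every neighborhood of $0$ contains a basic one of the form $\{x:\|x\|_{X,m}<\eps\}$ for a single index $m$ and some $\eps>0$ (a finite intersection of such balls collapses to the one of largest index). Thus there are $m\in\N$ and $\eps>0$ with $\|x\|_{X,m}<\eps\Rightarrow \|T(x)\|_{Y,n}<1$.

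The remaining step is a homogeneity/scaling argument upgrading this implication to the desired linear estimate. For $x$ with $\|x\|_{X,m}\ne 0$ and any $0<\eps'<\eps$, rescaling $x$ to $\tfrac{\eps'}{\|x\|_{X,m}}x$ lands inside the ball, and linearity of $T$ together with homogeneity of the pseudonorms gives $\|T(x)\|_{Y,n}\le (\eps')^{-1}\|x\|_{X,m}$; letting $\eps'\uparrow\eps$ yields $\|T(x)\|_{Y,n}\le \eps^{-1}\|x\|_{X,m}$. The only genuinely delicate case is the degenerate one $\|x\|_{X,m}=0$: here one applies the implication to $tx$ for all $t>0$ to force $\|T(x)\|_{Y,n}=0$, so the inequality still holds. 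Taking $K=\eps^{-1}$ completes the estimate for this $n$, and since $n$ was arbitrary, this finishes the proof. I expect the main (though minor) obstacle to be the bookkeeping — reducing to a single seminorm ball via monotonicity and disposing of the degenerate case — rather than any deep point.
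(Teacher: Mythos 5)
Your proof is correct, and the ``if'' direction coincides with the paper's. For the ``only if'' direction, however, you take a genuinely different route: the paper argues by contradiction, extracting from the failure of the estimate a diagonal sequence $(x_m)_m$ with $\|T(x_m)\|_{Y,n}\geq m\|x_m\|_{X,m}$ and observing that $x_m/(m\|x_m\|_{X,m})\to 0$ while its image does not; you instead argue directly, pulling back the unit $\|\cdot\|_{Y,n}$-ball to a neighborhood of $0$, using monotonicity of the pseudonorms to reduce to a single basic ball $\{x:\|x\|_{X,m}<\eps\}$, and then upgrading the resulting implication to the linear estimate by homogeneity. Your version is slightly longer but has two advantages: it is the standard ``continuity at $0$ plus scaling'' argument that generalizes verbatim to arbitrary (not necessarily metrizable) locally convex spaces with directed families of seminorms, and it explicitly disposes of the degenerate case $\|x\|_{X,m}=0$ --- a case the paper's normalization $x_m/(m\|x_m\|_{X,m})$ silently assumes away (one would need to note there that $\|x_m\|_{X,m}>0$ can be arranged, e.g.\ because otherwise one could rescale $x_m$ to violate continuity directly). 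The paper's contradiction argument is shorter and leans on the sequential characterization of the topology, which is all that is needed in the metrizable setting. Both are complete modulo these routine points.
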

 
\begin{proof}
For the backwards implication, it is evident that if a sequence $(x_m)_m$ in $X$ goes to zero with respect to all pseudonorms  $(\|\cdot\|_{X,n})_{n\in\N}$, then the equation in the lemma implies that its image under $T$ also goes to zero in $Y$.  For the forward implication, suppose $T$ is continuous and that the desired conclusion fails. Then, we can pick $n\in\N$ and a sequence $(x_m)_m$ in $X$ such that 
\begin{equation}\label{Eq20125.1}\|T(x_m)\|_{Y,n}\geq m\|x_m\|_{X,m}
\end{equation}
for all $m\in\N$. As $(\|\cdot\|_{X,m})_m$ is an increasing sequence,  $(x_m/(m\|x_m\|_{X,m}))_m$ converges to zero with respect to all these pseudonorms, so, it converges to zero in $X$. As $T$ is continuous, its image under $T$ should also converge to zero in $Y$, however, it does not by \eqref{Eq20125.1}.\end{proof} 

Given two Banach spaces $X$ and $Y$, we say that two sequences $(x_j)_j\in X^\N$ and $(y_j)_j\in X^\N$   are \emph{equivalent}, and write $(x_j)_j\sim(y_j)_j$, if there is $K>0$ such that  
\[\frac{1}{K}\left\|\sum_{i=1}^\ell a_iy_i\right\|\leq\left\|\sum_{i=1}^\ell a_ix_i\right\|\leq K\left\|\sum_{i=1}^\ell a_iy_i\right\|,\]
 for all $\ell\in\N$, and all $a_1,...,a_\ell\in\R$. Similarly, given   Fréchet spaces $X$ and $Y$ whose topologies are generated by   increasing sequences of pseudonorms $(\|\cdot\|_{X,n})_{n\in\N}$ and $(\|\cdot\|_{Y,n})_{n\in\N}$, we say that two sequences $(x_j)_j\in X^\N$ and $(y_j)_j\in X^\N$  are \emph{equivalent}, and  write $(x_j)_j\sim(y_j)_j$, if for all $n\in\N$, there are $K>0$ and $m\in\N$, such that
\[\left\|\sum_{i=1}^\ell a_ix_i\right\|_{X,n}\leq K\left\|\sum_{i=1}^\ell a_iy_i\right\|_{Y,m},\ \text{ and }\ \left\|\sum_{i=1}^\ell a_iy_i\right\|_{Y,n}\leq K\left\|\sum_{i=1}^\ell a_ix_i\right\|_{X,m},\]
 for all $\ell\in\N$, and all $a_1,...,a_\ell\in\R$. We write $X \cong Y$ to mean that the Fréchet spaces $X$ and $Y$ are linearly isomorphic. The proof of the next lemma is elementary and we omit it.

\begin{lemma}\label{basic}
Let $X$ and $Y$ be  separable Fréchet spaces, and ${(\|\cdot\|_{X,n})_{n\in\N}}$ and $(\|\cdot\|_{Y,n})_{n\in\N}$ be  increasing sequences of pseudonorms generating the topologies of $X$ and $Y$, respectively. Then $X\cong Y$ if and only if there exist $(x_j)_j\in X^\N$ and $(y_j)_j\in Y^\N$, such that $(x_j)_j\sim(y_j)_j$,  $\overline{\text{span}}\{x_j\mid j\in\N\}=X$, and $\overline{\text{span}}\{y_j\mid j\in\N\}=Y$.
\end{lemma}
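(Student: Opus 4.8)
The plan is to prove both implications by leaning on Lemma \ref{continuous}, which converts continuity of a linear operator between Fréchet spaces into exactly the family of pseudonorm inequalities appearing in the definition of equivalence of sequences.

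For the forward implication, suppose $T\colon X\to Y$ is a linear isomorphism. Using separability of $X$, I would fix a sequence $(x_j)_j$ in $X$ whose closed linear span is all of $X$ (for instance a countable dense subset indexed by $\N$) and set $y_j=T(x_j)$. Since $T$ is a homeomorphism, it carries $\overline{\mathrm{span}}\{x_j\}=X$ onto $\overline{\mathrm{span}}\{y_j\}$, so the latter equals $Y$. To see $(x_j)_j\sim(y_j)_j$, I would apply Lemma \ref{continuous} to $T$: for every $n$ there are $K>0$ and $m$ with $\|T(x)\|_{Y,n}\le K\|x\|_{X,m}$, and evaluating at $x=\sum_{i=1}^\ell a_ix_i$ yields the first inequality in the definition of equivalence. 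Applying the lemma to the continuous inverse $T^{-1}$ gives the second inequality, so $(x_j)_j\sim(y_j)_j$.

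For the backward implication, the idea is to reconstruct the isomorphism from the two spanning sequences. I would define $T$ on $D:=\mathrm{span}\{x_j\mid j\in\N\}$ by $T(\sum_i a_ix_i)=\sum_i a_iy_i$. The first point to verify — and the main subtlety — is that $T$ is well defined: if $\sum_i a_ix_i=0$, then the equivalence gives $\|\sum_i a_iy_i\|_{Y,n}\le K\|\sum_i a_ix_i\|_{X,m}=0$ for every $n$, whence $\sum_i a_iy_i=0$ because the generating pseudonorms separate points. The very same inequalities show, through Lemma \ref{continuous}, that the linear map $T\colon D\to Y$ is continuous, hence uniformly continuous; as $D$ is dense in $X$ and $Y$ is complete, $T$ extends uniquely to a continuous linear map $\overline T\colon X\to Y$.

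Symmetrically, the other half of the equivalence lets me define a continuous linear map $S\colon\mathrm{span}\{y_j\mid j\in\N\}\to X$ by $\sum_i a_iy_i\mapsto\sum_i a_ix_i$ and extend it to $\overline S\colon Y\to X$. Finally I would check that $\overline S\circ\overline T=\mathrm{id}_X$ and $\overline T\circ\overline S=\mathrm{id}_Y$: each identity holds on the relevant dense span (since $\overline S(\overline T x_j)=x_j$ and $\overline T(\overline S y_j)=y_j$) and therefore everywhere by continuity. Thus $\overline T$ is a topological linear isomorphism and $X\cong Y$. I expect the only genuine obstacle to be the well-definedness step together with the density-plus-completeness bookkeeping needed to extend and invert $\overline T$ in the backward direction; the forward direction is a direct application of Lemma \ref{continuous}.
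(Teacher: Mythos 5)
Your proof is correct and is precisely the standard argument the authors had in mind when they wrote ``the proof of the next lemma is elementary and we omit it'': the forward direction via Lemma \ref{continuous} applied to $T$ and $T^{-1}$, and the backward direction via well-definedness on the span, continuous extension by density and completeness, and the two compositions being the identity on dense subspaces. The only bookkeeping point worth noting is that in the forward direction the two applications of Lemma \ref{continuous} may produce different pairs $(K,m)$ for a given $n$, but since the pseudonorm sequences are increasing one passes to the maximum of each to get a single pair as required by the definition of $\sim$.
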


Let $X$ be a Fréchet space. A subset $K\subseteq X$ is called \emph{bounded} if for all neighborhoods of zero $U$ there exists $a\in \R$ such that $K\subseteq aU$, where \[aU=\{au\in X\mid u\in U\}.\] The following is a well-known characterization of when Fréchet spaces are mere Banach spaces in disguise (see \cite[Theorem 3.2.2]{RolewiczSecondEdition1984}).

\begin{thm}\label{ThmFréchetIsBanachIFFBoundedN}
A  Fréchet space $X$ is a Banach space, i.e., there is a norm $\|\cdot\|$ generating its topology, if and only if $X$ has a bounded neighborhood of zero.
\end{thm}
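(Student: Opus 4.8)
The plan is to prove the two implications separately, with essentially all of the work concentrated in the backward direction. The forward implication is immediate: if $\|\cdot\|$ is a norm generating the topology of $X$, then the open unit ball $B=\{x\in X\mid \|x\|<1\}$ is a neighborhood of zero, and it is bounded. Indeed, given any neighborhood $U$ of zero, there is $\eps>0$ with $\{x\mid \|x\|<\eps\}\subseteq U$; choosing $a>1/\eps$ we get $a^{-1}B\subseteq\{x\mid \|x\|<\eps\}\subseteq U$, i.e., $B\subseteq aU$. Hence $B$ witnesses that $X$ has a bounded neighborhood of zero.

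For the backward implication, suppose $X$ has a bounded neighborhood $U$ of zero. Since $X$ is locally convex, I would first pass to a smaller neighborhood $V\subseteq U$ which is convex, balanced, and open; as a subset of the bounded set $U$, the set $V$ is again bounded. Then I would define the Minkowski functional (gauge) of $V$,
\[\|x\|=\inf\{t>0\mid x\in tV\},\]
and recall the standard fact that, because $V$ is convex, balanced, and open, $\|\cdot\|$ is a seminorm whose open unit ball is exactly $V$, and more generally $\{x\mid \|x\|<t\}=tV$ for every $t>0$. The claim is that this seminorm is in fact a norm generating the original topology.

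The heart of the argument is to translate boundedness of $V$ into the statement that the seminorm topology agrees with the original one. On the one hand, each $tV$ is a neighborhood of zero in the original topology (scaling is a homeomorphism) and equals a seminorm ball, so every seminorm-open ball is original-open and the seminorm topology is coarser than the original topology. On the other hand, boundedness of $V$ says precisely that for every neighborhood $W$ of zero there is $a\in\R$ with $V\subseteq aW$, hence $a^{-1}V\subseteq W$; this exhibits a seminorm ball inside $W$ and shows the original topology is coarser than the seminorm topology. Thus the two topologies coincide. I expect this equivalence to be the main obstacle, since it requires using the definition of boundedness in exactly the right direction and matching it against the neighborhood base $\{tV\}_{t>0}$.

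Finally, I would check that $\|\cdot\|$ separates points, so that it is genuinely a norm. If $\|x\|=0$ then $x\in\bigcap_{t>0}tV$; but if $x\neq0$, then since a Fréchet space is Hausdorff there is a neighborhood $W$ of zero with $x\notin W$, and boundedness gives $a^{-1}V\subseteq W$ for some $a>0$, forcing $x\notin a^{-1}V$ and hence $\|x\|\geq a^{-1}>0$, a contradiction. Therefore $\|x\|=0$ implies $x=0$. Since $X$ is complete as a Fréchet space and the norm $\|\cdot\|$ generates its (complete) topology, $X$ is a Banach space, completing the proof.
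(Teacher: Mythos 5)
Your proof is correct. Note that the paper does not prove this statement at all: it is quoted as a known result with a citation to Rolewicz (Theorem 3.2.2 there), and what you have written is precisely the classical Kolmogorov normability criterion argument---pass to a convex balanced bounded neighborhood, take its Minkowski gauge, and use boundedness to show the gauge topology refines the original one---so your argument coincides with the standard proof behind the citation. The only step you gloss over is the final transfer of completeness from the $F$-norm metric to the norm metric; this is fine, but it deserves the one-line justification that both metrics are invariant, and for invariant metrics Cauchy sequences are determined by the topology ($(x_n)$ is Cauchy iff $x_n - x_m \to 0$ as $n,m\to\infty$), so $(X,\|\cdot\|)$ inherits completeness from the Fréchet structure.
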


\section{The standard Borel space of separable Fréchet spaces}\label{SFcoding}

Let us denote by $C(\R)$ the space of continuous real valued functions on $\R$ endowed with the topology of uniform convergence on compact subsets of $\R$. So,   $C(\R)$  is a separable Fréchet space and the sequence of pseudonorms $(\|\cdot\|_n)_n$ given by 
\begin{equation}\label{Eq.Standard.PseudoNormCR}\|f\|_n=\underset{x\in[-n,n]}{\sup}|f(x)|,\end{equation}
for all $n\in\N$ and all $f\in C(\R)$, 
 generates the topology of $C(\R)$. The sequence $(\|\cdot\|_n)_n$ above will be called the \emph{standard sequence of pseudonorms on $C(\R)$}. If we let 
\[\|f\|_{F}=\sum_{n\in\N}2^{-n}\frac{\|f\|_n}{1+\|f\|_n},\]
 $\|\cdot\|_F$ is an $F$-norm for $C(\R)$. From now on, $\|\cdot\|_F$ will always refer to this $F$-norm on $C(\R)$.


It is well known (see \cite{MazurOrlicz1948Studia} or \cite[page 101]{RolewiczSecondEdition1984}) that  $C(\R)$ is \emph{isomorphically  universal} for all separable Fréchet spaces, i.e., every separable Fréchet space can be isomorphically embedded in $C(\R)$. Therefore, a natural coding for the class of separable Fréchet spaces is 
\[\SF=\{X\subseteq C(\R)\mid X\text{ is a closed vector subspace}\}.\]
 The natural Borel structure to endow $\SF$ with is the Effros Borel structure defined earlier, i.e., the smallest $\sigma$-algebra containing 
\[\{X\in \SF\mid X\cap U\neq\emptyset\},\]
 where $U$ varies among all the open sets of $C(\R)$. The coding $\SF$ with the Effros Borel structure is a standard Borel space. Indeed, the same proof that $\SB$ is a standard Borel space works for $\SF$, i.e., let  $(U_n)_n$ be a countable basis for the topology of $C(\R)$, then
\begin{align*}
X  \in  \SF &  \ \\
\Leftrightarrow \  
&\forall n \big(0\in U_n\Rightarrow X\cap U_n\neq\emptyset\big)\ \text{ and }\ \forall \ell,m,k\in\N,\forall r,t\in\Q\\ 
&\big(rU_\ell+tU_m\subseteq U_k\ \&\ X\cap U_\ell\neq\emptyset\ \&\ X\cap U_m\neq\emptyset\big)\Rightarrow \big(X\cap U_k\neq\emptyset\big).
\end{align*}

\begin{remark}
    Of course this is not the only way of coding the class of separable Fréchet spaces. In the appendix of these notes we discuss other ways of coding separable Fréchet spaces and, following \cite[Proposition 2.8]{Bossard2002}, we explain why any ``reasonable enough coding''  will provide us with the same theory.  
\end{remark}

As every Banach space is a Fréchet space, we can see the separable Banach spaces as a subset of $\SF$. We say $X\in\SF$ is \emph{normable} if there exists a norm $\|\cdot\|$ on $X$ generating its topology, i.e., such that $\mathrm{Id}\colon (X,\|\cdot\|)\to (X,\|\cdot\|_F)$ is an isomorphism. We define $\SB_{\rm F}$ as 
\[\SB_{\rm F}=\{X\in \SF\mid X\text{ is normable}\}.\]

\begin{remark}
     It is important to notice that, unlike the Banach spaces case, it does not make sense to talk about isometrically universal Fréchet spaces or isometric properties of Fréchet spaces. Indeed, Fréchet spaces are locally convex topological vector spaces which admit a complete and invariant metric. But they are not metric spaces per se. 
\end{remark}

\subsection{Relation between the complexity of sets in $\SB$ and $\SF$}\label{SubsectionSBandSF.OK}

As every Banach space is a Fréchet space, this gives us two ways of coding a given (isomorphically invariant) class of separable Banach spaces, (i) as a subset of $\SB$ and (ii) as a subset of $\SF$. It is natural to expect that the complexity of such classes would be independent of this choice. In this subsection, we show that this is indeed the case (Theorem \ref{consistent}). Therefore, this new theory  is consistent with the already known theory for separable Banach spaces.

Let $(\|\cdot\|)_n$ be the standard sequence of pseudonorms generating the topology of $C(\R)$ (see \eqref{Eq.Standard.PseudoNormCR}). For each  $n\in\N$, $r\in \Q^+$, and $X\in \SF$, we let
\[B_{n,r}=\{f\in C(\R)\mid\|f\|_n < r\}\ \text{ and }\  B_{n,r}(X)=B_{n,r}\cap X.\] 
It is easy to see that $\{B_{n,r}(X)\}_{n,r}$ is a basis for the topology of $X$. We also let 
\[\overline{B}_{n,r}=\overline{B_{n,r}}\ \text{ and }\ \overline{B}_{n,r}(X)=\overline{B_{n,r}(X)}
.\]

\begin{lemma}\label{bolas}
Let $n\in\N$ and $r\in \R^+$. The map \[X\in \SF\mapsto \overline{B}_{n,r}(X) \in \cF(C(\R))\] is Borel.
\end{lemma}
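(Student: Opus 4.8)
The plan is to verify Borelness directly from the definition of the Effros Borel structure on $\cF(C(\R))$. Recall that this $\sigma$-algebra is generated by the sets $\{F\mid F\cap U\neq\emptyset\}$ as $U$ ranges over the open subsets of $C(\R)$; since $C(\R)$ is second countable and $\{F\mid F\cap U\neq\emptyset\}=\bigcup_m\{F\mid F\cap U_m\neq\emptyset\}$ whenever $U=\bigcup_m U_m$, it is in fact generated by these sets with $U$ ranging over a fixed countable basis. Consequently, to prove that the map $\Phi\colon X\mapsto\overline{B}_{n,r}(X)$ is Borel, it suffices to show that for every open $U\subseteq C(\R)$ the preimage $\{X\in\SF\mid \overline{B}_{n,r}(X)\cap U\neq\emptyset\}$ is Borel in $\SF$. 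I would first note that $\Phi$ is well defined, i.e.\ that $\overline{B}_{n,r}(X)$ is a nonempty closed set lying in $\cF(C(\R))$, because $\|0\|_n=0<r$ forces $0\in B_{n,r}(X)$ for every $X\in\SF$.

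The key observation is the elementary topological fact that closure does not affect whether a set meets a fixed open set: for any open $U$ one has $\overline{A}\cap U\neq\emptyset$ if and only if $A\cap U\neq\emptyset$. Applying this with $A=B_{n,r}(X)$ gives
\[
\overline{B}_{n,r}(X)\cap U\neq\emptyset\iff B_{n,r}(X)\cap U\neq\emptyset.
\]
Now $\|\cdot\|_n$ is one of the continuous generating pseudonorms of $C(\R)$, so $B_{n,r}=\{f\in C(\R)\mid \|f\|_n<r\}$ is open (here nothing changes when $r\in\R^+$ rather than $\Q^+$), and hence so is $V:=B_{n,r}\cap U$. Since $B_{n,r}(X)=X\cap B_{n,r}$ by definition, we get $B_{n,r}(X)\cap U=X\cap V$, whence
\[
\{X\in\SF\mid \overline{B}_{n,r}(X)\cap U\neq\emptyset\}=\{X\in\SF\mid X\cap V\neq\emptyset\}=\SF\cap\{F\in\cF(C(\R))\mid F\cap V\neq\emptyset\}.
\]
The set on the right is the intersection of $\SF$ with a generator of the Effros Borel structure on $\cF(C(\R))$, and is therefore Borel in $\SF$ with its inherited structure. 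This completes the verification for each basic open $U$ and thus proves that $\Phi$ is Borel.

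I do not anticipate a genuine obstacle here: the statement is essentially a bookkeeping consequence of the definition of the Effros structure once one isolates the fact that closure commutes with ``meets a fixed open set'', which is precisely what lets one replace the closed ball $\overline{B}_{n,r}(X)$ by the open ball $B_{n,r}(X)$ and then recognize the resulting condition as a generator of the Effros $\sigma$-algebra. The only points requiring minor care are (i) checking that $\Phi$ takes values in \emph{nonempty} closed sets, so that it indeed lands in $\cF(C(\R))$, and (ii) confirming that reduction to a countable basis of open sets is legitimate; both are immediate from the remarks above.
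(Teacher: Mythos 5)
Your proof is correct and is essentially the paper's own argument: the paper's one-line proof rests on exactly the identity $\overline{B}_{n,r}(X)\cap U\neq\emptyset\iff X\cap B_{n,r}\cap U\neq\emptyset$, which you derive explicitly from the fact that a closure meets an open set if and only if the set itself does. The extra care you take (well-definedness of the map into $\cF(C(\R))$ and the sufficiency of checking preimages of generators of the Effros $\sigma$-algebra) is left implicit in the paper but is the same reasoning.
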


\begin{proof}
Call this function $\varphi$. We only need to notice that, for an open set $U\subseteq C(\R)$, we have
\[\varphi^{-1}\big(\{F\in \cF(C(\R))\mid F\cap U\neq\emptyset\}\big)=\{X\in\SF\mid  X\cap U\cap B_{n,r} \neq\emptyset\}.\]
 As the right-hand side of the equality above is Borel, the lemma follows. 
\end{proof}

\begin{prop}\label{SBborelSF}
$\SB_{\rm F}$ is Borel in $\SF$.
\end{prop}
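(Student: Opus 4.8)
The plan is to characterize normability of $X \in \SF$ in terms of the boundedness of some basic neighborhood of zero, and then to express this characterization using countably many Borel conditions. By Theorem \ref{ThmFréchetIsBanachIFFBoundedN}, a Fréchet space $X$ is normable if and only if it has a bounded neighborhood of zero. Since $\{B_{m,s}(X)\}_{m,s}$ is a basis for the topology of $X$ at zero, $X$ is normable if and only if there exist $m \in \N$ and $s \in \Q^+$ such that $B_{m,s}(X)$ is bounded in $X$. So the first step is to reduce the proposition to showing that, for each fixed $m$ and $s$, the set $\{X \in \SF \mid B_{m,s}(X) \text{ is bounded}\}$ is Borel; since there are only countably many pairs $(m,s)$, the union over all of them is then Borel as well.

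Next I would unwind the definition of boundedness. The set $B_{m,s}(X)$ is bounded in $X$ if and only if for every $n \in \N$ there is some $t \in \R^+$ with $B_{m,s}(X) \subseteq t\, B_{n,1}(X)$, which (using the homogeneity of the pseudonorms) amounts to saying that $\sup\{\|f\|_n \mid f \in B_{m,s}(X)\} < \infty$ for every $n$. Because the pseudonorm $\|\cdot\|_n$ is continuous, this supremum is unchanged if we replace $B_{m,s}(X)$ by its closure $\overline{B}_{m,s}(X)$. The advantage of passing to the closure is precisely that Lemma \ref{bolas} tells us the map $X \mapsto \overline{B}_{m,s}(X) \in \cF(C(\R))$ is Borel, so I can now work with a genuine Borel-measurable assignment of closed sets to each $X$.

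To finish, I would show that the ``$\sup$ is finite'' condition is Borel as a function of the closed set $\overline{B}_{m,s}(X)$. Fix a sequence of Borel selectors $(S_k)_k$ for $\cF(C(\R))$ given by the Kuratowski--Ryll-Nardzewski lemma (Lemma \ref{LemmaKuratowski-Ryll-Nardzewski}), so that $\{S_k(\overline{B}_{m,s}(X))\}_k$ is dense in $\overline{B}_{m,s}(X)$. Since each $\|\cdot\|_n$ is continuous, $\sup_{f \in \overline{B}_{m,s}(X)} \|f\|_n = \sup_k \|S_k(\overline{B}_{m,s}(X))\|_n$, and hence
\[
B_{m,s}(X) \text{ is bounded} \iff \forall n \in \N\ \exists N \in \N\ \forall k \in \N\ \big\|S_k(\overline{B}_{m,s}(X))\big\|_n \leq N.
\]
Each inner condition $\|S_k(\overline{B}_{m,s}(X))\|_n \leq N$ is Borel in $X$, being the composition of the Borel map $X \mapsto \overline{B}_{m,s}(X)$, the Borel selector $S_k$, and the continuous evaluation $g \mapsto \|g\|_n$; the quantifiers over $\N$ are countable, so the whole expression is Borel. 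Taking the countable union over $(m,s)$ then gives that $\SB_{\rm F}$ is Borel. The main point requiring care is the passage to the closure together with the density argument that lets the countably many Borel selectors compute the supremum; everything else is routine bookkeeping with countable Boolean combinations of Borel conditions.
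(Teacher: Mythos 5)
Your proof is correct and follows essentially the same route as the paper: both reduce normability to boundedness of a basic ball via Theorem \ref{ThmFréchetIsBanachIFFBoundedN}, pass to closures so that Lemma \ref{bolas} applies, and finish with countable quantification. The only difference is the final bookkeeping step --- the paper expresses boundedness as a countable family of inclusions $\overline{B}_{n,q}(X)\subseteq \overline{B}_{m,p}(X)$ and cites the Borelness of the inclusion relation on $\cF(C(\R))^2$, whereas you compute the suprema of the pseudonorms via Kuratowski--Ryll-Nardzewski selectors, which amounts to inlining the standard proof of that same fact.
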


\begin{proof}
By Theorem \ref{ThmFréchetIsBanachIFFBoundedN}, a  Fréchet space is normable (i.e., Banach) if and only if there exists a bounded neighborhood of zero. Therefore,
\begin{align*}
X\in\SB_{\rm F}\ &\Leftrightarrow \ \exists q\in\Q^+,\ \exists n\in\N,\ 
\ \forall m\in\N,\ \exists p\in\Q^+\\
&\ \ \ \ \ \overline{B}_{n,q}(X)\subseteq \overline{B}_{m,p}(X).
\end{align*} 
In order to conclude that $\SB_{\rm F}$ is Borel notice that, for all standard Borel spaces $M$, the set $\{(F_1,F_2)\in\cF(M)^2\mid F_1\subseteq F_2\}$ is Borel (see \cite[Exercise 12.11.ii]{KechrisBook1995}). Since, 
by Lemma \ref{bolas}, the assignment 
\[X\in \SF\mapsto (\overline{B}_{n,q}(X),\overline{B}_{m,p}(X))\in \cF(C(\R))^2\] is Borel,  we conclude that    $\SB_{\rm F}$ is Borel.
\end{proof}

By a \emph{class $\cP$ of separable Banach spaces} (resp., \emph{separable Fréchet spaces}) we shall always mean a class which is closed under isomorphisms, i.e., if $X\cong Y$, and $X\in \cP$, then $Y\in\cP$. The following  shows that the coding $\SF$ for separable Fréchet spaces is ``consistent'' with the coding for separable Banach spaces $\SB$.

\begin{thm}\label{consistent}
There exists a Borel injection $\Psi\colon \SB\to \SF$ such that \[\Psi(X)\cong X\ \text{ for all }\ X\in\SB.\] In particular, if $\cP$ is a class of separable Banach spaces, and $\rm P$ and $\rm P_{\rm F}$ are its coding in $\SB$ and $\SF$, respectively, i.e., \[\rm P=\{X\in\SB\mid X\in\cP\}\ \text{  and }\  \rm P_{\rm F}=\{X\in \SF\mid X\in\cP\},\] then the complexity of $\rm P_{\rm F}$ in $\SF$ is the same as the  complexity of $\rm P$ in $\SB$.
\end{thm}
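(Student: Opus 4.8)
The plan is to construct the Borel injection $\Psi\colon \SB\to\SF$ explicitly, using the fact that $C(\Delta)$ embeds isometrically (and hence isomorphically) into $C(\R)$ as a Banach subspace. First I would fix such an embedding $\iota\colon C(\Delta)\to C(\R)$; the cleanest choice is to realize $\Delta$ as a compact subset of $\R$ (say $\Delta\subseteq[0,1]$) and use a Borel-definable extension operator, or simply precompose with a continuous surjection. The key point is that this single fixed linear isomorphism onto its image sends each $X\in\SB$, i.e.\ each closed subspace of $C(\Delta)$, to a closed vector subspace $\iota(X)$ of $C(\R)$, which is an element of $\SF$. Setting $\Psi(X)=\iota(X)$ gives a map whose values are automatically isomorphic to $X$, since $\iota$ is an isomorphism onto its image, so the defining property $\Psi(X)\cong X$ holds by construction.

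The main work is then to verify that $\Psi$ is Borel and injective. For injectivity, I would observe that $\iota$ being injective forces $\iota(X)=\iota(X')$ to imply $X=X'$, so $\Psi$ inherits injectivity from $\iota$. For Borelness, I would check the generators of the Effros Borel structure on $\SF$: for an open set $U\subseteq C(\R)$, the preimage $\Psi^{-1}(\{Y\in\SF\mid Y\cap U\neq\emptyset\})=\{X\in\SB\mid \iota(X)\cap U\neq\emptyset\}$. Since $\iota$ is a fixed continuous injection, $\iota(X)\cap U\neq\emptyset$ is equivalent to $X\cap \iota^{-1}(U)\neq\emptyset$, and $\iota^{-1}(U)$ is open in $C(\Delta)$, so this preimage is exactly a generator of the Effros Borel structure on $\SB$ and is therefore Borel. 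A technical subtlety to handle carefully is that the image $\iota(X)$ must genuinely be \emph{closed} in $C(\R)$; this holds because $\iota$ is an isomorphism onto a \emph{closed} subspace $\iota(C(\Delta))$ of $C(\R)$ (the image of a Banach space under an isomorphic embedding into a Fréchet space is complete, hence closed), and closed subspaces of $X$ map to closed subspaces of $\iota(X)$ which are closed in the closed set $\iota(C(\Delta))$.

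For the ``in particular'' clause, I would argue in two directions using $\Psi$ together with the fact that $\SB_{\rm F}$ is Borel in $\SF$ (Proposition \ref{SBborelSF}). Since $\Psi(X)\cong X$ and $\cP$ is isomorphism-invariant, we have $\Psi^{-1}(\rm P_{\rm F})=\rm P$; as $\Psi$ is Borel, this immediately shows that $\rm P$ is no more complex than $\rm P_{\rm F}$ (any Borel, analytic, or coanalytic set pulls back to a set of the same type). For the reverse inequality, I would use that $\rm P_{\rm F}=\rm P_{\rm F}\cap\SB_{\rm F}$, since a space satisfying an isomorphic Banach-space property is in particular normable, and every normable space in $\SF$ is isomorphic to a genuine separable Banach space. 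The point is to produce a Borel map $\Phi\colon\SB_{\rm F}\to\SB$ with $\Phi(Y)\cong Y$, reducing $\rm P_{\rm F}\cap\SB_{\rm F}$ to $\rm P$; this can be done using the Kuratowski--Ryll-Nardzewski selectors (Lemma \ref{LemmaKuratowski-Ryll-Nardzewski}) to Borel-select a dense sequence in each $Y\in\SB_{\rm F}$ and then record the closed linear span of its image under a fixed embedding of $C(\R)$ data back into $C(\Delta)$.

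The hard part will be the reverse reduction $\Phi$ on $\SB_{\rm F}$: constructing in a Borel way, from a normable $Y\in\SF$, an honest element of $\SB$ isomorphic to it, since the norm generating the topology of $Y$ and the associated embedding into $C(\Delta)$ must both be chosen Borel-measurably in $Y$. I expect the selectors of Lemma \ref{LemmaKuratowski-Ryll-Nardzewski} together with the quantitative normability criterion used in Proposition \ref{SBborelSF} (which provides, for each $Y$, witnessing indices $n,q,m,p$ in a Borel fashion) to furnish the required comparison of the $F$-norm with a genuine norm, and hence a Borel choice of embedding. Everything else reduces to routine checking against the generators of the Effros Borel structures.
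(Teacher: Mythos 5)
Your construction of $\Psi$ is essentially the paper's: a fixed linear isomorphic embedding of $C(\Delta)$ onto a closed subspace of $C(\R)$ (the paper routes through the Milutin isomorphism $C(\Delta)\cong C[0,1]$ and then extends affinely to $C_F[0,1]\subseteq C(\R)$, but extending directly from $\Delta\subseteq[0,1]$ works just as well), and your verification of Borelness via $\Psi^{-1}(\{Y\mid Y\cap U\neq\emptyset\})=\{X\mid X\cap\iota^{-1}(U)\neq\emptyset\}$, together with the closedness of $\iota(X)$, is correct. Your argument that $\rm P=\Psi^{-1}(\rm P_{\rm F})$, giving that $\rm P$ is at most as complex as $\rm P_{\rm F}$, also matches the paper (which writes $\rm P=\Psi^{-1}(\rm P_{\rm F}\cap\SB(C_F[0,1]))$).

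The gap is in the other direction, where you need to show that $\rm P_{\rm F}$ is at most as complex as $\rm P$. Your plan hinges on a Borel map $\Phi\colon\SB_{\rm F}\to\SB$ with $\Phi(Y)\cong Y$, and this is precisely the step you do not carry out. Selecting witnessing indices $n,q$ for normability in a Borel fashion is fine (they range over a countable set, so one can take the least), and this yields a norm $\|\cdot\|_n$ on $Y$ generating its topology; but you still must produce, Borel-uniformly in $Y$, an isomorphic embedding of the Banach space $(Y,\|\cdot\|_n)$ into $C(\Delta)$ and record its image as an element of $\SB$. That is a Borel parametrized version of the Banach--Mazur embedding theorem (select a dense sequence in $Y$, a norming sequence of functionals, etc.); it is true and is essentially what underlies the equivalence of codings discussed in the appendix, but it is a substantial lemma, not a routine check, and your proposal only asserts that you ``expect'' the selectors to furnish it. The paper avoids this entirely: since the isomorphism relation on $\SF^2$ is analytic (Lemma \ref{iso}) and $\cP$ is isomorphism-invariant, one has $\rm P_{\rm F}=\langle\Psi(\rm P)\rangle=\{X\in\SF\mid\exists Y\in\Psi(\rm P),\ Y\cong X\}$, and an existential quantifier over an analytic graph preserves $\Sigma^1_n$; the $\Pi^1_n$ case then follows by applying this to the complementary class and complementing inside the Borel set $\SB_{\rm F}$ (Proposition \ref{SBborelSF}). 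I would recommend either adopting that saturation argument or, if you want to keep your route, actually proving the Borel selection $\Phi$ rather than deferring it.
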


We will need a couple of  auxiliary lemmas to prove Theorem \ref{consistent}. Although those lemmas have analogous versions (and proofs) for Banach spaces, we present them here for the reader's convenience (for the Banach space case, see \cite{DodosBook2010}).

\begin{lemma}\label{phi}
The function $\Phi\colon C(\R)^\N\to\SF$ given by letting \[\Phi((f_j)_j)=\overline{\mathrm{span}}\{f_j\mid j\in\N\},\] for all $(f_j)_j\in C(\R)^\N$, is Borel.
\end{lemma}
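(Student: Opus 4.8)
The plan is to use the standard criterion for Borel maps into a hyperspace equipped with the Effros Borel structure: since the sets $\{F\in\cF(C(\R))\mid F\cap U\neq\emptyset\}$, with $U$ ranging over the open subsets of $C(\R)$, generate the Effros Borel $\sigma$-algebra, it suffices to show that for every open $U\subseteq C(\R)$ the preimage
\[\Phi^{-1}\big(\{F\mid F\cap U\neq\emptyset\}\big)=\{(f_j)_j\in C(\R)^\N\mid \overline{\mathrm{span}}\{f_j\mid j\in\N\}\cap U\neq\emptyset\}\]
is Borel in $C(\R)^\N$. Since $\SF$ is Borel in $\cF(C(\R))$ (as verified in the coding of $\SF$), it makes no difference whether we regard $\Phi$ as mapping into $\SF$ or into $\cF(C(\R))$.

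The key step is to rewrite the intersection condition in a countable, manifestly Borel way. First, because $U$ is open, the closed subspace $\overline{\mathrm{span}}\{f_j\}$ meets $U$ if and only if the (uncompleted) linear span $\mathrm{span}\{f_j\}$ meets $U$: any point of the closure lying in the open set $U$ is a limit of vectors in the span, and $U$, being a neighborhood of that point, must already contain one of them. Next, since scalar multiplication and addition are continuous and $\Q$ is dense in $\R$, any real linear combination lying in the open set $U$ can be approximated by a rational one still lying in $U$. Hence
\[\{(f_j)_j\mid \overline{\mathrm{span}}\{f_j\}\cap U\neq\emptyset\}=\bigcup_{\ell\in\N}\ \bigcup_{(q_1,\dots,q_\ell)\in\Q^\ell}\Big\{(f_j)_j\ \Big|\ \textstyle\sum_{i=1}^\ell q_if_i\in U\Big\}.\]

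It remains to observe that each set in this countable union is Borel — in fact open. For fixed $\ell$ and fixed rationals $q_1,\dots,q_\ell$, the evaluation map $(f_j)_j\mapsto\sum_{i=1}^\ell q_if_i$ depends only on the first $\ell$ coordinates and is continuous from $C(\R)^\N$ into $C(\R)$, so the preimage of the open set $U$ under it is open. A countable union of open sets is Borel, which yields the conclusion. I do not expect a genuine obstacle here: the only point requiring a little care is the passage from the closed span to finite rational combinations, which is a routine density argument, and the rest is bookkeeping confirming that the set operations involved stay within the Borel $\sigma$-algebra.
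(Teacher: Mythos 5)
Your proof is correct and follows essentially the same route as the paper: both reduce the claim to showing that the preimage of each basic Effros set $\{F \mid F\cap U\neq\emptyset\}$ equals the countable union over rational tuples $(q_1,\dots,q_\ell)\in\Q^{<\N}$ of the (open, hence Borel) sets where $\sum_{i=1}^\ell q_if_i\in U$. Your write-up merely makes explicit the density argument (closure versus span, real versus rational coefficients) that the paper leaves implicit in asserting that equality of sets.
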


\begin{proof}
Fix an open set $U\subseteq C(\R)$. For each $(a_1,...,a_n)\in\Q^{<\N}$, define a map $\varphi_{(a_1...,a_n)}\colon C(\R)^\N\to C(\R)$ by letting \[\varphi_{(a_1,...a_n)}((f_j)_j)=\sum_{i=1}^na_if_i,\] for all $(f_j)\in C(\R)^\N$. Then,
\begin{align*}
\Phi^{-1}(\{X\in\SF\mid & X\cap U\neq \emptyset\})\\
&=\{(f_j)_j\in C(\R)^\N\mid \overline{\text{span}}\left\{f_j\mid j\in\N\}\cap U\neq\emptyset\right\}\\
&=\bigcup_{(a_1,..,a_n)\in\Q^{<\N}}\varphi^{-1}_{(a_1,...,a_n)}(U).
\end{align*}
 As each $\varphi_{(a_1,...,a_n)}$ is clearly Borel, we are done.
\end{proof}

\begin{lemma}\label{LemmaEquivSeq}
The set \[\left\{((f_j)_j,(g_j)_j)\in C(\R)^\N\times C(\R)^\N\mid (f_j)_j\sim(g_j)_j\right\}\]
is Borel in $C(\R)^\N\times C(\R)^\N$.
\end{lemma}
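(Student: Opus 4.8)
The plan is to unwind the definition of equivalence for sequences in Fréchet spaces and express the set as a countable combination of Borel conditions, using that the pseudonorm evaluations depend on the sequences in a Borel (indeed continuous) way. Recall that $(f_j)_j \sim (g_j)_j$ means: for all $n \in \N$, there exist $K > 0$ and $m \in \N$ such that for all $\ell \in \N$ and all scalars $a_1, \dots, a_\ell \in \R$, both
\[
\Big\| \sum_{i=1}^\ell a_i f_i \Big\|_n \le K \Big\| \sum_{i=1}^\ell a_i g_i \Big\|_m
\quad \text{and} \quad
\Big\| \sum_{i=1}^\ell a_i g_i \Big\|_n \le K \Big\| \sum_{i=1}^\ell a_i f_i \Big\|_m,
\]
where $\|\cdot\|_n$ is the standard sequence of pseudonorms on $C(\R)$. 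The first thing I would check is that the universal quantifier over real scalars $a_1, \dots, a_\ell$ can be replaced by a quantifier over rational scalars. By continuity of each pseudonorm and density of $\Q$ in $\R$, each inequality holds for all real scalars if and only if it holds for all rational scalars; this reduces the scalar quantifier to a countable one.

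Next I would introduce, for each $n \in \N$, each $K \in \Q^+$, each $m \in \N$, and each finite rational tuple $(a_1, \dots, a_\ell) \in \Q^{<\N}$, the map
\[
((f_j)_j, (g_j)_j) \mapsto K \Big\| \sum_{i=1}^\ell a_i g_i \Big\|_m - \Big\| \sum_{i=1}^\ell a_i f_i \Big\|_n,
\]
and observe that it is continuous (hence Borel) from $C(\R)^\N \times C(\R)^\N$ to $\R$, since finite linear combinations and the sup-pseudonorms $\|\cdot\|_n$ depend continuously on the sequences. Consequently each single inequality of the form displayed above cuts out a closed (in particular Borel) subset of $C(\R)^\N \times C(\R)^\N$. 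With this building block in hand, the equivalence relation is obtained by the logical combination
\[
\bigcap_{n} \bigcup_{K \in \Q^+} \bigcup_{m} \bigcap_{(a_1,\dots,a_\ell) \in \Q^{<\N}} \big( \text{both inequalities hold} \big),
\]
which is a countable intersection of countable unions of countable intersections of Borel sets, hence Borel.

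The only point requiring a little care — and what I would flag as the main (albeit mild) obstacle — is the legitimacy of restricting the coefficients $a_i$ to $\Q$ and the constant $K$ to $\Q^+$. For the coefficients, the reduction is clean: fixing $n, K, m, \ell$, the set of real tuples satisfying a given inequality is closed, and a continuous function on $\R^\ell$ that is nonnegative on a dense subset is nonnegative everywhere, so the rational tuples suffice. For the constant, one must note that if some real $K$ works then any rational $K' > K$ works as well, so quantifying $K$ over $\Q^+$ rather than $\R^+$ loses nothing. Once these two density arguments are in place, the whole expression is manifestly a Borel combination of the continuous building blocks above, and the lemma follows.
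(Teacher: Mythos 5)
Your proof is correct and takes essentially the same approach as the paper: rewrite the equivalence with all quantifiers ranging over countable sets (rational $K$, rational coefficient tuples) and observe that the resulting countable Boolean combination of closed conditions is Borel. You additionally spell out the density and continuity arguments justifying the restriction to $\Q^+$ and $\Q^{<\N}$, which the paper treats as immediate.
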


\begin{proof}
Since
\begin{align*}
(f_j)_j\sim & (g_j)_j\ \Leftrightarrow\\
&\forall n\in\N,\exists K\in\Q^+,\exists m\in\N,\forall (a_1,...,a_\ell)\in\Q^{<\N}\\
&\left\|\sum_{i=1}^\ell a_if_i\right\|_n\leq K\left\|\sum_{i=1}^\ell a_ig_i\right\|_m\ \&\  \left\|\sum_{i=1}^\ell a_ig_i\right\|_n\leq K\left\|\sum_{i=1}^\ell a_if_i\right\|_m,
\end{align*}
the described set is clearly Borel.
\end{proof}

\begin{lemma}\label{iso}
The set \[\{(X,Y)\in\SF^2\mid X\cong Y\}\] 
is analytic in $\SF^2$. In particular, 
\[\langle X\rangle=\{Y\in \SF\mid Y\cong X\}\]
is analytic for all $X\in \SF$.
\end{lemma}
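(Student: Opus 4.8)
The plan is to express the isomorphism relation as a projection of a Borel set, which immediately yields analyticity. By Lemma \ref{basic}, for $X,Y\in\SF$ we have $X\cong Y$ if and only if there exist sequences $(x_j)_j\in X^\N$ and $(y_j)_j\in Y^\N$ such that $(x_j)_j\sim(y_j)_j$, $\overline{\mathrm{span}}\{x_j\mid j\in\N\}=X$, and $\overline{\mathrm{span}}\{y_j\mid j\in\N\}=Y$. The natural quantifiers here range over sequences in $C(\R)^\N$, which is itself a Polish space, so the idea is to witness the isomorphism by a pair of sequences and then project away these witnesses.

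First I would use the Kuratowski--Ryll-Nardzewski selectors to replace the constraint ``$(x_j)_j\in X^\N$'' by an unconstrained quantifier over $C(\R)^\N$. Concretely, the plan is to rewrite the condition as
\begin{align*}
X\cong Y\ \Leftrightarrow\ \exists (f_j)_j,(g_j)_j\in C(\R)^\N\ \Big(&\Phi((f_j)_j)=X\ \&\ \Phi((g_j)_j)=Y\\
&\ \&\ (f_j)_j\sim(g_j)_j\Big),
\end{align*}
where $\Phi$ is the closed-span map from Lemma \ref{phi}. The forward direction follows from Lemma \ref{basic} directly (the witnessing sequences $(x_j)_j$ and $(y_j)_j$ lie in $C(\R)^\N$ since $X,Y\subseteq C(\R)$), and the backward direction is immediate since $\Phi((f_j)_j)=X$ forces the span to be dense in $X$, so the equivalence $(f_j)_j\sim(g_j)_j$ gives the hypotheses of Lemma \ref{basic}. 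I must also verify that the graph $\{((f_j)_j,X)\mid \Phi((f_j)_j)=X\}$ is Borel; this is where I would invoke that $\Phi$ is a Borel function (Lemma \ref{phi}) together with the fact that the diagonal (equality) in a standard Borel space is Borel, so the preimage of the diagonal under $(\mathrm{id}\times\Phi)$ is Borel.

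With that reformulation, the set inside the existential quantifier is a Borel subset of $C(\R)^\N\times C(\R)^\N\times\SF^2$: it is the intersection of the two Borel graph conditions involving $\Phi$ and the Borel equivalence condition from Lemma \ref{LemmaEquivSeq}. Since the isomorphism relation is the projection of this Borel set onto the $\SF^2$ coordinates along the Polish space $C(\R)^\N\times C(\R)^\N$, it is analytic by the standard fact that projections of Borel sets along Polish factors are analytic. The ``in particular'' clause follows by fixing $Y=X$ and taking the corresponding section, since sections of analytic sets are analytic.

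The only genuine subtlety, rather than a true obstacle, is the bookkeeping that turns the two membership constraints $(x_j)_j\in X^\N$ and $\overline{\mathrm{span}}\{x_j\}=X$ into the single clean condition $\Phi((f_j)_j)=X$. One must check that these are equivalent: if $\Phi((f_j)_j)=X$ then automatically each $f_j\in X$ and the span is dense, while conversely any dense-spanning sequence in $X$ satisfies $\Phi((f_j)_j)=X$. Everything else is a routine assembly of the Borelness of $\Phi$ (Lemma \ref{phi}), the Borelness of the equivalence relation on sequences (Lemma \ref{LemmaEquivSeq}), and the projection theorem, so no hard analysis is required.
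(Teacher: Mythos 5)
Your proposal is correct and follows essentially the same route as the paper: both use Lemma \ref{basic} to witness isomorphism by a pair of equivalent dense-spanning sequences, then invoke Lemmas \ref{phi} and \ref{LemmaEquivSeq} to see that the witnessing set is Borel, and conclude analyticity. The only cosmetic difference is that you phrase the final step as a projection of the (Borel) graph condition, whereas the paper phrases it as the Borel image $\Phi(B)$ of the Borel set $B$ of equivalent pairs --- these are the same standard fact.
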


\begin{proof}
By Lemma \ref{basic},  $X\cong Y$ if and only if there exists $(f_j)_j\in X^\N$ and $(g_j)_j\in Y^\N$ such that $(f_j)_j\sim (g_j)_j$, $\overline{\text{span}}\{f_j\mid j\in\N\}=X$, and $\overline{\text{span}}\{g_j\mid j\in\N\}=Y$. Define a map  
\begin{align*}\Phi\colon  C(\R)^\N\times C(\R)^\N&\to   \SF\\
 ((f_j)_j,(g_j)_j) &\mapsto  (\overline{\text{span}}\{f_j\mid j\in\N\},\overline{\text{span}}\{g_j\mid j\in\N\}) \end{align*}
and notice that, if  \[B=\{((f_j)_j,(g_j)_j)\in C(\R)^\N\times C(\R)^\N\mid (f_j)_j\sim(g_j)_j\},\] then 
\[\Phi(B)=\{(X,Y)\in\SF^2\mid X\cong Y\}.\] Therefore, by Lemmas \ref{phi} and \ref{LemmaEquivSeq}, and  as Borel images of Borel sets are analytic, we are done.
\end{proof}

\begin{proof}[Proof of Theorem \ref{consistent}]
It follows from the classical theory of Banach spaces that there is a linear isomorphism  $\varphi\colon C(\Delta)\to C[0,1]$ (see \cite[Theorem 4.4.8]{AlbiacKaltonBook})  This isomorphism then induces a Borel isomorphism 
\[\Phi\colon \SB\to \SB(C[0,1])\]
 such that $ \Phi(X)\cong X$, for all $X\in\SB$. Let 
\[C_F[0,1]=\{f\in C(\R)\mid f(t)=f(0), \forall t<0, f(t)=f(1), \forall t>1\},\]
 so, $C_F[0,1]\in\SF$. Notice also that $\SB(C_F[0,1])=\{X\in\SF\mid X\subseteq C_F[0,1]\}$ is Borel in $\SF$. Indeed, this follows since  $\{F\in\cF(M)\mid F\subseteq F_0\}$ is Borel, for all Polish spaces $M$ and all $F_0\in\cF(M)$ (see \cite[Exercise 12.11.ii]{KechrisBook1995}). Let now   $\lambda\colon C[0,1]\to C_F[0,1]$ be the isomorphism given by 
 letting
 \[\lambda(f)(t)=\left\{\begin{array}{ll}
 f(t)   ,  &\text{ if } t\in [0,1]  \\
f(0),      &\text{ if }t<0\\
f(1), &\text{ if } t>1.
 \end{array}\right.\]
Clearly, this induces a Borel isomorphism 
\[\Lambda\colon \SB(C[0,1])\to \SB(C_F[0,1]).\]
Then, letting  \[\Psi=\Lambda\circ \Phi,\] we obtain a   map $\SB\to \SF$ which is a Borel isomorphism onto $ \SB(C_F[0,1])$  and such that  $X\cong \Psi(X)$ for all $X\in\SB$. 

Say $\cP$ is a class of separable Banach spaces and let \[\rm P=\{X\in\SB\mid X\in\cP\}\ \text{ and }\ \rm P_{\rm F}=\{X\in\SF\mid X\in \cP\}.\] We need to show that $\rm P$ is $\Sigma^1_n$ (resp.\ $\Pi^1_n$) if and only if $\rm P_{\rm F}$ is $\Sigma^1_n$ (resp.\ $\Pi^1_n$). Suppose $\rm P$ is $\Sigma^1_n$. Then $\Psi(\rm P)$ is $\Sigma^1_n$ in $\SF$, and, by Lemma \ref{iso}, \[\langle\Psi(\rm P)\rangle=\{X\in\SF\mid \exists Y\in \Psi(\rm P),Y\cong X\}\] is $\Sigma^1_n$. As we clearly have that $\rm P_{\rm F}=\langle\Psi(\rm P)\rangle$, we conclude that $P_F$ is $\Sigma^1_n$.

Suppose now that $\rm P_{\rm F}$ is $\Sigma^1_n$. Then, as $\SB(C_F[0,1])$, the set \[A=\rm P_{\rm F}\cap  \SB(C_F[0,1])\]  is $\Sigma^1_n$. As we clearly have that $\rm P=\Psi^{-1}(A)$, we conclude that $\rm P$ must also be $\Sigma^1_n$.

The result for $\Pi^1_n$ is analogous, we just need to work with the appropriate complements. Hence, we also have that $\rm P$ is $\Delta^1_n$ if and only if $\rm P_{\rm F}$ is $\Delta^1_n$. 
\end{proof}

\begin{remark}
    As an immediate consequence of Theorem \ref{consistent}, the result of \cite[Theorem 5]{FerencziLouveauRosendal2009JLMS} regarding $\SB$ extends to the class of separable Fréchet spaces: i.e., the relation of isomorphism on $\SF$ has the highest possible complexity among analytic equivalence relations on Polish spaces (see \cite{FerencziLouveauRosendal2009JLMS} and the references therein for the theory of classification of such equivalence relations by Borel reducibility).
\end{remark}

\section{The space of non-normable  separable   Fréchet spaces}
 In view of Theorem \ref{consistent}, the descriptive set theory of $\SB$ and $\SB_{\mathrm F}$ coincide. Hence, in order to better develop the descriptive set theory of $\SF$, we are left to study 
 \[\SF_{\neg\rm B}=\SF\setminus \SB_{\mathrm F},\]
 the space of all separable Fréchet space which are not Banach, i.e., not normable. This will be the focus of this section.

\subsection{Reflexive Fréchet spaces and Fréchet spaces with separable dual}
 We start looking at two of the most important properties of Banach spaces in the more general context of Fréchet spaces: being reflexive and having a separable dual. Both these properties are well known to be complete coanalytic in $\SB$  (see \cite[Theorems 2.5 and 2.11]{DodosBook2010}). In this subsection, we shall study the complexity of these properties in $\SF_{\neg\rm B}$ by a simple method, which can easily be adapted to show that other properties are also not Borel. We will revisit these classes in Subsection \ref{SubsectionSDREFL} below and obtain better results by using more subtle arguments.

  We start fixing a  concrete embedding of $C(\R)^{\N}$ inside $C(\R)$. First,   recall that if $(X_n)_n$ is a sequence  of Fréchet spaces, then, 
  their product $\prod_{n\in\N}X_n$ is also a Fréchet space; $\prod_{n\in\N}X_n$ is considered endowed with the usual product topology. In case all $X_n$ equal a single Fréchet space, say $X_n=X$ for all $n\in\N$, we simply write $X^\N$ for  $\prod_{n\in\N}X_n$.
  
  We now describe our embedding of  $C(\R)^{\N}$ in  $C(\R)$. Given a prime $p$ and $n\in\N$, we set  
  \[I_{p, n} = \left[p^n + \frac{1}{4}, p^n + \frac{3}{4}\right],\  I_{p, n}^- = \left[p^n, p^n + \frac{1}{4}\right], \text{ and }\ I_{p, n}^+ = \left[p^n + \frac{3}{4}, p^n + 1\right],\] and let $\varphi_{p, n} \colon I_{p, n} \to [-n, n]$ be a continuous bijection. Then, for a prime $p$, we let $i_{p} \colon C(\R) \rightarrow C(\R)$ be the embedding defined by
\[
i_p(f)(x) = \begin{cases}
    f(\varphi_{p, n}(x)), & \mbox{if } x \in I_{p, n} \\
    4f(-n)(x - p^n), & \mbox{if } x \in I_{p, n}^- \\
    -4f(n)(x - p^n - 1), & \mbox{if } x \in I_{p, n}^+ \\
    0, & \mbox{otherwise.}
\end{cases}
\]
Finally, letting  $(p_k)_k$ be an enumeration of the prime numbers, we  define  an embedding $i \colon C(\R)^{\N} \rightarrow C(\R)$ by
 letting 
 \begin{equation}\label{Eq.Map.i.Emb}
i((f_k)_k) = \sum_{k\in\N} i_{p_k}(f_k)
\end{equation}
for all $(f_k)_k\in C(\R)^\N$. Notice that $i$ is well defined. Indeed,  if $k ,l\in\N$  are distinct, then the supports of $i_{p_k}(f_k)$ and of $i_{p_l}(f_l)$ are disjoint.

\begin{lemma}\label{LemmaMapXtoXNBorel} Let $\Psi$ and $i$ be the maps  from Theorem \ref{consistent} and \eqref{Eq.Map.i.Emb}, respectively. Then, the map $\Pi\colon \SB \rightarrow \SF$ defined
by letting \[\Pi(X)=i (\Psi(X)^\N),\]
for all $X\in \SB$, is Borel.
\end{lemma}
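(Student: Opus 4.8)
The plan is to realize $\Pi$ as a composition of Borel maps whose last stage is the Borel map $\Phi$ of Lemma \ref{phi}: I would produce, Borel-in-$X$, a sequence in $C(\R)$ whose closed linear span is exactly $\Pi(X)=i(\Psi(X)^\N)$, and then feed this sequence into $\Phi$. To this end, recall from Lemma \ref{LemmaKuratowski-Ryll-Nardzewski} that there are Borel selectors $(S_n)_n\colon \cF(C(\R))\to C(\R)$ with $\{S_n(F)\}_n$ dense in $F$ for every $F$. Since $\Psi\colon\SB\to\SF$ is Borel by Theorem \ref{consistent} and $\SF\subseteq\cF(C(\R))$, each map $X\mapsto S_n(\Psi(X))$ is Borel and $\{S_n(\Psi(X))\}_n$ is dense in $\Psi(X)$.

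Next, I would pin down a countable generating set for the product $\Psi(X)^\N$. For $k\in\N$ and $f\in C(\R)$, write $e_k\otimes f\in C(\R)^\N$ for the sequence with $f$ in the $k$-th coordinate and $0$ elsewhere, so that $i(e_k\otimes f)=i_{p_k}(f)$ by \eqref{Eq.Map.i.Emb}. Because a basic open set in the product topology constrains only finitely many coordinates, the finitely supported sequences are dense in $\Psi(X)^\N$; combining this with the density of $\{S_n(\Psi(X))\}_n$ in $\Psi(X)$ and the continuity of the coordinate injections $f\mapsto e_k\otimes f$, one checks that
\[
\Psi(X)^\N=\overline{\mathrm{span}}\{e_k\otimes S_n(\Psi(X))\mid k,n\in\N\}.
\]
Since $i$ is a continuous linear embedding with closed image $i(\Psi(X)^\N)\in\SF$, it carries closed linear spans to closed linear spans within its image, whence
\[
\Pi(X)=i(\Psi(X)^\N)=\overline{\mathrm{span}}\{i_{p_k}(S_n(\Psi(X)))\mid k,n\in\N\}.
\]

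Finally, I would fix a bijection $\N\to\N\times\N$, $j\mapsto(k_j,n_j)$, and set $h_j(X)=i_{p_{k_j}}(S_{n_j}(\Psi(X)))$. Each $X\mapsto h_j(X)$ is Borel, being the composition of the Borel map $\Psi$, the Borel selector $S_{n_j}$, and the continuous (hence Borel) map $i_{p_{k_j}}\colon C(\R)\to C(\R)$; consequently $X\mapsto(h_j(X))_j\in C(\R)^\N$ is Borel. As $\Pi(X)=\overline{\mathrm{span}}\{h_j(X)\mid j\in\N\}=\Phi((h_j(X))_j)$ with $\Phi$ Borel by Lemma \ref{phi}, the map $\Pi$ is Borel.

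I expect the only delicate point to be the two displayed identities — that the finitely supported sequences built from the dense selector values generate $\Psi(X)^\N$, and that applying $i$ then recovers all of $i(\Psi(X)^\N)$. These rest on the density of the selectors, the fact that the product topology restricts only finitely many coordinates, and on $i$ being an embedding with closed image (so that it commutes with the operation of taking closed linear spans); the remainder is the routine observation that a finite composition of Borel maps is Borel.
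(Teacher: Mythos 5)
Your proof is correct and follows essentially the same route as the paper: both arguments reduce $\Pi$ to the Borel selectors $S_n$ composed with $\Psi$ and the coordinate embeddings $i_{p_k}$, the only difference being that the paper verifies $j(X)\cap U\neq\emptyset$ directly via density of the finite sums $\sum_{j=1}^k i_{p_j}(S_{n_j}(X))$ in $i(X^\N)$, whereas you package the same countable family as a generating sequence and invoke Lemma \ref{phi}. Both variants rest on the same facts (density of selectors, product topology constraining finitely many coordinates, $i$ an embedding with closed image), so your extra step through closed linear spans is harmless.
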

\begin{proof}
Since $\Psi$ is Borel, it is sufficient to show that the map \[j \colon X \in \SF\mapsto i(X^\N)\in \SF\] is Borel. But this is straightforward since, for a given open subset $U \subseteq C(\R)$, we have that
\begin{align*}
   j(X) \cap U \neq \emptyset \ \Leftrightarrow \ \exists n_1,\ldots, n_k\in \N\ \text{ such that } \  \sum_{j=1}^ki_{p_j}(S_{n_j}(X))\in U,
\end{align*}
where $(S_n\colon \SF\to C(\R))_n$ is the sequence of Borel selectors given by Lemma \ref{LemmaKuratowski-Ryll-Nardzewski}.
\end{proof}

\begin{cor}\label{CorollarySDandReflNotBorel} Given a Banach space $X$,  $X$  is reflexive (resp., has a separable dual) if and only if $X^\N$ is reflexive (resp., has a separable dual). In particular, $\mathrm{REFL}_{\neg \rm B}$  and $\mathrm{SD}_{\neg \rm B}$ are not Borel.\end{cor}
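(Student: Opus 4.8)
The plan is to split Corollary \ref{CorollarySDandReflNotBorel} into its two claims: first, that reflexivity and separability of the dual transfer between a Banach space $X$ and the Fréchet space $X^\N$; and second, that this equivalence upgrades, via the Borel map of Lemma \ref{LemmaMapXtoXNBorel}, to a Borel reduction witnessing that $\mathrm{REFL}_{\neg\rm B}$ and $\mathrm{SD}_{\neg\rm B}$ are not Borel.

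First I would establish the equivalences using the duality of countable products of Fréchet spaces. For reflexivity: $X$ is topologically complemented in $X^\N$ as the first factor, and complemented subspaces of reflexive spaces are reflexive, which yields the forward implication; conversely, $X^\N$ is a countable product of copies of a reflexive space, and products of reflexive locally convex spaces are reflexive (one uses that Fréchet spaces are barrelled and that semireflexivity passes to products), which yields the backward implication. For the separable dual, I would identify the strong dual of $X^\N=\prod_n X$ with the locally convex direct sum $\bigoplus_n X^*$. If $X^*$ is separable then $\bigoplus_n X^*$, being the countable union of its subspaces of sequences supported on the first $m$ coordinates (each isomorphic to a finite power of $X^*$, hence separable), is separable, so $X^\N$ has separable dual; conversely, the coordinate projection $\bigoplus_n X^*\to X^*$ is continuous and onto, so separability of $(X^\N)^*$ descends to $X^*$.

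Next I would assemble the reduction. By Lemma \ref{LemmaMapXtoXNBorel} the map $\Pi(X)=i(\Psi(X)^\N)$ is Borel, and $\Pi(X)\cong X^\N$ since $\Psi(X)\cong X$ (Theorem \ref{consistent}) and $i$ is an embedding. For $X\neq\{0\}$ the space $X^\N$ has no bounded neighborhood of zero, so $\Pi(X)\in\SF_{\neg\rm B}$ by Theorem \ref{ThmFréchetIsBanachIFFBoundedN}. Combining this with the equivalences gives $\Pi^{-1}(\mathrm{REFL}_{\neg\rm B})=\mathrm{REFL}\setminus\{\{0\}\}$ and $\Pi^{-1}(\mathrm{SD}_{\neg\rm B})=\mathrm{SD}\setminus\{\{0\}\}$. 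Since $\mathrm{REFL}$ and $\mathrm{SD}$ are non-Borel in $\SB$ (they are complete coanalytic, see \cite{DodosBook2010}) and deleting the single Borel point $\{0\}$ preserves non-Borelness, both right-hand sides are non-Borel; were $\mathrm{REFL}_{\neg\rm B}$ or $\mathrm{SD}_{\neg\rm B}$ Borel, its preimage under the Borel map $\Pi$ would be Borel, a contradiction.

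I expect the main obstacle to be the duality bookkeeping in the separable-dual case. Because separability is a topological rather than algebraic property, it does not suffice to match the duals as vector spaces: one must check that the strong dual of the Fréchet space $\prod_n X$ genuinely carries the locally convex direct sum topology of $\bigoplus_n X^*$, and that both separability arguments (the countable-union argument for one direction and the continuous-projection argument for the other) are valid in exactly that topology. The reflexivity equivalence is comparatively routine, needing only the standard stability of (semi)reflexivity under products and complementation.
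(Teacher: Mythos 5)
Your proposal is correct and takes essentially the same route as the paper: the equivalences are proved via the standard results on countable products (the paper cites Meise--Vogt, Proposition 25.15 for reflexivity and Proposition 24.3 for identifying the dual of $X^\N$ with $\bigoplus_n X^*$, which is exactly what you sketch), and non-Borelness is then transferred through the Borel map of Lemma \ref{LemmaMapXtoXNBorel} using the known complete coanalyticity of $\mathrm{REFL}$ and $\mathrm{SD}$ in $\SB$. Your additional care in handling the point $\{0\}$ and in checking that $\Pi(X)\in\SF_{\neg \rm B}$, as well as using complementation and coordinate projections instead of the paper's uniform observation that both properties pass to closed subspaces, are only minor elaborations of the same argument.
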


\begin{proof} The  backwards implication follows since both the properties of being reflexive and having separable dual pass to subspaces and $X$ isomorphically embeds into $X^\N$.  For the forward implication, suppose first that $X$ is reflexive. Then the fact that $X^\N$ is also reflexive is given by \cite[Proposition 25.15]{MeiseVogt1997Book}. On the other hand, if $X$ has separable dual, the same holds for $X^\N$ since, by   \cite[Proposition 24.3]{MeiseVogt1997Book}, its dual is isomorphic to the direct sum
\[\bigoplus_{n\in\N} X^*=\left\{(x^*_n)_n\in (X^*)^\N\mid \exists n_0\in \N,\forall n\geq n_0,\ x_n=0\right\}\]
endowed with the locally convex topology induced by all pseudonorms of the form 
\[\|\cdot\|\colon (x_n^*)_n\in \bigoplus_{n\in\N} X^*\mapsto \sum_{n\in\N}\|x_n^*\|_n,\]
where $(\|\cdot\|_n)_n$ ranges over all sequences such that each $\|\cdot\|_n$ is a continuous pseudonorm on $X^*$. 
As $\bigoplus_{n\in\N} X^*$ is clearly separable if $X^*$ is separable, we conclude that $X^\N$ has separable dual.

The last assumption follows from Lemma \ref{LemmaMapXtoXNBorel} and the fact that the coding for both the reflexive Banach spaces and the Banach spaces with separable dual in $\SB$ are not Borel  (see \cite[Theorems 2.5 and 2.11]{DodosBook2010}).\end{proof}

\subsection{The complexity of Fréchet-Hilbert spaces}\label{SubsectionFH}
A Fréchet space $X$ is called \emph{Hilbertian} if it is isomorphic to a Hilbert space. In particular, Hilbertian spaces are normable. 
A Fréchet space $X$ is called \emph{Fréchet-Hilbert} is there is a sequence $(\|\cdot\|_n)_n$ of pseudonorms on $X$ generating its topology and such that each quotient
\[X/\{x\in X\mid \|x\|_n=0\}\]
is Hilbertian.   We let 
\[\mathrm{FH}=\{X\in \SF\mid X\ \text{ is a Fréchet-Hilbert space}\}.\]
Infinite dimensional separable Fréchet-Hilbert spaces are completely characterized as being isomorphic to either $\ell_2$,  $\R^\N$,  $\ell_2^\N$, or  $\ell_2 \oplus \R^\N$ (see \cite[Theorem 6]{Zarnadze1993Izvestiya}). So, letting $\mathrm{FH}_\infty$ denote the infinite dimensional Fréchet-Hilbert spaces, we have 
\[\mathrm{FH}_\infty=\langle\ell_2\rangle\sqcup\langle \R^\N\rangle\sqcup \langle\ell_2^\N\rangle\sqcup\langle\ell_2\oplus \R^\N\rangle.\]
The isomorphism class of $\ell_2$ has already been studied by Bossard and it follows from Kwapie\'{n}'s theorem that it is Borel (see \cite[Page 130]{Bossard2002}). In this subsection, we will focus on the classes of the remaining Fréchet-Hilbert spaces:
\[\mathrm{FH}_{\neg \rm B}=\langle \R^\N\rangle\sqcup \langle\ell_2^\N\rangle\sqcup\langle\ell_2\oplus \R^\N\rangle.\]
 By the characterization of Fréchet-Hilbert spaces just mentioned, a Fréchet space $X$ belongs to $\mathrm{FH}_{\neg \rm B}$ if and only if it is isomorphic to a direct product $\prod_{n \in \N} H_n$ where each $H_n$ is a Hilbert space with dimension either $1$ or $\aleph_0$.

\begin{lemma}\label{Lemmal2NCharacterization}
 Let $Y$ be a Fréchet space and $(\|\cdot\|_n)_n$ be an increasing sequence   of pseudonorms generating its topology.  For each $m\in\N$,  let  
\[C_m=\{y\in Y\mid \|y\|_m=0\}.\]
Then, given a subspace $X$ of $Y$, we have that
\begin{equation}\label{Eq.l2N.Description.0}X \in \mathrm{FH}_{\neg \rm B}  \Leftrightarrow\ \forall m\in\N\ \Big(X\cap C_m\neq \{0\}\Big)\wedge \Big(X/(X\cap C_m) {\rm\ is\ Hilbertian}\Big).\end{equation}
\end{lemma}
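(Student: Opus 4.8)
The plan is to leverage the characterization of $\mathrm{FH}_{\neg \rm B}$ recalled just above the statement --- namely that $X\in\mathrm{FH}_{\neg \rm B}$ if and only if $X\cong\prod_n H_n$ with each $H_n$ a Hilbert space of dimension $1$ or $\aleph_0$, and hence with infinitely many nonzero factors --- together with the elementary observation that the pseudonorms $(\|\cdot\|_n|_X)_n$ inherited from $Y$ are increasing and generate the subspace topology of $X$. The point of the latter is that the right-hand condition ``$X/(X\cap C_m)$ is Hilbertian for all $m$'' is, word for word, the definition of $X$ being Fréchet-Hilbert \emph{witnessed by this particular pseudonorm system}, since $X\cap C_m=\ker(\|\cdot\|_m|_X)$. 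This is what makes one implication essentially formal and isolates where the real work lies.

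For the backward implication, assume both right-hand conditions. As $(\|\cdot\|_n|_X)_n$ generates the topology of $X$, the Hilbertianity of all the quotients $X/(X\cap C_m)$ is precisely the assertion that $X$ is Fréchet-Hilbert. It remains to check that $X$ is non-normable (hence automatically infinite-dimensional), so that the classification places it in $\mathrm{FH}_{\neg \rm B}$. Here I would invoke Theorem \ref{ThmFréchetIsBanachIFFBoundedN}: if $X$ were normable it would have a bounded neighbourhood $U$ of $0$, which we may take to contain some $\{x\in X:\|x\|_{m_0}<r\}$; boundedness forces $\sup_{x\in U}\|x\|_n<\infty$ for every $n$, and a homogeneity argument then shows that any $x$ with $\|x\|_{m_0}=0$ satisfies $\|x\|_n=0$ for all $n$, i.e. $X\cap C_{m_0}=\{0\}$, contradicting the first condition. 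Thus $X$ is a non-normable, infinite-dimensional, separable Fréchet-Hilbert space (separability being inherited from $Y$, a subspace of $C(\R)$ in our setting), so by the classification quoted above $X\cong\R^\N$, $\ell_2^\N$, or $\ell_2\oplus\R^\N$, i.e. $X\in\mathrm{FH}_{\neg \rm B}$.

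For the forward implication, suppose $X\cong\prod_n H_n$ as above and fix an isomorphism $\Phi\colon\prod_n H_n\to X$. The crux is the fact that every continuous seminorm $q$ on $\prod_n H_n$ kills a tail: by continuity at $0$ there are $M\in\N$ and $\delta>0$ with $\{z:\max_{n\le M}\|z_n\|_{H_n}<\delta\}\subseteq\{q<1\}$, and homogeneity gives $\ker q\supseteq\prod_{n>M}H_n$. Applying this to $q=\|\Phi(\,\cdot\,)\|_m$ yields $X\cap C_m=\Phi(\ker q)\supseteq\Phi\big(\prod_{n>M}H_n\big)\neq\{0\}$ (infinitely many factors are nonzero), which is the first condition. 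For the second, transporting through $\Phi$ identifies $X/(X\cap C_m)$ with $\big(\prod_n H_n\big)/\ker q$; since $\ker q\supseteq\prod_{n>M}H_n$, transitivity of quotients identifies this, as a Fréchet space carrying the quotient topology, with a quotient of the genuine Hilbert space $\prod_{n\le M}H_n$ by a closed subspace, hence with a Hilbert space. So $X/(X\cap C_m)$ is Hilbertian.

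The main obstacle is exactly this last point: condition $(2)$ demands that the quotients by the \emph{specific} inherited seminorms be Hilbertian, and this does \emph{not} follow formally from $X$ being Fréchet-Hilbert, since that notion only supplies \emph{some} adapted seminorm system and a badly chosen continuous seminorm on a Hilbert space can have a non-Hilbertian completion. What rescues us is that on a product $\prod_n H_n$ every continuous seminorm factors through a finite sub-product, collapsing $X/(X\cap C_m)$ to a quotient of an honest Hilbert space; it is here that one must use the quotient (Fréchet) topology on $X/(X\cap C_m)$ rather than the single induced seminorm. A secondary point is the separability of $X$ needed for the classification, which is automatic because $X$ is a subspace of the separable space $Y$.
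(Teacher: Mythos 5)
Your proof is correct. The forward implication is in substance identical to the paper's: both transport $\|\cdot\|_m$ through an isomorphism $T\colon\prod_nH_n\to X$, note that a continuous seminorm on such a product annihilates a tail $\prod_{n>M}H_n$, and realize $X/(X\cap C_m)$ as a quotient of the Hilbert space $\prod_{n\le M}H_n$ by a closed subspace; the paper gets the topological identification via the open mapping theorem applied to the surjection that $T$ induces on a quotient of the finite sub-product, while you quotient by the full preimage $\ker q$ and use transitivity of quotients, a slightly cleaner variant. The genuine difference is the non-normability step of the backward implication. The paper picks a linearly independent sequence $x_m\in X\cap C_m\setminus\{0\}$ and shows that $(\lambda_m)_m\mapsto\sum_m\lambda_mx_m$ is an isomorphic embedding of $\R^\N$ into $X$, whence $X\notin\SB_{\rm F}$; you argue directly from Theorem \ref{ThmFréchetIsBanachIFFBoundedN}: a bounded neighborhood of zero is absorbed by every $\{\|\cdot\|_n<1\}$, and homogeneity then forces $X\cap C_{m_0}=\{0\}$ for some $m_0$. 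Your route is more elementary --- it avoids choosing the independent sequence, proving convergence of the series, and checking that the resulting map is an embedding (the step the paper dismisses as ``immediate'' but which takes some care) --- at the price of not obtaining the stronger structural fact that such an $X$ contains a copy of $\R^\N$. Both arguments then finish the same way: $X\in\mathrm{FH}$ is read off the definition using the restricted pseudonorms, and the classification of infinite dimensional separable Fréchet--Hilbert spaces quoted before the lemma (which indeed requires the separability you flag, left implicit in the paper) yields $X\in\mathrm{FH}_{\neg\rm B}$. Your closing remark is also apt: the quotients must carry the quotient Fréchet topology rather than the topology of the single seminorm $\|\cdot\|_m$; under the seminorm/completion reading the forward implication would actually fail, and the paper's appeal to the open mapping theorem only makes sense for the quotient topology.
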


\begin{proof} 
For the forward implication, suppose $X\in \mathrm{FH}_{\neg \rm B} $. So
 $X$ is isomorphic to some direct sum   $H=\prod_{n \in \N}H_n$, where each $H_n$ is a Hilbert space of dimension either $1$ or $\aleph_0$. Let $T\colon H \to X$ be an isomorphism and for each $n\in\N$, let 
\[| \lambda|_n=\max_{i\in \{1,\ldots, n\}}\|\lambda_i\|_{H_i}\]
for all $\lambda=(\lambda_i)_i\in H$. So, $(|\cdot|_n)_n$ is an increasing sequence of pseudonorms of $H$ generating its topology. Fix an arbitrary $m\in\N$ and let us show that $X$ satisfies the right-hand side of \eqref{Eq.l2N.Description.0} for this $m$. As $T$ is continuous,    there are $K>0$ and $n\in \N$ such that 
\[\|T( \lambda)\|_m\leq K|\lambda|_n\]
for all $ \lambda\in H$. In particular, letting 
\[D_n=\{\lambda\in H \mid |\lambda|_n=0\},\]
we have that 
\begin{equation}\label{Eq23Oct24.2.222}T(D_n)\subseteq X\cap C_m\end{equation}
So, $X\cap C_m\neq \{0\}$. Moreover, as $T$ is a surjective map, \eqref{Eq23Oct24.2.222} implies that $T$  induces a well defined surjective map 
\[\lambda+D_n\in H/D_n\mapsto T(\lambda)+X\cap C_m\in X/(X\cap C_m). \]
So, $X/(X\cap C_m)$ is isomorphic to a quotient of $H/D_n $. As the latter  is isomorphic to $ \oplus_{i=1}
^nH_i$,   the open mapping theorem for Fréchet spaces shows that $X/(X\cap C_m)$ is isomorphic to a Hilbert space as desired.

Suppose  $X$ is a subspace of $Y$ satisfying the conditions on the right-hand side of \eqref{Eq.l2N.Description.0}. 
The metric $d$ on $Y$ given by 
\[d(x,y)=\sum_{n\in\N}\frac{1}{2^n}\frac{\|x-y\|_n}{\|x-y\|_n+1},\] 
for all $x,y\in Y$, is a complete metric compatible with the topology of $Y$. As $X\cap C_m\neq \{0\}$ for all $m\in\N$, we can pick a sequence $(x_m)_m$ in $X\setminus \{0\}$ such that $x_m\in X\cap C_m$ for all $m\in\N$. Going to a subsequence if necessary, we can assume without  loss of generality that $(x_m)_m$ is linearly independent. Then, since the  metric $d$ above restricts to a complete metric on $X$, it is immediate that the map 
\[(\lambda_m)_m\in \R^\N\mapsto\sum_{m\in\N}\lambda_mx_m\in X\]
is a well-defined isomorphic embedding. In particular, $X$ contains an isomorphic copy of $\R^\N$ and it cannot be in $\SB_{\rm F}$. 

So we are left to notice that $X\in \rm FH$. But this is immediate since the quotient $X/(X\cap C_m)$ is assumed to be Hilbertian for all $m\in\N$.   \end{proof}

\begin{cor}\label{detail} Let $Y$ be a Fréchet space and $(C_m)_m$ be as in
Lemma \ref{Lemmal2NCharacterization}. If $Y\in \mathrm{FH}_{\neg \rm B} $, then
\[X\in \langle\R^\N\rangle\ \Leftrightarrow \ \forall m\in\N,\ \dim(X/(X \cap C_{m}))<\infty\]
and 
\[X\in \langle\ell_2^\N\rangle\ \Leftrightarrow\ \forall n \in \N, \exists m \geq n,\  \dim( (X \cap C_n)/(X \cap C_m)) =\infty.\]
\end{cor}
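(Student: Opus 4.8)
The plan is to reduce both equivalences to a single piece of data — the dimensions of the successive \emph{increments} of the filtration of $X$ by the subspaces $X\cap C_m$ — and then read off the isomorphism type from that data. Throughout I assume, as in the intended application via Lemma \ref{Lemmal2NCharacterization}, that $X\in\mathrm{FH}_{\neg \rm B}$ (so $X$ is one of $\R^\N$, $\ell_2^\N$, $\ell_2\oplus\R^\N$); this hypothesis is genuinely needed, since the first equivalence fails for finite-dimensional $X$. Write $V_m=X\cap C_m$, so that $V_0=X\supseteq V_1\supseteq V_2\supseteq\cdots$ (the $C_m$ decrease because the pseudonorms increase) and $\bigcap_m V_m=\{0\}$. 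Set $e_k=\dim(V_{k-1}/V_k)\in\{0,1,2,\dots\}\cup\{\aleph_0\}$. Since dimension is additive along a finite filtration,
\[\dim\big(X/(X\cap C_m)\big)=\sum_{k=1}^m e_k\qquad\text{and}\qquad \dim\big((X\cap C_n)/(X\cap C_m)\big)=\sum_{k=n+1}^m e_k\quad(n<m).\]
Hence the right-hand side of the first equivalence says exactly that every $e_k$ is finite, while the right-hand side of the second says that $e_k=\aleph_0$ for infinitely many $k$. So it suffices to prove the trichotomy: $X\cong\R^\N$ iff no $e_k$ is infinite; $X\cong\ell_2^\N$ iff infinitely many $e_k$ are infinite; and $X\cong\ell_2\oplus\R^\N$ iff finitely many but at least one $e_k$ is infinite.

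To prove the trichotomy I would represent $X$ through its local Hilbert spaces. For each $m$ let $\widehat X_m$ be the local Banach space of the seminorm $\|\cdot\|_m$ on $X$ (the completion of $X/(X\cap C_m)$ in the induced norm); since $X$ is Fréchet--Hilbert these are Hilbert spaces, the canonical linking maps $\rho_m\colon\widehat X_m\to\widehat X_{m-1}$ are continuous surjections, and $X$ is topologically isomorphic to the reduced projective limit $\varprojlim_m\widehat X_m$ (the standard representation of a Fréchet space via its local Banach spaces; see \cite{MeiseVogt1997Book}). Each $\rho_m$, being a surjection of Hilbert spaces, splits: setting $N_m=\ker\rho_m$ and mapping $(\ker\rho_m)^{\perp}$ isomorphically onto $\widehat X_{m-1}$ gives compatible identifications $\widehat X_m\cong N_1\oplus\cdots\oplus N_m$ turning each $\rho_m$ into a coordinate projection. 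Passing to the limit yields a topological isomorphism $X\cong\prod_k N_k$, where $N_k$ is a Hilbert space with $\dim N_k=\dim\ker\rho_k$. Crucially, $\dim N_k$ is finite (resp.\ infinite) exactly when $\dim\big((X\cap C_{k-1})/(X\cap C_k)\big)=e_k$ is, since completion does not affect the finite/infinite dichotomy; thus each $N_k$ is either finite-dimensional or isomorphic to $\ell_2$, and the counts entering the two criteria are precisely the numbers $\dim N_k$.

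It then remains to classify the product $\prod_k N_k$ by the cardinality of $I=\{k:\dim N_k=\aleph_0\}$, using the elementary rearrangement isomorphisms $\ell_2\oplus\R\cong\ell_2$, $\ell_2^{\,n}\cong\ell_2$ for finite $n$, and $\ell_2^\N\times\R^\N\cong\ell_2^\N$ to regroup factors. If $I$ is infinite, then $\prod_k N_k\cong\ell_2^\N\times(\R^\N\text{ or }\R^d)\cong\ell_2^\N$; if $I$ is finite and nonempty, then $\prod_k N_k\cong\ell_2\times(\R^\N\text{ or }\R^d)$; and if $I=\emptyset$, then $\prod_k N_k\cong\R^\N$ or $\R^d$. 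In the last two cases the possibility of a finite-dimensional $\R$-part (which would make $X$ normable, namely $\cong\ell_2$ or finite-dimensional) is excluded by the standing hypothesis $X\in\mathrm{FH}_{\neg \rm B}$ together with Theorem \ref{ThmFréchetIsBanachIFFBoundedN}; so the $\R$-part is $\R^\N$ and $X\cong\ell_2\oplus\R^\N$, resp.\ $X\cong\R^\N$. This establishes the trichotomy, hence $X\in\langle\R^\N\rangle$ iff all $e_k$ are finite and $X\in\langle\ell_2^\N\rangle$ iff infinitely many $e_k$ are infinite, which are the two asserted equivalences.

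The routine parts are the additivity of dimension and the translation in the first paragraph; the main obstacle is the middle paragraph — realizing $X$ honestly as the \emph{topological} product $\prod_k N_k$. One must choose the Hilbert-space splittings of the $\rho_m$ compatibly, so that the induced identifications commute with the linking maps, and then verify that the resulting bijection $X\to\prod_k N_k$ is a homeomorphism for the Fréchet topologies (equivalently, that the seminorms $\|\cdot\|_m$ correspond, up to equivalence, to the partial-product Hilbert norms). The related subtlety that $\dim(X/(X\cap C_m))$ is computed on the algebraic quotient whereas the splitting lives on the completed local Hilbert space is harmless, since passing to the completion preserves both finiteness and infiniteness of dimension, which is all the two criteria test.
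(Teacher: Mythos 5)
Your reading of the statement is the right one: the hypothesis must really be $X\in\mathrm{FH}_{\neg \rm B}$ (this is how the paper uses Corollary \ref{detail}, and your finite-dimensional counterexample shows the literal wording fails), and your first paragraph — reducing both equivalences to the finite/infinite pattern of the increments $e_k=\dim\bigl((X\cap C_{k-1})/(X\cap C_k)\bigr)$ — together with the final regrouping paragraph is correct. The paper gives no written proof (it is declared "elementary from Lemma \ref{Lemmal2NCharacterization}" and left to the reader), so everything hinges on your middle step, which you yourself flag as the main obstacle. That step, as written, fails. You define $\widehat X_m$ as the completion of $X/(X\cap C_m)$ in the norm induced by $\|\cdot\|_m$ and assert (i) that these are Hilbert spaces "since $X$ is Fréchet--Hilbert" and (ii) that the linking maps $\rho_m$ are surjective. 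Neither follows. The Fréchet--Hilbert property, via Lemma \ref{Lemmal2NCharacterization}, gives that $X/(X\cap C_m)$ with the \emph{quotient Fréchet topology} is Hilbertian; the $\|\cdot\|_m$-norm topology on this quotient is in general strictly coarser, and completing a Hilbertian space in a weaker norm can destroy Hilbertianity. Concretely, take $X=Y=\ell_2\oplus\R^\N\in\mathrm{FH}_{\neg\rm B}$ with the admissible increasing generating sequence $\|(z,w)\|_1=\|z\|_{\ell_4}+|w_1|$ and $\|(z,w)\|_m=\|z\|_{\ell_2}+\max_{i\le m}|w_i|$ for $m\ge 2$ (note $\|z\|_{\ell_4}\le\|z\|_{\ell_2}$). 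Then $\widehat X_1\cong \ell_4\oplus\R$, which is not Hilbertian, and the linking map $\widehat X_2\cong\ell_2\oplus\R^2\to\widehat X_1$ acts on the first factor as the inclusion $\ell_2\hookrightarrow\ell_4$: dense range, not surjective. In general the linking maps of a Fréchet space only have dense range; surjectivity is a special (quojection-type) phenomenon your argument needs but never establishes. So the orthogonal splittings and the identification $X\cong\prod_k N_k$ cannot be run on these local Banach spaces.

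The gap is repairable, and the repair is precisely what Lemma \ref{Lemmal2NCharacterization} hands you: run the same argument with $Q_m=X/(X\cap C_m)$ carrying the quotient topology instead of $\widehat X_m$. By the lemma each $Q_m$ is Hilbertian; the maps $Q_m\to Q_{m-1}$ are surjective for trivial algebraic reasons (they are induced by the identity of $X$); closed subspaces of Hilbertian spaces are complemented, so the splittings exist; and a short completeness argument (pick representatives $x_m$ of a compatible sequence and note $\|x_{m'}-x_m\|_k=0$ for $m,m'\ge k$) shows the canonical map $X\to\varprojlim_m Q_m$ is a topological isomorphism. With these substitutions your decomposition and trichotomy go through — modulo the fact, implicit in your "iff" conclusions, that $\R^\N$, $\ell_2^\N$, $\ell_2\oplus\R^\N$ are pairwise non-isomorphic, which the paper supplies via Zarnadze's classification. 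For comparison, the route the authors presumably intend avoids projective limits altogether: fix an isomorphism $T\colon\prod_n H_n\to X$ from the proof of Lemma \ref{Lemmal2NCharacterization}, observe that $\|\cdot\|_m\circ T$ is dominated by $C\max_{i\le k}\|\cdot\|_{H_i}$ for some $k$ (so $X\cap C_m$ contains the image of a cofinite subproduct, forcing all quotients to be finite dimensional when every $H_n$ is finite dimensional), and use Proposition \ref{PropBanachSpaceEmbFréchet} to place any copy of $\ell_2$ inside $X$, resp.\ inside $X\cap C_n$, isomorphically on a single pseudonorm $\|\cdot\|_m$, which forces $\dim(X/(X\cap C_m))=\infty$, resp.\ $\dim((X\cap C_n)/(X\cap C_m))=\infty$; the equivalences then follow by exhausting the three types. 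Either way, the proposal in its current form has a genuine, though fixable, hole at its central step.
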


\begin{proof} This follows elementarily from Lemma \ref{Lemmal2NCharacterization} and we leave the details to the reader.
\end{proof}

 The next corollary is certainly  well known to experts, but we include it here for the non-expert reader   since it is a nice quick application of the methods above.

\begin{cor}\label{CorSubspaceofRN}
    All infinite dimensional subspaces of $\R^\N$ are isomorphic to $\R^\N$.  
\end{cor}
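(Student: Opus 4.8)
The plan is to show that every infinite-dimensional subspace $X$ of $\R^\N$ satisfies the right-hand side of the $\langle\R^\N\rangle$ characterization in Corollary \ref{detail}, applied to the ambient space $Y=\R^\N$. First I would equip $Y=\R^\N$ with its standard increasing sequence of pseudonorms $\|\lambda\|_m=\max_{i\le m}|\lambda_i|$, so that the subspace $C_m=\{\lambda\in\R^\N\mid \lambda_1=\cdots=\lambda_m=0\}$ consists of those sequences vanishing in the first $m$ coordinates. With this concrete description, the quotient $Y/C_m$ is just $\R^m$ via the projection onto the first $m$ coordinates, and hence for any subspace $X$ the quotient $X/(X\cap C_m)$ is isomorphic to the image of $X$ under the projection $\pi_m\colon \R^\N\to\R^m$, which is a subspace of the finite-dimensional space $\R^m$.

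**The key step** is then immediate: for every $m\in\N$ we have $\dim\big(X/(X\cap C_m)\big)=\dim\pi_m(X)\le \dim\R^m=m<\infty$. Since $Y=\R^\N$ is itself Fréchet-Hilbert and not normable, it lies in $\mathrm{FH}_{\neg\rm B}$, so Corollary \ref{detail} applies to the ambient space $Y$. To invoke that corollary for $X$, I first need to know that $X\in\mathrm{FH}_{\neg\rm B}$; this follows from Lemma \ref{Lemmal2NCharacterization}, since each quotient $X/(X\cap C_m)$, being finite-dimensional, is trivially Hilbertian, and since $X$ is infinite-dimensional one checks that $X\cap C_m\ne\{0\}$ for every $m$ (otherwise $\pi_m$ would be injective on $X$, forcing $\dim X\le m$). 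Having verified $X\in\mathrm{FH}_{\neg\rm B}$ together with the finiteness condition $\forall m\,\dim(X/(X\cap C_m))<\infty$, the first equivalence of Corollary \ref{detail} yields directly that $X\in\langle\R^\N\rangle$, i.e.\ $X\cong\R^\N$.

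**The main obstacle**, such as it is, lies in the bookkeeping to apply Corollary \ref{detail}: that corollary presupposes $Y\in\mathrm{FH}_{\neg\rm B}$ and characterizes when a subspace $X$ lands in $\langle\R^\N\rangle$, so I must make sure the hypotheses are genuinely met rather than assumed. Concretely, the one nontrivial verification is that $X\cap C_m\ne\{0\}$ for all $m$, which is where infinite-dimensionality of $X$ is used; the Hilbertian condition on the quotients is automatic because finite-dimensional spaces are Hilbertian. Everything else is the trivial identification of $Y/C_m$ with $\R^m$. I expect no analytic difficulty beyond this, so the proof is genuinely short, exactly as the authors indicate by deferring details to the reader in Corollary \ref{detail}; the content is really just the observation that finite-dimensional projection images force the $\langle\R^\N\rangle$ regime.
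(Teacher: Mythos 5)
Your proposal is correct and follows essentially the same route as the paper's proof: both verify, for the filtration $(C_m)_m$ of Lemma \ref{Lemmal2NCharacterization} with $Y=\R^\N$, that $\dim(X/(X\cap C_m))<\infty$ (since $\R^\N/C_m$ is finite dimensional) and that $X\cap C_m\neq\{0\}$ via the same injectivity-of-the-quotient-map contradiction with infinite dimensionality, and then invoke Corollary \ref{detail}. Your extra care in checking that $X\in\mathrm{FH}_{\neg\rm B}$ before applying Corollary \ref{detail} is a reasonable bit of bookkeeping that the paper leaves implicit, but it is the same argument.
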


\begin{proof}
   Suppose $X $ is an infinite dimensional subspace of $\R^\N$. Let $(|\cdot|_n)_n$ be an increasing sequence of pseudonorms on $\R^\N$ generating its topology.  Letting $C_m$ be as in Lemma \ref{Lemmal2NCharacterization} for $Y=\R^\N$, we have that   $\dim(\R^\N/ C_m)<\infty$ for all $m\in\N$ (cf. Corollary \ref{detail}). In particular, \[\dim( X/( X \cap C_m))=\dim(X/  C_m)<\infty\] for all $m\in\N$.
Also, if $X\cap C_m=\{0\}$ for some $m\in\N$, then 
\[x\in X\mapsto x+ X\cap C_m\in X/(X\cap C_m)\]
is injective. But then, as $X$ has infinite dimension, so has $X/(X\cap C_m)$; contradiction. Hence, $X\cap C_m\neq\{0\}$ for all $m\in\N$. By Corollary \ref{detail}, $X$ is isomorphic to $\R^\N$.
\end{proof}

\begin{theorem}\label{ThmFHBorel}
The coding for the class of Fréchet-Hilbert spaces $\rm{FH}$ is Borel as well as the isomorphism classes of $\R^\N$, $\ell_2^\N$, and $\ell_2 \oplus \R^\N$.
\end{theorem}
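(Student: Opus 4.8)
The plan is to prove that each of the four sets---$\rm{FH}$, $\langle\R^\N\rangle$, $\langle\ell_2^\N\rangle$, and $\langle\ell_2\oplus\R^\N\rangle$---is Borel in $\SF$, using the characterizations developed in Lemma \ref{Lemmal2NCharacterization} and Corollary \ref{detail} together with the standard machinery of Borel selectors and the Borelness of inclusion in $\cF(C(\R))$. The overarching strategy is to reduce each isomorphic/structural condition to a countable quantification over conditions that are individually recognizable by Borel operations on the closed balls $\overline{B}_{n,r}(X)$ and the Borel selectors $(S_k)_k$ from Lemma \ref{LemmaKuratowski-Ryll-Nardzewski}.

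First I would handle $\rm{FH}$ itself. Since $\rm{FH}=\langle\ell_2\rangle\sqcup\mathrm{FH}_{\neg\rm B}$ (after adding the finite-dimensional spaces, which form a Borel set), and $\langle\ell_2\rangle$ is already known to be Borel by Kwapie\'{n}'s theorem (via Bossard), it suffices to show $\mathrm{FH}_{\neg\rm B}$ is Borel. For this I would Borel-ize the right-hand side of \eqref{Eq.l2N.Description.0}. With $Y=C(\R)$ and $C_m=\{f\in C(\R)\mid\|f\|_m=0\}$, the condition $X\cap C_m\neq\{0\}$ is equivalent to the existence of some Borel selector $S_k(X)$ lying in $C_m\setminus\{0\}$, or more robustly to $\overline{B}_{n,r}(X)\not\subseteq C_m$ failing in an appropriate sense; the key point is that $\|S_k(X)\|_m$ is a Borel function of $X$, so ``$\exists k\,(\|S_k(X)\|_m=0\wedge S_k(X)\neq 0)$'' is Borel. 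The harder clause is ``$X/(X\cap C_m)$ is Hilbertian.'' Since this quotient is a \emph{Banach} space (being normable, as the induced pseudonorm $\|\cdot\|_m$ becomes a genuine norm on the quotient), I would argue that being Hilbertian is a Borel condition on it: the quotient norm is recoverable from the pseudonorm $\|\cdot\|_m$ applied to the Borel selectors $S_k(X)$, and Hilbertianness is characterized by the parallelogram law, which is a closed (hence Borel) condition quantified over pairs of rational combinations of the selectors. Assembling, $\mathrm{FH}_{\neg\rm B}$ is a countable intersection over $m$ of Borel sets, hence Borel.

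For the three isomorphism classes, I would invoke Corollary \ref{detail}, which gives clean characterizations \emph{within} $\mathrm{FH}_{\neg\rm B}$ in terms of the dimensions of the quotients $X/(X\cap C_m)$ and $(X\cap C_n)/(X\cap C_m)$. Each such dimension is either finite or $\aleph_0$, and the crucial observation is that these dimensions are Borel-computable: $\dim(X/(X\cap C_m))\geq d$ is equivalent to the existence of selectors $S_{k_1}(X),\ldots,S_{k_d}(X)$ whose images in the quotient are linearly independent, which unwinds to a Borel condition (nonvanishing of the relevant Gram-type determinant built from $\|\cdot\|_m$-values of rational combinations of the selectors). Thus ``$\dim(X/(X\cap C_m))<\infty$'' is the Borel statement $\exists d\,\forall(\text{selectors})$ that no $d{+}1$ of them are independent mod $C_m$, and $\langle\R^\N\rangle=\mathrm{FH}_{\neg\rm B}\cap\bigcap_m\{\dim(X/(X\cap C_m))<\infty\}$ is Borel. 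Similarly, $\langle\ell_2^\N\rangle$ is cut out by the Borel condition $\forall n\,\exists m\geq n\,(\dim((X\cap C_n)/(X\cap C_m))=\infty)$, and $\langle\ell_2\oplus\R^\N\rangle=\mathrm{FH}_{\neg\rm B}\setminus(\langle\R^\N\rangle\cup\langle\ell_2^\N\rangle)$ is then Borel as the complement within a Borel set.

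The main obstacle I anticipate is making the quotient computations genuinely Borel: one must verify that the quotient pseudonorm $\|f+(X\cap C_m)\|=\|f\|_m$ is well-behaved enough that both the parallelogram-law test (for Hilbertianness) and the independence-of-cosets test (for dimension counts) can be phrased purely in terms of $\|\cdot\|_m$ evaluated at rational linear combinations of the Borel selectors $S_k(X)$, without needing to ``pass to'' the quotient space as an object. The slightly delicate point is that $\|f\|_m=0$ exactly captures membership in $C_m$, so $\|\cdot\|_m$ descends to the quotient norm and all the relevant geometry is detectable at the level of $C(\R)$ through Borel functions of $X$; once this is set up, every condition becomes a countable Boolean combination of statements of the form ``$\|\sum a_i S_{k_i}(X)\|_m \bowtie c$'' for rationals $a_i,c$ and $\bowtie\in\{=,<,>\}$, each of which is Borel in $X$ by Lemma \ref{LemmaKuratowski-Ryll-Nardzewski}.
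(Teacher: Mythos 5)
Your overall architecture matches the paper's: split $\rm FH$ into the Hilbertian Banach spaces (Borel via Kwapie\'{n}/Bossard together with Theorem \ref{consistent}) and $\mathrm{FH}_{\neg\rm B}$, Borel-ize the right-hand side of \eqref{Eq.l2N.Description.0}, use Corollary \ref{detail} for $\langle\R^\N\rangle$ and $\langle\ell_2^\N\rangle$, and get $\langle\ell_2\oplus\R^\N\rangle$ by complementation. But two of your key steps are wrong as stated. First, the claimed equivalence ``$X\cap C_m\neq\{0\}$ iff some $S_k(X)$ lies in $C_m\setminus\{0\}$'' fails in the forward direction: the selectors $(S_k(X))_k$ are merely dense in $X$, while $X\cap C_m$ is typically a \emph{proper closed subspace} of $X$, hence nowhere dense in $X$, and a dense sequence can miss it entirely. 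Your fallback involving $\overline{B}_{n,r}(X)\not\subseteq C_m$ does not capture the condition either. The paper's fix is to apply the selectors to the closed set $X\cap C_m$ itself, i.e., to use that $X\mapsto X\cap C_m$ is Borel and test whether $S_n(X\cap C_m)\neq 0$ for some $n$.

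Second, and more seriously, ``Hilbertian'' means \emph{isomorphic} to a Hilbert space, whereas the parallelogram law characterizes norms \emph{induced by an inner product} --- an isometric condition. The quotient norm that $\|\cdot\|_m$ induces on $X/(X\cap C_m)$ has no reason to satisfy the parallelogram law even when that quotient is isomorphic to $\ell_2$, so your test carves out a strictly smaller set than $\mathrm{FH}_{\neg\rm B}$. The correct route, which the paper takes, is to observe that $X\mapsto X/C_m\in \SB(C(\R)/C_m)$ is Borel, that $X/(X\cap C_m)\cong X/C_m$, and that Hilbertianness is a Borel class in $\SB$ by Kwapie\'{n}'s theorem (type $2$ and cotype $2$), as recorded by Bossard. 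Your handling of the dimension conditions for $\langle\R^\N\rangle$ and $\langle\ell_2^\N\rangle$ is essentially sound and agrees in spirit with \eqref{Eq.26Oct24.2}, except that ``Gram-type determinants'' presuppose an inner product; linear independence of selector cosets modulo $C_m$ should instead be witnessed by a uniform lower bound on $\bigl\|\sum_i a_iS_{k_i}(X)\bigr\|_m$ over rational tuples with $\max_i|a_i|\geq 1$.
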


\begin{proof}
As $\mathrm{FH}$ is the union of the Hilbertian Banach spaces and $\mathrm{FH}_{\neg \rm B} $, it is enough to show that each of these sets are Borel. 
The coding for the Hilbertian Banach spaces in $\SB$ is well-known to be Borel (see \cite[Page 130]{Bossard2002});   this is a trivial consequence of Kwapie\'{n}'s theorem which says that a Banach space is isomorphic to a Hilbert space if and only if it has both type  and cotype 2. Hence, by Theorem \ref{consistent}, the  coding for the Hilbertian Fréchet spaces in $\SF$ is also Borel. So, we are left to compute the complexity of $\mathrm{FH}_{\neg \rm B} $.

Letting $Y=C(\R)$ in  Lemma \ref{Lemmal2NCharacterization}, we show that $\mathrm{FH}_{\neg \rm B} $ is Borel by showing  that  the conditions on the right-hand side of the equivalence in this lemma describe a  Borel subset of $\SF$. For that, let $(S_n\colon \SF\to C(\R))_n$ denote  Borel selectors given by Lemma \ref{LemmaKuratowski-Ryll-Nardzewski}. Then, letting $(C_m)_m$ be as in Lemma \ref{Lemmal2NCharacterization} for  $Y=C(\R)$,  we have that, for all $m\in\N$,  \begin{align} \label{Eq.26Oct24.1}
  X\cap C_m\neq \{0\}  \ \Leftrightarrow\ \exists n\in\N\ \text{ such that } \ S_n(X\cap C_m)\neq 0 .
\end{align} 
So, the first  condition in the right-hand side of \eqref{Eq.l2N.Description.0} is also Borel.
In order to show that the second condition in the right-hand side of \eqref{Eq.l2N.Description.0} is also Borel, notice that the map 
\[X\in \SF\mapsto X/C_m\in \SB(C(\R)/C_m)\]
is clearly Borel. Since $X/(X\cap C_m)\cong X/C_m$, this gives us a Borel realization for the assignment $X\mapsto X/(X\cap C_m)$. The result then follows since being Hilbertian is   a Borel property (see \cite[Page 130]{Bossard2002}). This concludes the proof that $\rm FH$ is Borel.

The fact that $\langle \R^\N\rangle $ and $\langle \ell_2^\N\rangle$ are Borel follows similarly with the extra help of Corollary \ref{detail}.  Precisely,  notice that   
\begin{align}\label{Eq.26Oct24.2}
   \dim(X/  (X\cap & C_m))<\infty \\
\Leftrightarrow\   &\exists  (n_1,\ldots,n_{k})\in \N^{<\infty},\ 
\forall \ell\in\N,\ \forall\eps\in \Q_+,\ \exists  (\lambda_1,\ldots,\lambda_k)\in \Q^{<\infty}  \notag \\
&\left\|S_\ell(X)- \sum_{i=1}^k\lambda_iS_{n_i}(X) \right\|_m\leq \eps \notag
\end{align}
for all $m\in\N$.
Since the  conditions in the right-hand side of  \eqref{Eq.26Oct24.2} are Borel and all quantifiers are countable, Corollary \ref{detail} together with the recently  proved fact  that $\rm FH\setminus \SB_{\rm F}$ is Borel imply  that $\langle \R^\N\rangle$ is Borel. The fact that $\langle \ell_2^\N\rangle$ is Borel follows analogously and we leave the details to the reader. 

Finally, since 
\[\mathrm{FH}_{\neg \rm B} =\langle \R^\N\rangle\sqcup\langle \ell_2^\N\rangle\sqcup \langle \ell_2\oplus \R^\N\rangle,\]
we conclude that $\langle \ell_2\oplus \R^\N\rangle$ must also be Borel.
\end{proof}

\subsection{The complexity of  $C(\R)$}

The result of this section bears similarity with the result that the isomorphism class of $C([0,1])$ in $\SB$
 is complete analytic.
\begin{theorem}\label{ThmComplexityCR}
    The isomorphism class of  $ C(\R)$ in $\SF$   is complete analytic.
\end{theorem}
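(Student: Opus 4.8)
The plan is to get the upper bound for free and then work to match it from below. By Lemma \ref{iso}, $\langle C(\R)\rangle=\{Y\in\SF\mid Y\cong C(\R)\}$ is analytic, so it suffices to prove that it is $\Sigma^1_1$-hard. I would do this by reducing the isomorphism class of $C[0,1]$ inside $\SB$ --- which is the complete analytic set referred to in the statement (recall $C(\Delta)\cong C[0,1]$) --- to $\langle C(\R)\rangle$ inside $\SF$. The candidate reduction is the Borel map $\Pi\colon\SB\to\SF$ of Lemma \ref{LemmaMapXtoXNBorel}, which satisfies $\Pi(X)\cong X^{\N}$. The whole argument rests on two facts: the classical isomorphism $C(\R)\cong C[0,1]^{\N}$ (a Milutin-type statement, since $C[0,1]^{\N}=C\big(\bigsqcup_{n}[0,1]\big)$ with the compact-open topology, and $\bigsqcup_{n}[0,1]$ is a non-compact, locally compact, $\sigma$-compact, separable metrizable space), and the key equivalence
\[ X^{\N}\cong C(\R)\ \Longleftrightarrow\ X\cong C[0,1],\qquad\text{for }X\in\SB. \]
Granting the key equivalence, one has $\Pi^{-1}(\langle C(\R)\rangle)=\langle C[0,1]\rangle$, so $\Pi$ is the desired Borel reduction and the theorem follows.

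The forward direction of the key equivalence is immediate: if $X\cong C[0,1]$, then $\Pi(X)\cong X^{\N}\cong C[0,1]^{\N}\cong C(\R)$. The content is the converse. Suppose $X^{\N}\cong C(\R)\cong C[0,1]^{\N}$; I must recover that $X$ itself is isomorphic to $C[0,1]$, and I would do this in three steps.

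The first step is the technical heart. Since $X$ is complemented in its own countable power, it is (isomorphic to) a \emph{normable} complemented subspace of the Fréchet space $C[0,1]^{\N}$; I claim it is then isomorphic to a complemented subspace of the single Banach space $C[0,1]$. Let $P\colon C[0,1]^{\N}\to C[0,1]^{\N}$ be a projection onto a copy of $X$, and let $\|\cdot\|_{m}$ be a pseudonorm inducing the (normable, hence Hausdorff) topology of that copy. By Lemma \ref{continuous} there are $M\geq m$ and $C>0$ with $\|Pz\|_{m}\leq C\|z\|_{M}$ for all $z$; since $\|\cdot\|_{m}$ is a genuine norm on the range of $P$, this gives $\ker\pi_{M}\subseteq\ker P$, so $P$ factors as $P=\bar P\circ\pi_{M}$ through $\pi_{M}\colon C[0,1]^{\N}\to C[0,1]^{M}$. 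Writing $\iota=\pi_{M}|_{X}$, which is an isomorphic embedding because $\|\cdot\|_{M}$ still induces the topology of $X$, one checks $\bar P\circ\iota=\mathrm{id}_{X}$, whence $\iota\bar P$ is a projection of $C[0,1]^{M}\cong C[0,1]$ onto a copy of $X$. The second step is bookkeeping on duals: $C[0,1]^{\N}$ has non-separable dual, so $X^{\N}$ does, and by Corollary \ref{CorollarySDandReflNotBorel} the Banach space $X$ must have non-separable dual. The third step invokes the classical dichotomy of Rosenthal: an infinite-dimensional complemented subspace of $C[0,1]$ either has separable dual or is isomorphic to $C[0,1]$. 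Combining the three steps forces $X\cong C[0,1]$, completing the converse.

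I expect the main obstacle to be the third step, which is the one genuinely deep external input: the reduction hinges on the structure theory of complemented subspaces of $C[0,1]$ through Rosenthal's dichotomy. The factoring argument of the first step --- promoting a Banach complemented subspace of a countable product to a complemented subspace of a single factor --- is where Lemma \ref{continuous} does real work, and the dual-separability bookkeeping is exactly where Corollary \ref{CorollarySDandReflNotBorel} enters. Finally, I would need to record a careful proof of the isomorphism $C(\R)\cong C[0,1]^{\N}$; although this is folklore, a clean argument (decomposing $\R$ into unit intervals and correcting the endpoint-matching constraint, or invoking a non-compact Milutin theorem) should be written out, since both directions of the key equivalence depend on it.
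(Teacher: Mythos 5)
Your proposal is correct, but it takes a genuinely different route from the paper's. The paper proves hardness entirely inside the Fréchet world: it constructs a Borel map $F\in\cF(\R)\mapsto C(F)\in\SF$ (extending functions affinely off $F$), shows via Milutin's theorem, the Pe{\l}czy\'{n}ski decomposition $C(\R)\cong C[0,1]^{\N}$ (Corollary \ref{CorDecomposition}), and the fact that $C[0,1]$ does not embed into $C(K)$ for $K$ countable compact, that $C(F)\cong C(\R)$ exactly when $F\setminus(-n,n)$ is uncountable for every $n$ (Propositions \ref{PropCFnotIsoCR} and \ref{PropCFIsomCR}), and then gets completeness from a Hurewicz-type theorem asserting that this family of closed sets is $\Sigma^1_1$-complete (Proposition \ref{PropFLargeSimga11Complete}). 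You instead import hardness from the Banach-space setting: you reduce $\langle C[0,1]\rangle\subseteq\SB$ --- whose complete analyticity is a known but nontrivial external result, which the paper alludes to but never uses --- through the product map $\Pi(X)\cong X^{\N}$ of Lemma \ref{LemmaMapXtoXNBorel}, with the crux being your equivalence $X^{\N}\cong C(\R)\Leftrightarrow X\cong C[0,1]$. Your verification of that equivalence is sound: the factoring of a normable complemented subspace of $C[0,1]^{\N}$ through a finite subproduct via Lemma \ref{continuous} closely parallels the paper's own factoring through $C_m$ in the proof of Proposition \ref{PropCFnotIsoCR} (you should still record, via Theorem \ref{ThmFréchetIsBanachIFFBoundedN}, why a single pseudonorm $\|\cdot\|_m$ restricts to a genuine norm generating the topology of the normable copy); the dual-separability transfer is exactly Corollary \ref{CorollarySDandReflNotBorel}, and it also rules out finite-dimensional $X$ before you invoke the dichotomy; and Rosenthal's dichotomy for complemented subspaces of $C[0,1]$ with non-separable dual is correctly stated and is a legitimate classical citation. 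As for what each approach buys: yours is shorter granted the Banach-space input, and it isolates the pleasant structural fact that $C[0,1]$ is the only Banach space whose countable power is isomorphic to $C(\R)$; but it rests on two deep external results (complete analyticity of $\langle C[0,1]\rangle$ in $\SB$, and Rosenthal's dichotomy), whereas the paper's argument is self-contained modulo classical $C(K)$-theory, needs only the easier non-embedding statement rather than the full dichotomy, and produces a reduction that never leaves the Fréchet setting.
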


We start with a simple well-known result about the embeddability of Banach spaces into Fréchet spaces. For lack of a precise reference and for the readers convenience, we include a proof below.

\begin{prop}
\label{PropBanachSpaceEmbFréchet} 
Let $(X,\|\cdot\|)$ be a Banach space,  $Y$ be a Fréchet space, and   $(\|\cdot\|_n)_n$ be an increasing sequence of pseudonorms on $Y$ generating its topology. If $T\colon X\to Y$ is an isomorphic embedding, then there are $L>0$ and $m\in\N$ such that 
\[\frac{1}{L}\|T(x)\|_m\leq \|x\|\leq L\|T(x)\|_m\]
for all $x\in X$.
\end{prop}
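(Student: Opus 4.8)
The plan is to apply Lemma \ref{continuous} twice, once to $T$ and once to its inverse, and then to align the two estimates on a common pseudonorm index. Since $X$ is a Banach space, its topology is generated by the constant sequence of pseudonorms $\|\cdot\|_{X,m}=\|\cdot\|$ (all equal to the norm). Thus Lemma \ref{continuous} applied to the continuous operator $T\colon X\to Y$ yields, for each $n\in\N$, a constant $K>0$ with $\|T(x)\|_n\le K\|x\|$ for all $x\in X$; in other words, every pseudonorm $\|\cdot\|_n$ of $Y$ is controlled by a multiple of $\|\cdot\|$ after composing with $T$.

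For the reverse estimate I would first observe that the image $T(X)$ is itself a Fréchet space. Indeed, an isomorphic embedding is by definition a linear homeomorphism onto its image, so $T\colon X\to T(X)$ is a topological isomorphism; since $X$ is complete, so is $T(X)$ with the subspace topology inherited from $Y$, and hence $T(X)$ is a closed subspace of $Y$ whose topology is generated by the restricted pseudonorms $(\|\cdot\|_n|_{T(X)})_n$. Now $T^{-1}\colon T(X)\to X$ is continuous, so Lemma \ref{continuous} applied to it (with the target pseudonorm of $X$ being the norm $\|\cdot\|$) produces some $m\in\N$ and $K'>0$ such that $\|T^{-1}(y)\|\le K'\|y\|_m$ for all $y\in T(X)$. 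Rewriting $y=T(x)$, this is precisely $\|x\|\le K'\|T(x)\|_m$.

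To finish, I would fix this particular index $m$ coming from the reverse estimate and feed it back into the forward estimate: applying the first paragraph with $n=m$ gives a constant $K''>0$ with $\|T(x)\|_m\le K''\|x\|$. Setting $L=\max\{K',K''\}$ then yields both inequalities $\tfrac{1}{L}\|T(x)\|_m\le\|x\|\le L\|T(x)\|_m$ simultaneously, for this single $m$.

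The only point requiring a little care — rather than a genuine obstacle — is justifying that $T(X)$ is a Fréchet space so that Lemma \ref{continuous} can be legitimately applied to $T^{-1}$; this rests on the completeness of $X$ together with the fact that an isomorphic embedding is a homeomorphism onto its image. Everything else is bookkeeping of constants, the key idea being that the two applications of Lemma \ref{continuous} must be coordinated so that the lower bound dictates the index $m$ and the upper bound is then obtained for that same $m$.
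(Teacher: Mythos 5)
Your proof is correct, and it shares its overall architecture with the paper's: both arguments first use continuity of $T^{-1}$ to produce the index $m$ and the estimate $\|x\|\leq L\|T(x)\|_m$, and then obtain the companion estimate $\|T(x)\|_m\leq K''\|x\|$ for that \emph{same} $m$ from continuity of $T$. Where you differ is in how the second estimate is obtained. The paper runs an ad hoc contradiction argument: if no constant worked, one picks $x_k\neq 0$ with $\|x_k\|\leq \tfrac{1}{k}\|T(x_k)\|_m$, normalizes so that $\|T(x_k)\|_m=1$, and observes that then $x_k\to 0$ in $X$ while $T(x_k)\not\to 0$ in $Y$, contradicting continuity of $T$. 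You instead invoke Lemma \ref{continuous} directly, exploiting the observation that since $X$ is Banach its topology is generated by the constant sequence of pseudonorms, so the source index in the lemma is irrelevant and the bound $\|T(x)\|_m\leq K''\|x\|$ holds for every target index, in particular for the $m$ already fixed. Your route is more economical, since it reuses machinery already established rather than redoing a sequential normalization argument (which is, in effect, a special case of the proof of Lemma \ref{continuous} itself). A further point in your favor: you justify that $T(X)$ is a closed, hence Fréchet, subspace of $Y$ (completeness of $X$ transported by the homeomorphism $T$), which is needed to apply Lemma \ref{continuous} to $T^{-1}$ as literally stated; the paper glosses over this detail.
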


\begin{proof}
    As the  inverse of $T$ is continuous, there are
       $L>0$ and $m\in\N$ such that 
\begin{equation}\label{Eq.BanachSubOfFréchet} \|x\|\leq L\|T(x)\|_m\end{equation}
for all $x\in X$. Suppose towards a contradiction that for all $k\in\N$ there is $x_k\in X\setminus \{0\}$ such that  
\[\|x_k\|\leq \frac{1}{k}\|T(x_k)\|_m.\]
As each $x_k$ is non zero, \eqref{Eq.BanachSubOfFréchet} implies the same for each $T(x_k)$ and, by $\|\cdot\|_m$-normalizing it, we can assume furthermore that $\|T(x_k)\|_m=1$ for all $k\in\N$.  So, $(x_k)_k$ is a sequence in $X$ converging to $0$ and $(T(x_k))_k$ is a sequence in $Y$ which does not converge to zero. As $T$ is continuous, this gives us a contradiction.\end{proof}

\begin{prop}\label{PropCFnotIsoCR}
    Let $F\subseteq \R$ be a countable closed subset and $K\subseteq \R$ be compact. Then $C(\R)$ does not isomorphically embed into $C(K\cup F)$.
\end{prop}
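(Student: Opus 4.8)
$\newcommand{\R}{\mathbb{R}}$The plan is to prove the contrapositive by analyzing the structure of $C(K \cup F)$ when $F$ is a countable closed set and $K$ is compact. The key observation is that $C(\R)$ contains, for instance, subspaces isomorphic to $\ell_2$ (e.g., via a sequence of disjointly supported normalized bumps living on compact sets, which span a copy of $c_0$, and by similar constructions one gets all $\ell_p$'s and indeed any separable Banach space since $C(\R)$ is isomorphically universal). So if $C(\R)$ embedded into $C(K \cup F)$, then $C(K \cup F)$ would have to be isomorphically universal for all separable Banach spaces as well, which forces strong conditions on $K \cup F$.

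The main structural fact I would exploit is that $K \cup F$ is a locally compact, $\sigma$-compact, separable metric space, and more importantly that $C(K \cup F)$ is itself a Fréchet space whose Banach-space building blocks are the spaces $C(L)$ for $L \subseteq K \cup F$ compact. Using Proposition \ref{PropBanachSpaceEmbFréchet}, any isomorphic embedding $T \colon C(\R) \to C(K \cup F)$ would be bounded below with respect to a single pseudonorm $\|\cdot\|_m$ on $C(K \cup F)$; the natural pseudonorms here are sup-norms over compact pieces $L_m$ of $K \cup F$. Thus the embedding factors (up to the kernel of $\|\cdot\|_m$) through a single Banach space $C(L_m)$, where $L_m = (K \cup F) \cap [\text{compact window}]$. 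Since $F$ is countable and closed, and $K$ is compact, each such compact window $L_m$ is a countable compact metric space union a compact set $K$ — crucially, the compact piece $L_m$ is a \emph{countable} compact space outside of $K$, so $C(L_m)$ is isomorphic (or close) to $C(K) \oplus C(\text{countable compact})$. By the classical Bessaga--Pełczyński / Mazurkiewicz--Sierpiński classification, $C$ of a countable compact metric space is isomorphic to some $C(\alpha)$ for a countable ordinal $\alpha$, and such spaces are $c_0$-saturated and do not contain $\ell_2$.

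The key steps, in order, are: (1) record that $C(\R)$ contains an isomorphic copy of $\ell_2$ (or more simply, any infinite-dimensional reflexive space); (2) suppose toward a contradiction that $T\colon C(\R) \to C(K\cup F)$ is an isomorphic embedding and apply Proposition \ref{PropBanachSpaceEmbFréchet} to obtain $L>0$ and $m \in \N$ with $\frac{1}{L}\|T(f)\|_m \le \|f\|_{F} $-comparable bounds — more precisely that $\|\cdot\|_m$ is equivalent to the $C(\R)$-norm under $T$ on the copy of $\ell_2$; (3) identify $\|\cdot\|_m$ as the sup-norm over a compact piece $L_m \subseteq K \cup F$, so that the restriction map lands the copy of $\ell_2$ isomorphically into $C(L_m)$; (4) observe $L_m$ is the union of a compact set (inside $K$) and a countable compact set (the part meeting $F$), and invoke that $C(L_m)$, being essentially $C$ of a scattered-plus-compact space, is $c_0$-saturated and hence contains no copy of $\ell_2$ — contradiction.

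The hard part will be step (4): pinning down precisely why $C(L_m)$ cannot contain $\ell_2$. If $K$ itself is an arbitrary compact subset of $\R$, then $C(K)$ need not be $c_0$-saturated (e.g.\ $K = [0,1]$ gives $C[0,1]$, which contains $\ell_2$). So the argument as sketched is too crude — I must use more than a single pseudonorm. The genuine point must instead be that $C(\R)$ is \emph{not} contained in any single $C(L_m)$ because $C(\R)$ has infinite-dimensional "independent directions at infinity," whereas Proposition \ref{PropBanachSpaceEmbFréchet} confines the image to depend on only finitely many of the coordinate windows $[-n,n]$. Concretely, I expect the real obstacle to be showing that the embedding being controlled by a single $\|\cdot\|_m = \sup_{L_m}$ collapses infinitely many independent coordinates of $C(\R)$ (functions supported far out, beyond $L_m$) to zero, violating injectivity or the lower bound; the countability of $F$ and compactness of $K$ guarantee $L_m$ is a \emph{proper} bounded piece, so there exist nonzero $f \in C(\R)$ supported outside any preimage window with $\|T(f)\|_m$ small, contradicting the lower bound from Proposition \ref{PropBanachSpaceEmbFréchet}.
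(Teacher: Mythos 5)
There is a genuine gap, and you partly diagnosed it yourself. Your first argument fails exactly as you say: a single pseudonorm window $L_m=(K\cup F)\cap[-m,m]$ may contain all of $K$, and since $K$ can be $[0,1]$, the Banach space $C(L_m)$ can perfectly well contain $\ell_2$, so no contradiction arises. But the pivot you propose in its place does not work either. Proposition \ref{PropBanachSpaceEmbFréchet} applies only to embeddings whose \emph{domain} is a Banach space; for the full map $T\colon C(\R)\to C(K\cup F)$ there is no single $m$ and no lower bound of the kind you invoke --- the absence of such a uniform lower bound is precisely what non-normability of $C(\R)$ means, and Lemma \ref{continuous} only gives, for each pseudonorm index $n$ on the domain, some $m(n)$ on the target. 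Moreover, $T$ is an abstract isomorphic embedding, not a restriction operator, so there is no reason that a function $f$ supported far outside some window should have $\|T(f)\|_m$ small; and even if $\|T(f)\|_m$ were $0$ for some nonzero $f$, that contradicts nothing, since $\|\cdot\|_m$ is merely a pseudonorm on $C(K\cup F)$ and injectivity of $T$ concerns the full topology, not one pseudonorm.

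The missing idea is to separate the compact part from the countable part \emph{before} applying Proposition \ref{PropBanachSpaceEmbFréchet}, and to apply boundedness to the component taking values in the Banach space $C(K)$ rather than to $T$ itself. This is what the paper does: view $T$ as a map into $C(K)\oplus C(F)$, with components $T_1\colon C(\R)\to C(K)$ and $T_2\colon C(\R)\to C(F)$. Since $C(K)$ is a Banach space, continuity of $T_1$ yields a single $m$ and $c>0$ with $\|T_1(f)\|\leq c\|f\|_m$ for all $f$; hence $T_1$ vanishes identically on $C_m=\{f\in C(\R)\mid \|f\|_m=0\}$, the space of functions vanishing on $[-m,m]$. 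So on $C_m$ the map $T$ coincides with $T_2$, i.e., $T$ restricted to $C_m$ is an isomorphic embedding into $C(F)$. Now take a copy of $C[0,1]$ (or of your $\ell_2$) inside $C_m$, supported in, say, $[m+1,m+2]$; this \emph{is} a Banach space, so Proposition \ref{PropBanachSpaceEmbFréchet} applies and places it isomorphically inside $C(F\cap[-n,n])$ for some $n$. Since $F\cap[-n,n]$ is a countable compact metric space, $C(F\cap[-n,n])$ contains no copy of $C[0,1]$ (nor of $\ell_2$), by the classical result the paper cites; contradiction. Your intuition about ``functions supported far out'' is the right germ --- they span $C_m$ --- but it must be played against the $C(K)$-component, which they annihilate, rather than against $T$ itself.
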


\begin{proof}
    Suppose towards a contradiction that $C(\R)$  isomorphically embeds into $C(K\cup F)$. Hence, since the latter isomorphically embeds into $C(K)\oplus C(F)$, we can fix an isomorphic embedding $T\colon C(\R)\to C(K)\oplus C(F)$. Let $T_1\colon C(\R)\to C(K)$ and $T_2\colon C(\R)\to C(F)$ be the compositions of $T$ with the projections of $C(K)\oplus C(F)$ onto $C(K)$ and $C(F)$, respectively. Notice that there is $m\in\N$ such that $T_2\restriction C_m$ is an isomorphic embedding, where
    \[C_m=\{f\in C(\R)\mid \|f\|_m=0\}.\] 
    Indeed, as $T_1$ is bounded, there is $K>0$ and $m\in\N$ such that 
    \[\|T_1(f)\|\leq K\|f\|_m\]
    for all $f\in C(\R)$. So, $T_1\restriction C_m=0$. Hence,  $T_2\restriction C_m=T\restriction C_m$ and $T_2\restriction C_m$ is an isomorphic embedding. 

    As $C[0,1]$ isomorphically embeds into $C_m$, there must be an isomorphic embedding of $C[0,1]$ into $C(F)$. Let $V\colon C[0,1]\to C(F)$ be such an embedding.  As $C[0,1]$ is a Banach space, it follows from Proposition \ref{PropBanachSpaceEmbFréchet} that there is $n\in\N$ such that and $L>0$ such that 
    \[\frac{1}{L}\|V(f)\|_n\leq \|f\|\leq L\|V(f)\|_n\]
    for all $f\in C[0,1]$. But then $C[0,1]$ is isomorphic to a subspace of $C(F\cap [-n,n])$ and this cannot happen since $F$ is countable  (see \cite[Theorem 2.14]{RosenthalHandBookCK}). 
     \end{proof}

 The next result is a  standard result in Banach space theory whose proof works perfectly in the Fréchet space scenario as well, see \cite[Theorem 2.2.3]{AlbiacKaltonBook} (or \cite{Pelczynski1960Studia} for its first appearance).

\begin{theorem}[The Pełczynski decomposition technique]
Let $X$ and $Y$ be Fréchet spaces isomorphic to complemented subspaces of each other. Suppose $X$ is isomorphic to $X^\N$. Then $X$ and $Y$ are isomorphic to each other.\label{ThmDecomposition}
\end{theorem}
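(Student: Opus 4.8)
The plan is to run the classical Pełczyński--Eilenberg ``swindle,'' replacing the $\ell_p$- or $c_0$-sums used in the Banach setting by countable products, which in the Fréchet category are again Fréchet and play the role of the direct sum. The first step is to translate the hypotheses into product decompositions. A complemented closed subspace is the image of a continuous projection, whose kernel is a closed complement; since closed subspaces of Fréchet spaces are Fréchet and the map $y\mapsto(Py,\,y-Py)$ is an isomorphism by the open mapping theorem, the assumption that $X$ is isomorphic to a complemented subspace of $Y$ yields a Fréchet space $A$ with $Y\cong X\times A$, and symmetrically a Fréchet space $B$ with $X\cong Y\times B$. Here $\times$ denotes the finite product. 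I will use freely the elementary reindexing isomorphisms $X^\N\cong X\times X^\N$ and $(Y\times B)^\N\cong Y^\N\times B^\N$, together with associativity and commutativity of finite and countable products; each of these is a permutation of coordinates, hence a linear homeomorphism of Fréchet spaces.

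The heart of the argument is to establish the two absorption identities
$$Y\cong X\times Y \qquad\text{and}\qquad X\cong X\times Y,$$
from which $X\cong X\times Y\cong Y$ follows immediately. For the first, I start from $Y\cong X\times A$, substitute the hypothesis $X\cong X^\N$, and peel off one coordinate:
$$Y\cong X^\N\times A\cong X\times\bigl(X^\N\times A\bigr)\cong X\times Y,$$
the last step because $X^\N\times A\cong X\times A\cong Y$. For the second, I expand $X$ using both $X\cong X^\N$ and $X\cong Y\times B$:
$$X\cong X^\N\cong(Y\times B)^\N\cong Y^\N\times B^\N\cong Y\times\bigl(Y^\N\times B^\N\bigr)\cong Y\times X,$$
where $Y^\N\times B^\N\cong(Y\times B)^\N\cong X^\N\cong X$. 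Combining the two identities gives the theorem.

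I do not expect a genuinely hard analytic step: the entire content is the formal chain of isomorphisms above, and the argument succeeds precisely because the Fréchet category is well behaved for these manipulations. The two points I would state with care are therefore structural rather than computational: (i) that ``isomorphic to a complemented subspace'' really produces a product decomposition with a \emph{Fréchet} complement, which rests on the complement being a closed (hence Fréchet) subspace and on the open mapping theorem for Fréchet spaces; and (ii) that the countable product $X^\N$ is a Fréchet space and that every coordinate-reindexing map is an isomorphism of Fréchet spaces, so that each displayed identity is legitimate. Once these are recorded, the proof is purely formal.
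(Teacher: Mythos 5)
Your proof is correct and is exactly the argument the paper has in mind: the paper omits the proof, citing the classical Banach-space swindle (Albiac--Kalton, Theorem 2.2.3) with the remark that it "works perfectly in the Fréchet space scenario," and your write-up is precisely that swindle with countable products $X^\N$ playing the role of the $c_0$- or $\ell_p$-sums, plus the correct structural remarks (closed complements via continuous projections and the open mapping theorem, and coordinate-reindexing isomorphisms of products).
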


\begin{cor}\label{CorDecomposition}
    Let $F\in \{\R,[0,\infty),(-\infty,0]\}$. Then $C(F)$ is isomorphic to $C[0,1]^\N$.
\end{cor}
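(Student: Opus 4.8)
The goal is to show $C(F) \cong C[0,1]^\N$ for $F \in \{\R, [0,\infty), (-\infty,0]\}$. The natural strategy is to invoke the Pełczynski decomposition technique (Theorem \ref{ThmDecomposition}), which requires three ingredients: that $C(F)$ is isomorphic to a complemented subspace of $C[0,1]^\N$, that $C[0,1]^\N$ is isomorphic to a complemented subspace of $C(F)$, and that one of the two spaces — here I would use $X = C[0,1]^\N$ — satisfies $X \cong X^\N$.

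\textbf{The self-reproducing property.} First I would verify that $C[0,1]^\N \cong (C[0,1]^\N)^\N$. This is the easiest step: $(C[0,1]^\N)^\N$ is a countable product of copies of $C[0,1]$ indexed by $\N \times \N$, and since $\N \times \N$ is in bijection with $\N$, re-indexing gives a topological linear isomorphism onto $C[0,1]^\N$. This is purely formal and uses only that countable products of Fréchet spaces are Fréchet (already recalled in the excerpt).

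\textbf{The two complementation embeddings.} The substantive content is exhibiting each space as a complemented subspace of the other. To see $C[0,1]^\N$ is complemented in $C(F)$, I would use the standard geometry of $F$: for $F = \R$ (the other two cases are analogous, restricting to half-lines), I would choose a sequence of disjoint compact intervals $[a_n, b_n] \subseteq \R$ together with a linear extension/restriction scheme, using partitions of unity or explicit piecewise-linear gluing, so that functions on $[0,1]$ supported near each interval assemble into a complemented copy of $C[0,1]^\N$ inside $C(\R)$; the complementation comes from a bounded projection that restricts a function to each interval and extends by the appropriate affine pieces. Conversely, to see $C(\R)$ is complemented in $C[0,1]^\N$, I would note that $\R = \bigcup_n [-n,n]$ is a $\sigma$-compact exhaustion, and the restriction maps $f \mapsto (f\restriction [-n,n])_n$ realize $C(\R)$ as a closed subspace of $\prod_n C[-n,n] \cong C[0,1]^\N$; the compatibility conditions (agreement on overlaps) cut out a complemented subspace via an averaging or extension projection. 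In fact, once the first embedding is in place, one may be able to shortcut the second by observing that $C(\R)$ itself is isomorphic to a product of $C[0,1]$'s over the exhaustion, making the complementation transparent.

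\textbf{Main obstacle.} The delicate point is constructing genuine \emph{complemented} embeddings rather than mere isomorphic embeddings: one must produce explicit bounded linear projections, and verify boundedness in the Fréchet topology using the pseudonorm criterion of Lemma \ref{continuous}. For the half-line cases $F = [0,\infty)$ and $(-\infty,0]$, I would reflect or translate to reduce to the full-line argument, but some care is needed because the relevant exhaustion by compact sets differs. Assembling these pieces and feeding them into Theorem \ref{ThmDecomposition} then yields $C(F) \cong C[0,1]^\N$, which is the desired conclusion.
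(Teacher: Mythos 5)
Your plan is essentially the paper's own proof: Pe\l czynski decomposition with $C[0,1]^\N \cong (C[0,1]^\N)^\N$, a complemented copy of $C[0,1]^\N$ inside $C(\R)$ obtained from disjoint compact intervals with affine gluing and the projection $f\mapsto$ (extension of $f$ restricted to the intervals), and a complemented copy of $C(\R)$ inside a countable product of $C[0,1]$'s. The only (cosmetic) difference is in the second embedding, where the paper partitions $\R$ into the intervals $[n,n+1]$, $n\in\Z$, and gets the exact product decomposition $C[0,1]^\Z \cong V\times C(\R)$, with $V$ the space of endpoint-correction constants $(\lambda_n)_{n\in\Z}$, $\lambda_0=0$ --- slightly cleaner than cutting the coherent sequences of your nested exhaustion $([-n,n])_n$ out by a correction projection, though that also works.
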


\begin{proof}
Suppose $F=\R$, the other cases will follow analogously. Since  $C[0,1]^\N$ is clearly isomorphic to $(C[0,1]^\N)^\N$, we only need to show that $C(\R)$ and  $C[0,1]^\N$ are isomorphic to complemented subspaces of each other (Theorem \ref{ThmDecomposition}). For each $n\in\N$, let $T_n\colon C[0,1]\to C[2n,2n+1]$ be an isomorphism and set \[E=\bigsqcup_{n\in\N} [2n,2n+1].\] Then, identifying each $C[2n,2n+1]$ as a subspace of $C(E)$ in the canonical way, the map \[(f_n)_{n\in\N}\in C[0,1]^\N\mapsto \sum_{n\in\N} T(f_n)\in C(E)\]
is an isomorphism. Let $V\colon   C(E)\to C(\R)$ be the map sending each element $f$ in $C(E)$ to its  extension which is affine on each segment of the form $[2n+1,2n+2]$, $n\in\N$, and such that $f(x) = f(0)$ for $x < 0$. Then $V$ is an isomorphic embedding and letting $\chi_E$ be the characteristic function of $E$, the map \[f\in C(\R)\to V(\chi_{E}f)\in C(\R)\] is a continuous projection of $C(\R)$ onto the image of $V$. This shows $C[0,1]^\N$ is   isomorphic to a complemented subspace of $C(\R)$.

On the other hand, let $V$ be the subspace of $\R^\Z$ of families of real numbers indexed by $\Z$ whose coordinate of index $0$ is $0$. Moreover, for each $n\in\N$, let $h_n\colon [0,1]\to [n,n+1]$ be an increasing homeomorphism. Then, the map 
\[\left((\lambda_n)_{n\in\N},f\right)\in V\times C(\R)\mapsto \left(\lambda_n+f\circ h_n\right)_{n\in\Z}\in C[0,1]^\Z\]
is an isomorphism. So, $C(\R)$ is isomorphic to a complemented subspace of $C[0,1]^\Z$. Since the latter is clearly isomorphic to $C[0,1]^\N$, we are done.
\end{proof}

\begin{prop}\label{PropCFIsomCR}
    Let $F\subseteq \R$ be a closed subset such that $F\setminus [-n,n]$ is uncountable for all $n\in\N$. Then $C(F)$ is isomorphic to $C(\R)$.
\end{prop}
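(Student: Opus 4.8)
The plan is to apply the Pełczynski decomposition technique (Theorem \ref{ThmDecomposition}) to the pair $X=C(\R)$ and $Y=C(F)$. By Corollary \ref{CorDecomposition} we have $C(\R)\cong C[0,1]^\N$, and since $C[0,1]^\N\cong (C[0,1]^\N)^\N$, it follows that $C(\R)\cong C(\R)^\N$; thus the hypothesis $X\cong X^\N$ of Theorem \ref{ThmDecomposition} holds for $X=C(\R)$. It therefore suffices to show that $C(\R)$ and $C(F)$ are isomorphic to complemented subspaces of each other, after which Theorem \ref{ThmDecomposition} immediately gives $C(F)\cong C(\R)$.

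The first (easy) direction is that $C(F)$ is complemented in $C(\R)$. First I would build a continuous linear extension operator $E\colon C(F)\to C(\R)$: on each bounded component $(a,b)$ of $\R\setminus F$ let $E(f)$ be affine between $f(a)$ and $f(b)$, and on an unbounded component extend $f$ as a constant. The restriction map $R\colon C(\R)\to C(F)$ then satisfies $R\circ E=\mathrm{Id}$, so $E\circ R$ is a continuous projection onto $E(C(F))$, exhibiting $C(F)$ as a complemented subspace. The only point to verify is continuity of $E$ in the compact-open topologies; this holds because affine interpolation does not increase sup-bounds and, for each $n$, only finitely many gaps meeting $[-n,n]$ have an endpoint outside $[-n,n]$, so $\|E(f)\|_n$ is dominated by $\|f\|_{F,n'}$ for a suitable $n'$.

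The harder direction is that $C(\R)$ is complemented in $C(F)$. Since $C(\R)\cong C[0,1]^\N$, it is enough to complementably embed $C[0,1]^\N$ into $C(F)$. I would invoke the Cantor--Bendixson theorem to write $F=P\cup D$ with $P$ perfect and $D$ countable; the hypothesis that $F\setminus[-n,n]$ is uncountable for all $n$ forces $P$ to be unbounded, for otherwise $F$ outside a large interval would be countable. Using that nonempty relatively open subsets of a perfect set are uncountable, I would choose $x_j\in P$ with $|x_j|\to\infty$ and pairwise disjoint bounded open intervals $V_j\ni x_j$, chosen so that $\{V_j\}_j$ is locally finite (each compact meets only finitely many) and each $\overline{V_j}\cap P$ contains a Cantor set $D_j\subseteq F\cap V_j$. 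As each $D_j$ is compact, Milutin's theorem gives $C(D_j)\cong C[0,1]$, hence $\prod_j C(D_j)\cong C[0,1]^\N$. The restriction $R\colon C(F)\to\prod_j C(D_j)$, $f\mapsto (f|_{D_j})_j$, is continuous. For a section I would extend each $g_j\in C(D_j)$, via Tietze and a Urysohn cutoff, to $\tilde g_j\in C(F)$ with $\tilde g_j|_{D_j}=g_j$, $\|\tilde g_j\|_\infty\le\|g_j\|_{D_j}$, and support in $F\cap V_j$; then $S((g_j)_j)=\sum_j \tilde g_j$ is well defined and continuous, since local finiteness of $\{V_j\}_j$ makes the sum locally finite, and $R\circ S=\mathrm{Id}$. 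Thus $S\circ R$ is a continuous projection of $C(F)$ onto a copy of $\prod_j C(D_j)\cong C[0,1]^\N\cong C(\R)$.

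Combining the two directions with Theorem \ref{ThmDecomposition} yields $C(F)\cong C(\R)$. I expect the main obstacle to be this second direction: producing the escaping, mutually separated Cantor sets $D_j$ inside $F$ (where the uncountability hypothesis and Cantor--Bendixson enter) and, more delicately, arranging the extension operator so that its supports are locally finite, which is precisely what guarantees that the section $S$ lands in $C(F)$ and is continuous for the compact-open topology.
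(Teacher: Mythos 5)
Your argument is correct, but it takes a genuinely different route from the paper's. The paper does not apply the Pełczynski technique to the pair $\big(C(\R),C(F)\big)$ at all: it decomposes $F$ itself as a locally finite disjoint union of compact uncountable pieces together with at most two closed half-lines contained in $F$, so that $C(F)\cong\prod_i C(F_i)$, and then identifies each factor --- $C[0,1]$ for the compact uncountable pieces by Milutin's theorem, $C[0,1]^\N$ for the half-lines by Corollary \ref{CorDecomposition} --- to get $C(F)\cong C[0,1]^\N\cong C(\R)$ directly (Pełczynski enters only inside Corollary \ref{CorDecomposition}). You instead run a Schroeder--Bernstein argument on complemented subspaces: the easy direction uses the same canonical affine extension operator that the paper employs in the proof of Theorem \ref{ThmComplexityCR}, and the harder direction uses Cantor--Bendixson to plant a locally finite, escaping sequence of Cantor sets $D_j\subseteq F$, yielding a complemented copy of $\prod_j C(D_j)\cong C[0,1]^\N$; the local-finiteness bookkeeping you flag is indeed the only delicate point, and your treatment of it is sound. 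Your route is longer but buys some robustness: you only need a complemented copy of $C[0,1]^\N$ \emph{somewhere} inside $C(F)$, whereas the paper's global decomposition of $F$ into uncountable compact blocks requires extra care for sets such as $F=(-\infty,0]\cup\N$, whose countable clopen part cannot be absorbed into uncountable compact pieces as literally stated (the conclusion still holds there, but via a small additional isomorphism like $C[0,1]\times\R\cong C[0,1]$). The paper's route is shorter and yields the stronger structural statement $C(F)\cong\prod_i C(F_i)$ explicitly; both ultimately rest on Milutin's theorem and Corollary \ref{CorDecomposition}.
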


\begin{proof}
   Suppose $F\neq \R$, otherwise the conclusion is trivial. Then, by the hypothesis on $F$, we can write $F=\bigsqcup_{i=1}^\infty F_i$ where each $F_i$, for $i\geq 3$, is a compact uncountable subset of $\R$, $F_1$ is either $\emptyset$ or of the form $[\lambda_+,\infty)$ for some $\lambda_+\in \R$, and $F_2$ is either $\emptyset$ or of the form $( \infty,\lambda_-]$ for some $\lambda_-\in \R$. So, $C(F)$ is isomorphic to the Fréchet-sum $\prod_iC(F_i)$. It follows from classical Banach space theory that each $C(F_i)$, for $i\geq 3$, is isomorphic to $C[0,1]$ (see \cite[Theorem 4.4.8]{AlbiacKaltonBook}). Moreover, by Corollary \ref{CorDecomposition}, each of $C(F_1)$ and $C(F_2)$ is either isomorphic to $\{0\}$ or to $  C[0,1]^\N$. Either way, we conclude that $C(F)$ is isomorphic to $ C[0,1]^\N$. By Corollary \ref{CorDecomposition}, $C(F)$ is isomorphic to $C(\R)$ as desired. 
\end{proof}

The next proposition follows the idea of a theorem of Hurewicz, see \cite[Theorem 27.5]{KechrisBook1995}.

\begin{prop}\label{PropFLargeSimga11Complete}
  Let $E\subseteq \R$ be a closed subset such that  $E\setminus (-n,n)$ is uncountable for all $n\in\N$. The set 
    \[\left\{F\in \cF(E)\mid \forall n\in\N,\ F\setminus (-n,n)\text{ is uncountable}\right\}\]
    is $\Sigma^1_1$-complete.
\end{prop}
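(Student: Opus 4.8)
The plan is to prove $\Sigma^1_1$-completeness in two steps: first show the set is analytic, then exhibit a Borel reduction from a known $\Sigma^1_1$-complete set. As the proposition's preamble suggests following Hurewicz's theorem (\cite[Theorem 27.5]{KechrisBook1995}), the natural source for the reduction is the set of \emph{uncountable} closed subsets of a perfect Polish space, which is $\Sigma^1_1$-complete; equivalently, one works with the relation ``$F$ is uncountable'' characterized by ``$F$ contains a nonempty perfect subset'' (by the perfect set property for analytic sets). The condition ``$F\setminus(-n,n)$ is uncountable for all $n$'' is a countable intersection of such conditions, so upper-bounding the complexity should be routine.

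For the analyticity (upper bound), I would first observe that for a closed set $A\subseteq\R$, being uncountable is equivalent to containing a nonempty perfect set, and the relation $\{(F,C)\in\cF(E)\times\cF(E)\mid C\subseteq F,\ C\text{ perfect and nonempty}\}$ is Borel in the Effros structure (perfection is a Borel condition on closed sets, and $C\subseteq F$ is Borel by \cite[Exercise 12.11.ii]{KechrisBook1995}). Hence ``$F\setminus(-n,n)$ is uncountable'' $\Leftrightarrow$ ``$\exists C\in\cF(E)$ nonempty perfect with $C\subseteq \overline{F\setminus(-n,n)}$'', which is analytic after checking that $F\mapsto \overline{F\setminus(-n,n)}$ is Borel. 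Intersecting over $n\in\N$ keeps the set analytic, giving the $\Sigma^1_1$ upper bound.

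For the hardness, the idea is to reduce from the $\Sigma^1_1$-complete set $\mathrm{UC}=\{K\in\cF(2^\N)\mid K\text{ is uncountable}\}$ (equivalently, $K$ has a nonempty perfect subset). By hypothesis $E\setminus(-n,n)$ is uncountable for every $n$, so I can carve out, inside each tail of $E$, a homeomorphic copy of the Cantor set; concretely, fix a sequence $n_1<n_2<\cdots$ and, using that each slab $E\cap[n_j,n_{j+1}]$ (or a suitable tail piece) contains a Cantor set, fix continuous injections $\theta_j\colon 2^\N\to E$ with pairwise ``spread-out'' images whose $j$-th image lies outside $(-n_j,n_j)$. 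Given $K\in\cF(2^\N)$, I would define $\Phi(K)=\overline{\bigcup_j \theta_j(K)}$ (or a disjoint-union variant arranged so the closure adds only controlled points). The reduction must be arranged so that $\Phi(K)$ meets every tail $E\setminus(-n,n)$ in an uncountable set precisely when $K$ is uncountable: if $K$ is uncountable then each $\theta_j(K)$ is an uncountable subset outside $(-n_j,n_j)$, forcing every tail to be uncountable; conversely if $K$ is countable then each $\theta_j(K)$ is countable and, with care that the closure operation and any limit points form a countable set, the tails stay countable.

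The main obstacle I anticipate is twofold and both difficulties are geometric rather than descriptive-set-theoretic. First, I must ensure the map $K\mapsto\Phi(K)$ is Borel as a map into $\cF(E)$; this requires expressing ``$\Phi(K)\cap U\neq\emptyset$'' in terms of Borel conditions on $K$, which should follow once the $\theta_j$ are fixed continuous maps and the union/closure is handled via the Kuratowski--Ryll-Nardzewski selectors (Lemma \ref{LemmaKuratowski-Ryll-Nardzewski}). Second, and more delicate, is guaranteeing that when $K$ is countable the resulting $\Phi(K)$ has \emph{all} tails countable — the closure $\overline{\bigcup_j\theta_j(K)}$ could in principle introduce accumulation points from the infinitely many pieces. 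I would control this by placing the images $\theta_j(K)$ in disjoint compact slabs marching off to $+\infty$ so that their union is already closed in $E$ (no cross-piece limit points), making $\Phi(K)=\bigsqcup_j\theta_j(K)$ genuinely closed and its cardinality in each tail exactly that of a tail of the disjoint union, which is countable iff $K$ is. This reduces everything to the single clean equivalence ``$K$ uncountable $\Leftrightarrow$ every tail of $\Phi(K)$ uncountable,'' completing the reduction.
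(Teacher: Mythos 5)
Your proposal is correct and takes essentially the same approach as the paper: the upper bound via the perfect-subset characterization of uncountability (with your closure $\overline{F\setminus(-n,n)}$ being redundant, since $F\setminus(-n,n)$ is already closed), and hardness by spreading witnesses of uncountability across disjoint closed uncountable pieces of $E$ that escape every $(-n,n)$, with the countable case controlled exactly as you say by local finiteness of the union. The only cosmetic difference is that you reduce from the single complete set $\{K\in\cF(2^\N)\mid K\text{ is uncountable}\}$ via fixed embeddings $\theta_j$ of $2^\N$ into the pieces, whereas the paper reduces an arbitrary analytic $B\subseteq Z$ by invoking Hurewicz's theorem separately in each piece $E_n$ to get Borel maps $\varphi_n\colon Z\to\cF(E_n)$ and setting $\varphi(z)=\bigcup_n\varphi_n(z)$; these are interchangeable formulations of the same reduction.
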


\begin{proof}
   Let $A$ denote the set in the proposition. First notice that $A$ is analytic. For that, notice that 
   \[\cP=\{H\in \cF(E)\mid H\text{ if perfect}\}\] 
   is Borel (see \cite[Theorem 27.5]{KechrisBook1995}\footnote{The proof that $\cP$ is Borel is explicitly done inside the proof of this theorem.}). Then, since $F\in \cF(E)$ is uncoutable if and only if there is $H\subseteq F$ perfect and non empty (see \cite[Theorem 6.4]{KechrisBook1995}), we have that 
   \[F\in A\ \Leftrightarrow\ \forall n\in\N,\ \exists H\in \cP,\ H\subseteq F\setminus (-n,n).\]
  So, $A$ is analytic. 

  Let us now notice that $A$ is complete analytic. I.e., we must show that if $Z$ is a Polish space and $B\subseteq Z$ is analytic, then there exists a Borel map $\varphi\colon Z\to \cF(E)$ such that $\varphi^{-1}(A)=B$. By the hypothesis on $E$, we can pick a disjoint sequence $(E_n)_n$ of closed subsets of $E$ such that each $E_n$ is an uncountable subset of $E\setminus (-n,n)$. Notice that, for each $n\in\N$, 
\[\{F\in \cF(E_n)\mid F\text{ is uncountable}\}\]
is $\Sigma^1_1$-complete (see \cite[Theorem 27.5]{KechrisBook1995}). Hence,  for each $n\in\N$, we can pick a Borel map $\varphi_n\colon Z\to \cF(E_n)$ such that 
\[\varphi^{-1}_n(\{F\in \cF(E_n)\mid F\text{ is uncountable}\})=B.\]
Then the map 
\[\varphi\colon z\in Z\mapsto \bigcup_n\varphi_n(z)\in \cF(E)\]
is a Borel map with the desired property.
\end{proof}

\begin{proof}[Proof of Theorem \ref{ThmComplexityCR}]
  For each $F\in \cF(\R)$, there is a canonical embedding of $C(F)$ into $C(\R)$ by simply extending each of the functions in $C(F)$ in an affine way outside $F$. Precisely, if $(a,b)$ is an interval in $\R\setminus F$ maximal with respect to inclusion, then $a,b\in F$ and we extend functions $f$ in $C(F)$ to $(a,b)$ by linearly connecting $(a,f(a))$ to $(b,f(b))$. If $(a,\infty)\subseteq  \R\setminus F$  is maximal with respect to inclusion, then $a\in F$ and we extend functions $f$ in $C(F)$ by letting $f$ be equal to $f(a)$ in $(a,\infty)$. Finally, if $(-\infty,a)\subseteq \R\setminus F$ is maximal with respect to inclusion, then $a\in F$ and we extend functions $f$ in $C(F)$ by letting $f$ be equal to $f(a)$ in $(-\infty,a)$. For simplicity, for each $F\in \cF(\R)$, let us  identify $C(F)$ with this subspace of $C(\R)$. In particular, each $C(F)$ is in $\SF$.
  
  Let us notice that the map 
    \[\varphi\colon F\in \cF(\R)\mapsto C(F)\in \SF\]
    is Borel. For that, let $(\|\cdot\|_n)_n$ be the standard sequence of pseudonorms on $C(\R)$. Let $f\in C(\R)$,   $n\in\N$, $\eps>0$, and set
    \[U(f,n,\eps)=\{g\in C(\R)\mid \|f-g\|_n<\eps\}.\]
    Notice that 
    \[\varphi^{-1}\left(\{X\in \SF\mid X\cap U(f,n,\eps)\neq \emptyset\}\right)=\{F\in \cF(\R)\mid C(F)\cap U(f,n,\eps)\neq \emptyset\}\]
    is Borel. For that, let $(S_n\colon \cF(\R)\to \R)_n$ be a sequence of Borel maps given by Lemma \ref{LemmaKuratowski-Ryll-Nardzewski}, so, $(S_n(F))_n$ is  dense in $ F$ for all $F\in \cF(\R)$. Given  $\bar n=(n_1,\ldots,n_k)\in \N^{<\infty}$ and  $   \bar\alpha=(\alpha_1,\ldots, \alpha_k)\in \Q^{<\infty}$, let $g_{F,\bar n,\bar \alpha}$ be the function in $C(\{S_{n_1}(F),\ldots, S_{n_k}(F)\})$ given by 
    \[g_{F,\bar n,\bar \alpha}(S_{n_i}(F))=\alpha_i\]
    for all $i\in \{1,\ldots, k\}$. Under the identification of  $C(\{S_{n_1}(F),\ldots, S_{n_k}(F)\})$ as a subspace of  $ C(\R)$ described above,  we see $g_{F,\bar n,\bar \alpha}$ as an element in $C(\R)$. As the maps $(S_n)_n$ are Borel, it is  straightforward to check that,  for each $\bar n\in \N^{<\infty}$, each $\bar \alpha\in \Q^{<\infty}$, and each   $t\in \R$, the assignment 
    \[F\in \cF(\R)\mapsto g_{F,\bar n, \bar \alpha}(t)\in \R\]
    is Borel.
    Then, since 
    \begin{align*}
        F\in \varphi^{-1}(\{ X\in \SF \mid  X\cap U(f,n,\eps) & \neq \emptyset\})\\
        \Leftrightarrow\ 
        &\exists \delta\in\Q\cap (0,\eps),\ \exists \bar n\in \N^{<\infty},\  \exists \bar\alpha\in \Q^{<\infty}\\
        &\|f-g_{F,\bar n,\bar \alpha}\|_n\leq\delta,
    \end{align*}
    it follows that $ \varphi^{-1}(\{X\in \SF \mid X\cap U(f,n,\eps)\neq \emptyset\})$ is Borel. As the collection of the sets of the form $U(f,n,\eps)$ form a basis for the topology of $C(\R)$, it follows that $\varphi$ is Borel.

    By Propositions \ref{PropCFnotIsoCR} and \ref{PropCFIsomCR}  we have that \[\varphi(F)\cong C(\R) \ \Leftrightarrow\ F \in  \left\{E\in \cF(\R)\mid \forall n\in\N,\ E\setminus (-n,n)\text{ is uncountable}\right\}.\] So, $\varphi$ is a Borel reduction of $\{E\in \cF(\R)\mid \forall n\in\N,\ E\setminus (-n,n)\text{ is uncountable}\}$ to $\langle C(\R)\rangle$. As $\langle X\rangle$ is analytic for all $X\in \SF$ (Lemma \ref{iso}), Proposition \ref{PropFLargeSimga11Complete} implies that  $\langle C(\R)\rangle $ is $\Sigma^1_1$-complete.
\end{proof}
     
\subsection{Schwartz and Nuclear spaces}\label{someclasses}
The class of Fréchet spaces, being more general than Banach spaces, allow us to consider properties of topological vector spaces which are usually only present in trivial cases for Banach spaces, i.e., when the Banach spaces have finite dimension. In this subsection and in the following one, we compute the complexity of three of the most iconic of such Fréchet properties.   In this subsection specifically, we show that the coding for the class of Schwartz and nuclear spaces are both Borel in $\SF$ (see Theorems \ref{schwartz} and \ref{nuclear}).

Let $X$ be a Fréchet space. A set $K\subseteq X$ is called \emph{totally bounded with respect to a neighborhood $U$ of zero} if for all $\eps>0$ there exists $x_1,...,x_n\in X$ such that $K\subseteq\bigcup_{i=1}^n(x_i+\eps U)$. A Fréchet space $X$ is called a \emph{Schwartz space} if for all neighborhoods $U$ of zero, there exists a neighborhood $V$ of zero which is totally bounded with respect to $U$ (for more on Schwartz spaces see \cite[Chapter 6]{RolewiczSecondEdition1984}).

\begin{thm}\label{schwartz}
Let $\mathrm{Sch}=\{X\in\SF \mid  X\text{ is  Schwartz}\}$. Then $\mathrm{Sch}$ is Borel.
\end{thm}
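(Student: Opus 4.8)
The plan is to translate the topological definition of a Schwartz space into a statement quantifying only over the standard pseudonorms $(\|\cdot\|_n)_n$ on $C(\R)$ and over the Borel selectors $(S_j)_j$ of Lemma \ref{LemmaKuratowski-Ryll-Nardzewski}, and then to read off Borelness from the fact that all resulting atomic predicates are Borel and all quantifiers are countable.

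First I would simplify the defining condition. Since $\{B_{n,r}(X)\}_{n,r}$ is a basis of neighborhoods of zero in $X$, it suffices in the definition to let $U$ and $V$ range over such basic sets; moreover, by homogeneity of the pseudonorms, total boundedness of $V=B_{m,s}(X)$ with respect to $U=B_{n,r}(X)$ for every $\eps$ is equivalent to total boundedness of $B_{m,1}(X)$ with respect to $\|\cdot\|_n$ (rescaling absorbs $r,s$ into $\eps$). Using that $(\|\cdot\|_n)_n$ is increasing, I may additionally require $m\ge n$. Hence $X$ is Schwartz iff
\[X\in\mathrm{Sch}\ \Longleftrightarrow\ \forall n\in\N\ \exists m\in\N\ \forall \eps\in\Q^+\ \big(B_{m,1}(X)\text{ admits a finite }\eps\text{-net for }\|\cdot\|_n\big).\]

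The heart of the proof, and its main obstacle, is to remove the two uncountable quantifiers hidden in ``admits a finite $\eps$-net'': the choice of the (finitely many) centers, and the universal quantifier over the points of $B_{m,1}(X)$. For this I would use that $\{S_j(X)\}_j$ is dense in $X$, together with the monotonicity $\|\cdot\|_n\le\|\cdot\|_m$. Centers of an $\eps$-net may always be taken inside the symmetric set being covered (at the cost of replacing $\eps$ by $2\eps$) and then perturbed into the dense sequence; dually, it suffices to $\eps$-cover the dense points $S_j(X)$ lying in the ball, since any $f$ with $\|f\|_m<1$ is approximated in $\|\cdot\|_m$ --- hence in $\|\cdot\|_n$ --- by such points, while staying inside the open ball. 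Carrying out this fudging with rational radii and keeping track of the open boundary $\{\|\cdot\|_m<1\}$, I would prove the equivalence
\[X\in\mathrm{Sch}\ \Longleftrightarrow\ \forall n\ \exists m\ \forall \eps\in\Q^+\ \exists k\in\N\ \exists (i_1,\dots,i_k)\in\N^k\ \forall j\in\N\ \Big(\|S_j(X)\|_m<1\ \Rightarrow\ \exists l\le k,\ \|S_j(X)-S_{i_l}(X)\|_n<\eps\Big).\]

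Finally I would conclude Borelness. Each selector $S_j\colon\SF\to C(\R)$ is Borel and, for fixed $n$, the maps $(f,g)\mapsto\|f-g\|_n$ and $f\mapsto\|f\|_n$ are continuous on $C(\R)$; hence for all $i,j,m,n$ and all $\eps\in\Q^+$ the sets $\{X\mid \|S_j(X)\|_m<1\}$ and $\{X\mid \|S_j(X)-S_{i}(X)\|_n<\eps\}$ are Borel in $\SF$. Since every quantifier in the displayed formula ranges over a countable set ($\N$, $\Q^+$, finite tuples in $\bigcup_k\N^k$), the right-hand side is obtained from these Borel sets by countable unions, countable intersections, and complements, and is therefore Borel. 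Thus $\mathrm{Sch}$ is Borel. The only delicate point is the density and perturbation argument of the previous paragraph; everything else is bookkeeping.
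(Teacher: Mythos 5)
Your proof is correct, and its core is the same as the paper's: both reduce the Schwartz property to a countable quantification over pseudonorm balls with rational parameters, and both use the Kuratowski--Ryll-Nardzewski selectors $(S_j)_j$ of Lemma \ref{LemmaKuratowski-Ryll-Nardzewski} to replace the uncountable choice of net centers by indices into the dense sequence. Where you genuinely differ is in how the remaining uncountable quantifier (``every point of the ball is covered'') is made Borel. The paper keeps the covering as a set inclusion
\[\overline{B}_{m,p}(X)\subseteq\bigcup_{i=1}^{\ell}\bigl(S_{n_i}(X)+\eps\,\overline{B}_{n,q}(X)\bigr),\]
and obtains Borelness of this atomic predicate from Lemma \ref{bolas} (Borelness of $X\mapsto\overline{B}_{n,r}(X)$) together with Borelness of the inclusion relation on $\cF(C(\R))^2$; it also retains the rational radii $q,p$ rather than rescaling them away. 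You instead discretize the covered points as well, demanding only that the dense points $S_j(X)$ lying in the open unit ball be $\eps$-covered, so that every atomic predicate has the form $\|S_j(X)\|_m<1$ or $\|S_j(X)-S_i(X)\|_n<\eps$ and is Borel simply because the selectors are Borel and the pseudonorms are continuous --- no Effros--Borel hyperspace machinery is needed. The price is the two-sided approximation argument you sketch (moving net centers into the symmetric set and then into the dense sequence; approximating an arbitrary point of the open ball in $\|\cdot\|_m$, hence in $\|\cdot\|_n$ using $m\ge n$, by a dense point that stays inside the open ball), and your sketch of it is sound: the ball is open in $X$, and density in the Fr\'echet topology gives density with respect to each pseudonorm. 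In short, the reduction strategy is the same, but your finish is more elementary and self-contained, while the paper's is shorter because it outsources the final step to Lemma \ref{bolas} and the Borelness of containment of closed sets.
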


\begin{proof} First notice that a Fréchet space is a Schwartz space if and only if for all $n\in\N$, all $q\in\R^+$, there exists  $m\in\N$, and $p\in\R^+$, such that $\overline{B}_{m,p}$ is totally bounded with respect to $\overline{B}_{n,q}$. 

Denote by $(S_n)_n$ the sequence of Borel selectors $S_n\colon \SF\to C(\R)$ given by Lemma \ref{LemmaKuratowski-Ryll-Nardzewski}. Precisely, $(S_n)_n$ consists of the restrictions of the Borel selectors $S_n\colon \cF(C(\R))\to C(\R)$ given by Lemma \ref{LemmaKuratowski-Ryll-Nardzewski} to $\SF$.
 Then, given two neighborhoods $V$ and $U$ of zero, if for all $\eps>0$, there exists $n_1,...,n_\ell\in\N$, such that 
\[V\subseteq \bigcup_{i=1}^\ell\big( S_{n_i}(X)+\eps U\big),\]
 then $V$ is totally bounded with respect to $U$.

On the other hand, if for all $n\in\N$, and all $q\in\R^+$, there exists $m\in\N$, and $p\in\R^+$, such that $\overline{B}_{m,p}$ is totally bounded with respect to $\overline{B}_{n,q}$, then, for all $n\in\N$, and all $q\in\R^+$, there exists $m\in\N$, and $p\in\R^+$, such that, for all $\eps>0$, there exists $n_1,...,n_\ell\in\N$, such that 
\[\overline{B}_{m,p}\subseteq \cup_{i=1}^\ell\big( S_{n_i}(X)+\eps \overline{B}_{n,q}\big).\]
This implies that
\begin{align*}
X\in \mathrm{Sch}\Leftrightarrow &\forall q\in\Q^+,\forall n\in\N, \exists p\in\Q^+,\exists m\in\N, \forall \eps\in\Q^+, \exists n_1,...,n_\ell\in\N\\
&\overline{B}_{m,p}(X)\subseteq\bigcup_{i=1}^\ell\big(S_{n_i}(X)+\eps \overline{B}_{n,q}(X)\big).
\end{align*}
 As all the conditions above are Borel, we are done.
\end{proof}

A Fréchet space $X$ is called \emph{nuclear} if all bounded operators from $X$ to any Banach space are \emph{nuclear}. As we will only need a characterization of nuclear spaces that does not depend on the concept of a nuclear operator,  we chose not to define  nuclear operators in these notes. The interested reader can find a detailed exposition of the theory of nuclear spaces  in \cite{RolewiczSecondEdition1984}, Chapters $7$ and $8$. The following theorem gives us the characterization of nuclear spaces that we will need.

Say $(x_j)_j$ is a sequence in a Fréchet space $(X,\|\cdot\|_n)$. Then $(x_j)_j$ is called \emph{absolutely convergent} if, for all $n\in\N$, the series $\sum_j \|x_j\|_n$ converges. The reader can find the following theorem in \cite[Theorem 7.3.2]{RolewiczSecondEdition1984}.

\begin{thm}[Grothendieck] 
Let $X$ be a Fréchet space. Then $X$ is nuclear if and only if every unconditionally convergent series is absolutely convergent.\label{gro}
\end{thm}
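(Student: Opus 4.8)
The plan is to pass through the local Banach spaces attached to the seminorms and to reduce both implications to the theory of absolutely summing operators, the quantitative engine being the Dvoretzky--Rogers theorem. Fix an increasing sequence $(\|\cdot\|_n)_n$ of seminorms generating the topology of $X$. For each $n$ let $X_n=X/\ker\|\cdot\|_n$ with the induced norm, let $\tilde X_n$ be its completion, and for $m\geq n$ let $\iota_{n,m}\colon \tilde X_m\to \tilde X_n$ be the canonical linking map (well defined since $\|\cdot\|_n\leq \|\cdot\|_m$). The two facts I would take as the backbone are: (i) the Grothendieck--Pietsch criterion, that $X$ is nuclear if and only if for every $n$ there is $m\geq n$ such that $\iota_{n,m}$ is absolutely summing; and (ii) the defining property of an absolutely summing operator $T\colon E\to F$, namely that $T$ carries every weakly summable sequence (one with finite weak-$\ell_1$ norm $w_1((z_j))=\sup_{\|\xi\|\leq 1}\sum_j|\langle \xi,z_j\rangle|$) to an absolutely summable one, with $\sum_j\|Tz_j\|\leq \pi_1(T)\, w_1((z_j))$.

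For the forward implication I would argue as follows. Suppose $X$ is nuclear and $\sum_j x_j$ is unconditionally convergent; fix $n$ and choose $m\geq n$ with $\iota_{n,m}$ absolutely summing. Unconditional convergence makes the set of partial sums $\{\sum_{j\in F}x_j : F\subseteq\N \text{ finite}\}$ bounded, so $M_m:=\sup_F\|\sum_{j\in F}x_j\|_m<\infty$; testing against a norm-one functional $\xi$ on $\tilde X_m$ and splitting the finite index set according to the sign of $\langle \xi,x_j\rangle$ shows $\sum_j|\langle \xi,[x_j]_m\rangle|\leq 2M_m$, i.e. the images $([x_j]_m)_j$ have finite weak-$\ell_1$ norm in $\tilde X_m$. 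Fact (ii) then gives $\sum_j\|x_j\|_n=\sum_j\|\iota_{n,m}[x_j]_m\|_n<\infty$. As $n$ was arbitrary, $\sum_j x_j$ is absolutely convergent.

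For the converse I would prove the contrapositive: if $X$ is not nuclear I would manufacture an unconditionally convergent series that fails to be absolutely convergent. By (i) there is $n_0$ such that $\pi_1(\iota_{n_0,m})=\infty$ for every $m\geq n_0$, so at each level $m$ one finds finite families in $X$ that are flat in the weak-$\ell_1$ sense of $\tilde X_m$ yet carry large $\|\cdot\|_{n_0}$-mass. The quantitative Dvoretzky--Rogers theorem is what turns this into a workable construction: it lets one choose such families with $\ell_2$-controlled coefficients, so that weights $(t_k)$ with $\sum_k t_k^2<\infty$ but $\sum_k t_k=\infty$ produce a series that is unconditionally convergent in the seminorm where it was built while $\sum_j\|x_j\|_{n_0}=\infty$. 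Assembling these blocks over increasing $m$ with rapidly decaying weights and suitable choices of representatives would then yield the desired series.

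The main obstacle is exactly this last assembly in the converse. A block designed to defeat summability of $\iota_{n_0,m}$ is only controlled in $\tilde X_m$, i.e. in the seminorm $\|\cdot\|_m$ and below; its vectors may be arbitrarily large in the higher seminorms $\|\cdot\|_p$ with $p>m$. To obtain a single series that is unconditionally convergent in the Fréchet topology --- hence in every seminorm simultaneously --- one must diagonalize across the whole tower $(\|\cdot\|_p)_p$, balancing the Dvoretzky--Rogers weights against the uncontrolled growth in higher seminorms while preserving the divergence of the $\|\cdot\|_{n_0}$-sum. Making this bookkeeping close is the delicate heart of the argument; everything else reduces to the two standard facts (i) and (ii) together with the elementary characterization of unconditional convergence via boundedness of the set of partial sums.
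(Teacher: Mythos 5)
First, a point of comparison: the paper does not prove this theorem at all --- it is quoted as a classical result with a reference to \cite[Theorem 7.3.2]{RolewiczSecondEdition1984} --- so there is no in-paper argument to match yours against; your proposal has to stand on its own. Your forward direction does: bounded finite partial sums, the sign-splitting bound $\sum_j|\langle\xi,[x_j]_m\rangle|\le 2M_m$, and the absolutely summing linking map together give $\sum_j\|x_j\|_n<\infty$ for each $n$, and this is exactly the standard argument once the Grothendieck--Pietsch criterion (your fact (i)) is granted.

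The converse, however, is left genuinely unfinished --- you say so yourself (``making this bookkeeping close is the delicate heart'') --- and the irony is that the obstacle you flag is not actually there, so the gap closes routinely. Since the seminorms increase, the $\|\cdot\|_p$-dual unit ball is contained in the $\|\cdot\|_m$-dual unit ball whenever $p\le m$; hence a finite block manufactured at level $m$ with weak-$\ell_1$ norm $w_1^{(m)}\le 2^{-m}$ automatically satisfies $w_1^{(p)}\le 2^{-m}$ for \emph{every} $p\le m$, and for a fixed $p$ the only blocks uncontrolled in $\|\cdot\|_p$ are the finitely many with $m<p$, which are harmless because unconditional Cauchyness is a tail property and $\sup_{F,\,\varepsilon_j=\pm1}\bigl\|\sum_{j\in F}\varepsilon_j z_j\bigr\|_p\le w_1^{(p)}((z_j))$ by duality. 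So concatenating, for $m=1,2,\dots$, blocks with $w_1^{(m)}\le 2^{-m}$ and $\sum_j\|z_j\|_{n_0}\ge 1$ immediately yields a series unconditionally convergent in every seminorm whose $\|\cdot\|_{n_0}$-mass diverges; the uncontrolled growth of representatives in higher seminorms, which you worried about, never enters. Two further corrections to the plan: such blocks come directly from the \emph{negation} of $\pi_1(\iota_{n_0,m})<\infty$ (choose finite families in the dense image of $X$ in $\tilde X_m$, scale, and lift representatives to $X$, noting $\|\iota_{n_0,m}[z]_m\|_{n_0}=\|z\|_{n_0}$) --- the Dvoretzky--Rogers theorem is not needed at this point at all, and invoking it with $\ell_2$-weights adds machinery without content; it is relevant only if one wants to prove criterion (i) itself or the Banach-space special case. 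Finally, be aware that the real weight of the theorem is hidden in (i) (Pietsch domination plus the fact that compositions of absolutely summing maps are nuclear), so a referee would want that either proved or precisely cited rather than ``taken as backbone.''
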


\begin{thm}\label{nuclear}
Let $\mathrm{Nuc}=\{X\in \SF\mid X\text{ is nuclear}\}$. Then $\mathrm{Nuc}$ is Borel.
\end{thm}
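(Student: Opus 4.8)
The plan is to combine Grothendieck's characterization (Theorem \ref{gro}) with a finitary, quantitative reformulation of nuclearity, and then to render that reformulation Borel using the Borel selectors exactly as in the proof of Theorem \ref{schwartz}. The naive translation of Theorem \ref{gro}, namely ``for every $(x_j)_j\in X^\N$, if $\sum_j x_j$ is unconditionally convergent then it is absolutely convergent'', carries a universal quantifier over the Polish space $X^\N$ and is only manifestly $\Pi^1_1$; the point is to eliminate this quantifier in favour of countably many conditions on finite tuples.

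Concretely, the key step I would isolate is the following equivalence, for a Fréchet space $X$ with increasing generating seminorms $(\|\cdot\|_n)_n$: $X$ is nuclear if and only if
\[
\forall n\in\N\ \exists m\geq n\ \exists C\in\Q^+\ \forall k\in\N\ \forall x_1,\dots,x_k\in X:\quad \sum_{i=1}^k\|x_i\|_n \leq C\max_{\eps_i\in\{-1,1\}}\Big\|\sum_{i=1}^k \eps_i x_i\Big\|_m .
\]
This is the statement that each canonical map between the local Banach spaces is, uniformly, absolutely summing; the quantity $\max_{\eps}\|\sum_i\eps_ix_i\|_m$ is precisely the weak-$\ell_1$ seminorm $\sup\{\sum_i|\xi(x_i)| : \xi\text{ continuous},\ \|\xi\|_m^*\le1\}$. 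The easy direction is that the displayed inequality forces unconditional convergence to imply absolute convergence: if $\sum_j x_j$ converges unconditionally, then $M_m:=\sup_{\eps,F}\|\sum_{j\in F}\eps_j x_j\|_m<\infty$ for each $m$, and applying the inequality to the initial tuples $(x_1,\dots,x_k)$ and letting $k\to\infty$ gives $\sum_j\|x_j\|_n\le CM_m<\infty$ for every $n$. The converse is a gliding-hump argument: if the inequality fails for some $n$, one extracts finite tuples that are small in $\|\cdot\|_m$ for growing $m$ yet large in $\|\cdot\|_n$, normalizes the $\ell$-th block to have all signed sums bounded by $2^{-\ell}$ in $\|\cdot\|_\ell$ while keeping $\sum_i\|x_i\|_n\ge 1$, and concatenates; monotonicity of the seminorms makes the concatenation unconditionally convergent but not absolutely convergent.

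Granting this equivalence, Borelness follows as in Theorem \ref{schwartz}. Let $(S_j\colon\SF\to C(\R))_j$ be the Borel selectors of Lemma \ref{LemmaKuratowski-Ryll-Nardzewski}, so that $\{S_j(X)\}_j$ is dense in $X$; then the rational span $D(X)=\mathrm{span}_\Q\{S_j(X):j\in\N\}$ is a countable dense subset of $X$. Since both sides of the displayed inequality are continuous functions of the tuple $(x_1,\dots,x_k)$ and the inequality is non-strict, it holds for all tuples in $X^k$ if and only if it holds for all tuples in $D(X)^k$. Thus the inner universal quantifier ``$\forall x_1,\dots,x_k\in X$'' may be replaced by a countable quantifier over finite tuples of rational combinations of the $S_j(X)$, indexed by finitely much combinatorial data. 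For each fixed such choice the map $X\mapsto\big(\sum_i\|x_i\|_n,\ \max_{\eps}\|\sum_i\eps_ix_i\|_m\big)$ is Borel, because each $S_j$ is Borel and each seminorm $\|\cdot\|_n$ is continuous on $C(\R)$; hence the inner inequality defines a Borel subset of $\SF$. As all remaining quantifiers ($\forall n$, $\exists m$, $\exists C\in\Q^+$, $\forall k$, and the finite tuples) are countable, the whole condition is a countable Boolean combination of Borel sets, so $\mathrm{Nuc}$ is Borel.

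The main obstacle is the converse half of the finitary equivalence: passing from Grothendieck's qualitative ``unconditional $\Rightarrow$ absolute'' to the uniform quantitative inequality. This is where the analytic content lies, via the gliding-hump/diagonalization sketched above (alternatively it may be quoted from the Grothendieck--Pietsch theory of nuclear Fréchet spaces). Everything after that equivalence is a routine transcription into the Effros Borel structure using the selectors, parallel to Theorem \ref{schwartz}.
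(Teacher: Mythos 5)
Your proof is correct, but it takes a genuinely different route from the paper's. The paper never writes down an explicit Borel definition of $\mathrm{Nuc}$: it keeps the naive $\Pi^1_1$ transcription of Theorem \ref{gro} (the quantifier over $X^\N$ that you work to eliminate) to conclude that $\mathrm{Nuc}$ is coanalytic, then separately shows $\mathrm{Nuc}$ is \emph{analytic} by invoking the Komura--Komura theorem that there is a universal nuclear space $X_0$, so that $\mathrm{Nuc}=\{Y\in\SF\mid Y\hookrightarrow X_0\}$, which is analytic by Lemma \ref{iso}; Borelness then follows from Lusin's separation theorem. Your argument instead replaces the qualitative criterion by the uniform inequality $\sum_{i}\|x_i\|_n\leq C\max_{\varepsilon}\bigl\|\sum_i\varepsilon_ix_i\bigr\|_m$ (the canonical links between the local Banach spaces are absolutely summing), whose equivalence with nuclearity you establish from Theorem \ref{gro} by a gliding hump, and which transcribes into the Effros Borel structure with selectors exactly as in Theorem \ref{schwartz}. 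One point you should make explicit in the gliding hump: the failure of the inequality only gives smallness of the \emph{full} signed sums $\max_\varepsilon\|\sum_{i=1}^k\varepsilon_ix_i\|_m$, while unconditional convergence of the concatenated series requires control of all \emph{partial} signed sums; this is repaired by the standard identity $\sum_{i\in F}\varepsilon_ix_i=\frac{1}{2}\bigl(\sum_{i=1}^k\varepsilon_i'x_i+\sum_{i=1}^k\varepsilon_i''x_i\bigr)$, which bounds every partial signed sum by the maximum over full sign choices; similarly, the fact that unconditional convergence forces $\sup_{\varepsilon,F}\|\sum_{j\in F}\varepsilon_jx_j\|_m<\infty$ deserves a line. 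As for what each approach buys: the paper's proof is shorter but rests on two black boxes (Komura--Komura universality and Lusin separation) and is non-effective, yielding no explicit Borel description; yours is self-contained modulo Theorem \ref{gro}, runs parallel to the Schwartz case, produces a concrete countable Boolean combination of Borel conditions witnessing Borelness, and in passing reproves the absolutely-summing form of the Grothendieck--Pietsch characterization of nuclearity for Fréchet spaces.
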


\begin{lemma}
The set \[\mathrm{UC}=\{(x_n)\in C(\R)^\N\mid (x_n)\text{ is unconditionally convergent}\}\] is coanalytic. 
\end{lemma}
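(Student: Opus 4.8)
The plan is to write out explicitly what it means for a sequence $(x_n)_n$ in $C(\R)$ to be unconditionally convergent and to observe that, after passing to a suitable countable quantification, this becomes a coanalytic condition. Recall that a series $\sum_n x_n$ in a Fréchet space is \emph{unconditionally convergent} if $\sum_n x_{\sigma(n)}$ converges for every permutation $\sigma$ of $\N$. The naive reformulation therefore begins with a universal quantifier over the uncountable Polish space of all permutations of $\N$, which is exactly what produces coanalyticity: a $\forall$ over a Polish space applied to a Borel matrix yields a $\Pi^1_1$ set.

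First I would make the matrix inside that quantifier Borel. Fixing the standard sequence of pseudonorms $(\|\cdot\|_k)_k$ on $C(\R)$ from \eqref{Eq.Standard.PseudoNormCR}, the partial sums $S_N^\sigma=\sum_{n=1}^N x_{\sigma(n)}$ form a Cauchy sequence (equivalently a convergent sequence, since $C(\R)$ is complete) with respect to each $\|\cdot\|_k$ precisely when $(x_n)_n$ is unconditionally convergent along $\sigma$. Concretely I would write
\[
(x_n)_n\in \mathrm{UC}\ \Leftrightarrow\ \forall \sigma\in S_\infty\ \forall k\in\N\ \forall \eps\in\Q^+\ \exists N\in\N\ \forall M\ge N\ \Big\|\sum_{n=N+1}^{M}x_{\sigma(n)}\Big\|_k\le \eps,
\]
where $S_\infty$ denotes the Polish group of permutations of $\N$. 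The point is that for fixed $\sigma,k,\eps$ the inner part is an arithmetical (hence Borel) condition in the pair $((x_n)_n,\sigma)$, because the map $((x_n)_n,\sigma)\mapsto \|\sum_{n=N+1}^M x_{\sigma(n)}\|_k$ is continuous in the product topology; so the whole expression after $\forall\sigma$ defines a Borel subset of $C(\R)^\N\times S_\infty$, and projecting out the universal quantifier over the Polish space $S_\infty$ leaves a $\Pi^1_1$ set.

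The one subtlety I would address carefully is replacing the genuine permutation quantifier $\forall\sigma\in S_\infty$ by something that is manifestly legitimate. Two routes are available. One can either keep $S_\infty$ as the auxiliary Polish space and simply invoke that a set of the form $\{x\mid \forall y\in Y,\ B(x,y)\}$ with $Y$ Polish and $B$ Borel is coanalytic. Alternatively, using the classical fact that unconditional convergence is equivalent to convergence over the net of finite partial sums, one can rewrite the condition using only quantifiers over finite subsets of $\N$, i.e.
\[
\forall k\in\N\ \forall\eps\in\Q^+\ \exists F_0\subseteq\N\text{ finite}\ \forall F\subseteq\N\text{ finite with }F\cap F_0=\emptyset\ \ \Big\|\sum_{n\in F}x_n\Big\|_k\le\eps,
\]
which has the logical shape $\forall\exists\forall$ over countable sets and is therefore Borel, in fact $\Pi^0_3$. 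I expect the main (and only real) obstacle is choosing which of these formulations to commit to: the finite-subset version is cleaner and actually gives a Borel — not merely coanalytic — bound, so I would remark that the lemma as stated is safe with plenty of room to spare, and I would make sure the equivalence between the permutation definition and the finite-subset (Cauchy-net) criterion is cited or justified, since that equivalence is exactly the standard characterization of unconditional convergence in complete metrizable groups.
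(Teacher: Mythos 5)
Your proposal is correct, and your first route is essentially the paper's own proof: the paper likewise reduces the lemma to a single universal quantifier over a Polish space followed by a Borel matrix, except that instead of permutations $\sigma\in S_\infty$ it quantifies over sign/subseries choices $(\eps_n)\in\Delta$ and demands that the partial sums $\bigl(\sum_{j=1}^n\eps_jx_j\bigr)_n$ be Cauchy --- so it, too, invokes a classical equivalence (the subseries criterion) rather than the raw definition, but the descriptive-set-theoretic content (co-projection of a Borel set along a Polish space is $\Pi^1_1$) is identical. Your second route, via the finite-subset Cauchy-net criterion, is genuinely different and buys strictly more: it eliminates the uncountable quantifier altogether and shows that $\mathrm{UC}$ is Borel (indeed $\Pi^0_3$), not merely coanalytic; the only extra input is the standard equivalence, valid in any complete metrizable topological vector space, between unconditional convergence and convergence of the net of finite partial sums, which, as you note, should be cited. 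This sharper bound is not a luxury in context: in the paper's next lemma ($\mathrm{Nuc}$ is coanalytic), $\mathrm{UC}$ appears in the antecedent of an implication under a universal quantifier over $C(\R)^\N$, and if $\mathrm{UC}$ were only known to be $\Pi^1_1$, that computation would a priori give $\Pi^1_2$, since the negated antecedent is then $\Sigma^1_1$; one needs $\mathrm{UC}$ to be analytic or Borel for the displayed formula there to define a $\Pi^1_1$ set. So your finite-subset formulation is precisely the version that makes the downstream argument go through cleanly, and it would be worth recording it as the proof of the lemma.
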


\begin{proof}
This is simply ``counting quantifiers''. Indeed,
\[(x_n)\in \mathrm{UC}\Leftrightarrow \forall (\eps_n)\in \Delta,  \ \Big(\sum_{j=1}^n\eps_j x_j\Big)_n \text{ is Cauchy}.\]
\end{proof}

\begin{lemma}
$\mathrm{Nuc}$ is coanalytic.
\end{lemma}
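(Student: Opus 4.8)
The plan is to show that $\mathrm{Nuc}$ is coanalytic by writing $X \in \mathrm{Nuc}$ as a statement involving only countable quantifiers over Borel conditions and one appearance of the coanalytic set $\mathrm{UC}$ in a ``good'' position. By Grothendieck's characterization (Theorem \ref{gro}), a Fréchet space $X$ is nuclear if and only if every unconditionally convergent series in $X$ is absolutely convergent. The natural approach is to quantify over sequences drawn from $X$ via the Borel selectors, and to express both ``unconditionally convergent'' and ``absolutely convergent'' as Borel-in-the-parameters conditions, so that the negation (the witness to non-nuclearity) becomes analytic.

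First I would set up the selectors: let $(S_n \colon \SF \to C(\R))_n$ be the Borel selectors of Lemma \ref{LemmaKuratowski-Ryll-Nardzewski}, so that $\{S_n(X)\}_n$ is dense in $X$ for each $X \in \SF$. The key observation is that to test nuclearity it suffices to consider sequences built from the selectors, since density lets us approximate any unconditionally convergent series. I would then write, schematically,
\[
X \in \mathrm{Nuc} \ \Leftrightarrow\ \forall (n_j)_j \in \N^\N,\ \Big[(S_{n_j}(X))_j \in \mathrm{UC}\ \Rightarrow\ (S_{n_j}(X))_j \text{ is absolutely convergent}\Big].
\]
The outer quantifier $\forall (n_j)_j \in \N^\N$ is a quantifier over a Polish space, which by itself would only give coanalytic once the matrix is Borel or coanalytic. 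The absolute convergence condition, $\forall n\ \sum_j \|S_{n_j}(X)\|_n < \infty$, is Borel in the pair $(X,(n_j)_j)$ since each $\|S_{n_j}(X)\|_n$ depends Borel-measurably on $X$. The subtle point is the membership $(S_{n_j}(X))_j \in \mathrm{UC}$: since $\mathrm{UC}$ is coanalytic and it appears on the left of an implication (i.e.\ negated, hence effectively analytic), the whole matrix ``$\mathrm{UC} \Rightarrow \text{absolutely convergent}$'' is of the form (analytic) $\vee$ (Borel), which is analytic; prefixing a universal real quantifier $\forall (n_j)_j$ then yields a coanalytic set.

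The main obstacle I expect is justifying that restricting to selector-indexed sequences loses no generality, i.e.\ that $X$ is nuclear if and only if \emph{every selector-built} unconditionally convergent series is absolutely convergent. The delicate direction is showing that if some genuine unconditionally convergent series $(x_j)_j$ in $X$ fails to be absolutely convergent, then one can find a selector-built series $(S_{n_j}(X))_j$ witnessing the same failure; this requires a density/perturbation argument showing that unconditional convergence and failure of absolute convergence are both stable under small enough approximations of the terms by selector values. Once this reduction is in place, the quantifier count above gives coanalyticity directly. I would also double-check the logical form carefully: the point is that $\mathrm{UC}$ must occur on the hypothesis side of the implication so that its complement (analytic) enters positively, keeping the overall complexity within $\Pi^1_1$ after the single universal real quantifier.
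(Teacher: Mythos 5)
Your route is essentially the paper's (Grothendieck's characterization, the set $\mathrm{UC}$, and a quantifier count), but there is a genuine gap at precisely the step you single out as the key point. You argue that since $\mathrm{UC}$ is coanalytic and appears on the hypothesis side, the matrix ``$(S_{n_j}(X))_j \in \mathrm{UC} \Rightarrow \text{absolutely convergent}$'' is (analytic) $\vee$ (Borel), hence analytic, and that prefixing $\forall (n_j)_j \in \N^\N$ then yields a coanalytic set. That is backwards: $\Pi^1_1$ is closed under universal quantification over a Polish space of a \emph{Borel} or \emph{coanalytic} matrix, whereas a universal real quantifier in front of a $\Sigma^1_1$ matrix gives only $\Pi^1_2$, and this bound is sharp. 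Indeed, every $\Pi^1_2$ set can be put in the form $\{x \mid \forall z\, (C(z) \Rightarrow B(x,z))\}$ with $C$ coanalytic and $B$ Borel: writing the complement as $\exists y\, P(x,y)$ with $P$ coanalytic, choose a Borel map $T$ into trees with $P(x,y) \Leftrightarrow T(x,y) \in \WF$, and note $\exists y \, P(x,y) \Leftrightarrow \exists (y,S)\, \bigl[ S \in \WF \wedge S = T(x,y)\bigr]$. So, knowing only that $\mathrm{UC}$ is coanalytic, your formula bounds $\mathrm{Nuc}$ by $\Pi^1_2$, not by $\Pi^1_1$; for the implication form to stay coanalytic one needs $\mathrm{UC}$ to be \emph{analytic} --- the coanalytic bound is the useless direction here.

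The repair is to upgrade the preceding lemma: $\mathrm{UC}$ is in fact Borel. Replace the sign-sequence definition (an uncountable quantifier over $\Delta$) by the Cauchy criterion for unconditional convergence, valid in any Fréchet space: $(x_j)_j \in \mathrm{UC}$ iff for all $m \in \N$ and all $\eps \in \Q^+$ there is $N \in \N$ such that $\bigl\|\sum_{j \in G} x_j\bigr\|_m < \eps$ for every finite $G \subseteq \N$ with $\min G > N$. All quantifiers here are countable and the inner condition is open in $C(\R)^\N$, so $\mathrm{UC}$ is Borel; with a Borel matrix the universal real quantifier does give $\Pi^1_1$. (The paper's own write-up also invokes only ``$\mathrm{UC}$ is coanalytic'' at this step, so it leans on the same invalid counting principle; the Borel, or at least analytic, upgrade of $\mathrm{UC}$ is what actually makes the argument run.) A secondary issue: your detour through the selectors is an unnecessary complication. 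The paper quantifies over all $(x_n) \in C(\R)^\N$ and inserts the Borel condition ``$\forall n,\ x_n \in X$'' into the hypothesis, which removes any need for the density/perturbation reduction you leave open. That reduction can be completed (approximate each $x_j$ by a selector value with errors absolutely summable in every pseudonorm, which perturbs neither unconditional convergence nor the failure of absolute convergence), but nothing is gained by it.
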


\begin{proof}
By Grothendieck's characterization of nuclear Fréchet spaces, we have
\begin{align*}
X\in \mathrm{Nuc}\ \Leftrightarrow \ &\forall (x_n)\in \mathrm{UC}\\
& (\forall n,\ x_n\in X)\rightarrow \left(\forall m\ \left(\sum_{j=1}^n\|x_j\|_m\right)_n\text{ is Cauchy}\right).
\end{align*}
 As $\rm UC$ is coanalytic, we have that $\mathrm{Nuc}$ is coanalytic.
\end{proof}

\begin{lemma}
$\mathrm{Nuc}$ is analytic.
\end{lemma}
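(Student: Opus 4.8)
The plan is to establish a quantitative reformulation of Grothendieck's criterion (Theorem \ref{gro}) in which every quantifier ranges over a countable set except for a single universal quantifier over finite tuples of elements of $X$; the latter is then discretized using the Borel selectors of Lemma \ref{LemmaKuratowski-Ryll-Nardzewski}. Concretely, writing $(\|\cdot\|_n)_n$ for the standard pseudonorms on $C(\R)$, I claim that for $X\in\SF$,
\[
X\in\mathrm{Nuc}\iff \forall n\in\N\ \exists m\geq n\ \exists C>0\ \forall\ell\in\N\ \forall(x_1,\dots,x_\ell)\in X^\ell:\ \sum_{i=1}^\ell\|x_i\|_n\leq C\max_{\epsilon\in\{-1,1\}^\ell}\Big\|\sum_{i=1}^\ell\epsilon_i x_i\Big\|_m.
\]

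For the forward implication I would introduce the space $E$ of all sequences $(x_i)_i\in X^\N$ for which $\sum_i\epsilon_i x_i$ converges for every choice of signs $(\epsilon_i)_i$; equipped with the pseudonorms $q_m((x_i)_i)=\sup_{\ell,\epsilon}\big\|\sum_{i\leq\ell}\epsilon_i x_i\big\|_m$, the space $E$ is a Fréchet space. Let $F$ be the Fréchet space of absolutely convergent sequences in $X$, with pseudonorms $p_n((x_i)_i)=\sum_i\|x_i\|_n$. By Theorem \ref{gro}, nuclearity of $X$ says exactly that the (coordinatewise) identity is a well-defined linear map $\iota\colon E\to F$. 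Since both $E$ and $F$ inject continuously into $X^\N$ (each $q_m$ and each $p_n$ dominates the corresponding coordinate pseudonorm up to a constant), the graph of $\iota$ is closed, so the closed graph theorem for Fréchet spaces gives that $\iota$ is continuous. By Lemma \ref{continuous}, for every $n$ there are $m$ and $C$ with $p_n\leq C q_m$ on $E$, and by monotonicity of the pseudonorms we may take $m\geq n$; restricting this inequality to finitely supported sequences (which all belong to $E$) yields the right-hand side of the displayed equivalence. The converse is immediate: given those inequalities and an unconditionally convergent $\sum_i x_i$, applying them to the partial tuples bounds $\sum_{i\leq\ell}\|x_i\|_n$ uniformly in $\ell$, so $\sum_i\|x_i\|_n<\infty$ for every $n$ and the series is absolutely convergent; hence $X$ is nuclear by Theorem \ref{gro}.

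It then remains to observe that the right-hand side defines an analytic (indeed Borel) subset of $\SF$. Let $(S_k\colon\SF\to C(\R))_k$ be Borel selectors as in Lemma \ref{LemmaKuratowski-Ryll-Nardzewski}, so that $\{S_k(X)\}_k$ is dense in $X$. For each fixed arity $\ell$, both sides of the inner inequality are continuous functions of $(x_1,\dots,x_\ell)\in X^\ell$ and $\leq$ is a closed condition, so the inequality holds for all tuples in $X^\ell$ if and only if it holds for all tuples $(S_{k_1}(X),\dots,S_{k_\ell}(X))$ with $(k_1,\dots,k_\ell)\in\N^{<\N}$. Therefore
\[
X\in\mathrm{Nuc}\iff\exists g\in(\N\times\Q^+)^\N\ \forall n\in\N\ \forall(k_1,\dots,k_\ell)\in\N^{<\N}:\ \Theta\big(X,n,g(n),(k_1,\dots,k_\ell)\big),
\]
where $\Theta(X,n,(m,C),\vec k)$ asserts $m\geq n$ together with $\sum_i\|S_{k_i}(X)\|_n\leq C\max_{\epsilon}\|\sum_i\epsilon_i S_{k_i}(X)\|_m$, a Borel condition in $X$ since each $S_{k_i}$ is Borel and $f\mapsto\|f\|_n$ is continuous. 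For each fixed $g$ the matrix is a countable intersection of Borel sets, so projecting out the witness $g\in(\N\times\Q^+)^\N$ exhibits $\mathrm{Nuc}$ as analytic. (Because the choices for distinct $n$ are independent, the witness quantifier can be pushed inside as $\exists(m,C)$, so in fact all quantifiers are countable and $\mathrm{Nuc}$ is Borel; combined with the preceding lemma this proves Theorem \ref{nuclear}.)

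The main obstacle is the equivalence itself, and within it the verification that $E$ is a genuine Fréchet space so that the closed graph theorem applies, as well as matching the paper's notion of unconditional convergence with the seminorms $q_m$. Once the quantitative inequality is in hand, the descriptive computation via Borel selectors is routine.
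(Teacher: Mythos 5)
Your proposal is correct in substance, but it takes a genuinely different route from the paper. The paper's proof is soft: it invokes the Komura--Komura theorem that there is a universal nuclear space (concretely $s^\N$), so that $\mathrm{Nuc}=\{Y\in\SF\mid Y\hookrightarrow s^\N\}$, and analyticity then falls out of Lemma \ref{iso} (analyticity of the isomorphism relation) together with the Borelness of $\{Y\in\SF\mid Y\subseteq X\}$; no quantitative analysis is needed, but the output is only ``analytic'', and Borelness of $\mathrm{Nuc}$ is then recovered in Theorem \ref{nuclear} by pairing this with the coanalyticity lemma and Lusin separation. You instead prove a quantitative reformulation of Grothendieck's criterion --- in effect the classical Grothendieck--Pietsch characterization, that for every $n$ some link map is absolutely summing, since $\max_{\epsilon}\|\sum_i\epsilon_i x_i\|_m$ is exactly $\sup\{\sum_i|\xi(x_i)|:\xi\in X',\ |\xi|\le\|\cdot\|_m\}$ --- by a closed graph argument between the sequence spaces $E$ and $F$, and then discretize the resulting $\forall n\,\exists(m,C)\,\forall\mathrm{tuple}$ statement with the Kuratowski--Ryll-Nardzewski selectors. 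What your route buys is strictly more: all quantifiers become countable, so $\mathrm{Nuc}$ is Borel outright, making the paper's coanalyticity lemma and the appeal to Lusin separation superfluous. The cost is the functional-analytic infrastructure you must supply yourself: that $q_m$ is finite on $E$ (i.e., that an unconditionally convergent series has bounded signed partial sums --- a Baire category argument on the Cantor group $\{-1,1\}^{\N}$, which, note, your converse direction also uses silently when it asserts the bound is ``uniform in $\ell$''), that $E$ and $F$ are complete, and the convexity trick showing that the sup over prefix partial sums of a finitely supported sequence equals $\max_{\epsilon}\|\sum_{i\le\ell}\epsilon_i x_i\|_m$. These are standard verifications and you flag most of them, so I regard the proof as sound; it trades the paper's black-box citation of Komura--Komura for a self-contained but longer computation, and as a bonus it parallels the structure of the paper's own Borelness proof for Schwartz spaces (Theorem \ref{schwartz}).
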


\begin{proof}
By a result of T. Komura and Y. Komura there exists a universal nuclear space, i.e., there exists $X\in \mathrm{Nuc}$ such that, for all $Y\in \mathrm{Nuc}$, there exists an embedding $Y\hookrightarrow X$ (see \cite{KomuraKomura1965MathAnn} for the original proof  or \cite[Corollary 29.9]{MeiseVogt1997Book}  for further details). Therefore, as a subspace of a nuclear space is nuclear, we have  $\mathrm{Nuc}=\{Y\in \SF\mid Y\hookrightarrow X\}$. As $\{(X,Y)\in \SF^2\mid X\cong Y\}$ is analytic, and as $\{Y\in \SF\mid Y\subseteq X\}$ is Borel, we conclude that $\mathrm{Nuc}$ is analytic.
\end{proof}

\begin{proof}[Proof of Theorem \ref{nuclear}]
As $\mathrm{Nuc}$ is both analytic and coanalytic, Lusin Separation Theorem (see \cite[Theorem 14.7]{KechrisBook1995}) gives us that  $\mathrm{Nuc}$ is Borel. 
\end{proof}

Let 
\[s=\left\{(x_i)_i\in \R^\N\mid \sum_{i\in \N}|x_i|^2i^{2k}<\infty\text{ for all }k\in\N\right\}\]
and endow $s$ with  the Fréchet topology given by the sequence $(\|\cdot\|_k)_k$ of pseudonorms defined by 
\[\|(x_i)_i\|_k=\left( \sum_{i\in \N}|x_i|^2i^{2k}\right)^{1/2}\]
for all $(x_i)_i\in s$ and all $k\in\N$. The Fréchet space $s$ is called the space of \emph{rapidly decreasing sequences}.
\begin{cor}\label{CorsNBorelClass}
    The set 
    \[\{X\in \SF\mid X\text{ embeds into }s^\N\}\]
    is Borel.
\end{cor}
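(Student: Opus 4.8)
The plan is to recognize that the set in question is nothing other than $\mathrm{Nuc}$, whose Borelness was already established in Theorem \ref{nuclear}. Concretely, I would prove the set equality
\[\{X\in \SF\mid X\text{ embeds into }s^\N\}=\mathrm{Nuc},\]
after which the corollary is immediate from Theorem \ref{nuclear}. Thus the whole argument reduces to two inclusions, each of which is a permanence or universality statement about nuclearity.

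For the inclusion $\subseteq$, I would first recall that $s$ is itself a nuclear Fréchet space. This can be verified directly from the Grothendieck--Pietsch criterion for Köthe sequence spaces: with the Köthe matrix $a_{k,i}=i^k$ associated to the pseudonorms defining $s$, one has, for every $k\in\N$, the estimate $\sum_i a_{k,i}/a_{k+2,i}=\sum_i i^{-2}<\infty$, so the criterion is met. I would then invoke the classical permanence properties that a countable product of nuclear spaces is nuclear (so $s^\N$ is nuclear) and that every closed subspace of a nuclear space is nuclear; both are standard and can be quoted from \cite{RolewiczSecondEdition1984} or \cite{MeiseVogt1997Book}. Combining these, any $X\in \SF$ which embeds into $s^\N$ is nuclear, i.e.\ $X\in \mathrm{Nuc}$.

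For the reverse inclusion $\supseteq$, I would invoke the Komura--Komura theorem in its sharp form, namely that $s^\N$ is \emph{universal} for the class of nuclear Fréchet spaces: every nuclear Fréchet space is isomorphic to a closed subspace of $s^\N$ (see \cite{KomuraKomura1965MathAnn} or \cite[Corollary 29.9]{MeiseVogt1997Book}). Hence every $X\in \mathrm{Nuc}$ embeds into $s^\N$, which completes the proof of the set equality. Note that the statement is genuinely about $s^\N$ and not about $s$ itself: a subspace of $s$ inherits the continuous norm $\|\cdot\|_0$, whereas nuclear spaces such as $\R^\N$ admit no continuous norm, so $s$ alone is not universal; the passage to the countable power is what produces a universal object.

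With the set equality in hand, the corollary follows at once: by Theorem \ref{nuclear} the set $\mathrm{Nuc}$ is Borel, hence so is $\{X\in \SF\mid X\text{ embeds into }s^\N\}$. The only point that requires care—and the natural main obstacle—is pinning down that the universal nuclear space furnished by Komura--Komura can be taken to be \emph{exactly} $s^\N$, so that the left-hand set is literally $\mathrm{Nuc}$ and not merely contained in it; once this precise form of the theorem is cited (it is the same space already used in the proof that $\mathrm{Nuc}$ is analytic), the remainder is just an assembly of the standard stability properties of nuclearity.
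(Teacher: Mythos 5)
Your proposal is correct and follows exactly the paper's argument: the paper's proof also identifies the set with $\mathrm{Nuc}$ via the Komura--Komura characterization (a Fréchet space is nuclear if and only if it embeds into $s^\N$) and then invokes Theorem \ref{nuclear}. The extra detail you supply on the two inclusions (nuclearity of $s$, permanence under products and subspaces, and the precise universal form of Komura--Komura) is accurate but is compressed in the paper into a single citation.
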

\begin{proof}
    This follows immediately from Theorem \ref{nuclear} since a Fréchet space is nuclear if and only it isomorphically embeds into $s^\N$ (see  \cite{KomuraKomura1965MathAnn} or \cite[Corollary 29.9]{MeiseVogt1997Book}).
\end{proof}

  \subsection{Montel spaces}
  \label{SubsectionMontel}
In this subsection, we continue our study of those Fréchet spaces which are not normable and look at Montel spaces. Recall that a Fréchet space $X$ is called a \emph{Montel space} if all closed bounded subsets of $X$ are compact, in other words, if it satisfies the Heine--Borel property. Banach Montel spaces are just the finite dimensional ones, and therefore the Montel property defines a trivially Borel class in $\SB$. However, and differently from Schwartz and nuclear spaces, we will show that the coding for  Montel spaces in $\SF$ is not Borel. In fact, we show in Theorem \ref{ThmMonCompleteCoanalytic} that it is complete coanalytic.

 We start by showing that Montel spaces are at most coanalytic by simply ``counting quantifiers'':

\begin{prop}\label{montel}
Let $\mathrm{Mon}=\{X\in \SF\mid X\text{ is Montel}\}$. Then $\mathrm{Mon}$ is coanalytic.
\end{prop}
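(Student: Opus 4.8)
The plan is to produce a coanalytic description of $\mathrm{Mon}$ by ``counting quantifiers'', the one danger being that the obvious sequential characterization (every bounded sequence has a convergent subsequence) hides an \emph{existential} quantifier over the subsequence and its limit, which would only yield a $\Pi^1_2$ description. To keep every quantifier over an uncountable space \emph{universal}, I would instead work with total boundedness and witness the finite covers by the countable family of Borel selectors $(S_n)_n$ of Lemma \ref{LemmaKuratowski-Ryll-Nardzewski}, exactly as in the proof of Theorem \ref{schwartz}.

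First I would record a purely functional-analytic reduction. For $(M_k)_k\in\N^\N$ and $X\in\SF$ set
\[A_{(M_k)}(X)=\{f\in X\mid \|f\|_k\le M_k\ \text{ for all }\ k\in\N\}.\]
Each such set is closed (an intersection of preimages of closed intervals under the continuous pseudonorms, intersected with the closed set $X$) and bounded, and every bounded subset of $X$ is contained in some $A_{(M_k)}(X)$ with integer bounds. Since $X$ is complete and a closed subset of a compact set is compact, it follows that $X$ is Montel if and only if every $A_{(M_k)}(X)$ is compact; and as these sets are closed and $X$ is complete, this happens if and only if each $A_{(M_k)}(X)$ is totally bounded.

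Next I would express total boundedness via the dense selectors: because $\{S_n(X)\}_n$ is dense in $X$, the set $A_{(M_k)}(X)$ is totally bounded if and only if for every $m\in\N$ and every $\eps\in\Q^+$ there are finitely many indices $n_1,\dots,n_\ell$ with $A_{(M_k)}(X)\subseteq\bigcup_{i=1}^\ell\big(S_{n_i}(X)+\overline{B}_{m,\eps}\big)$ (the centres of any finite cover may be moved into the dense set at the cost of halving the radius). Assembling the two steps gives the target equivalence
\[X\in\mathrm{Mon}\ \Longleftrightarrow\ \forall (M_k)_k\in\N^\N\ \forall m\in\N\ \forall\eps\in\Q^+\ \exists (n_1,\dots,n_\ell)\in\N^{<\N}\ \forall f\in C(\R)\ \big[\Phi\big],\]
where $\Phi$ denotes the implication
\[\Big(f\in X\ \wedge\ \forall k\in\N\ \|f\|_k\le M_k\Big)\ \Longrightarrow\ \bigvee_{i=1}^{\ell}\ \|f-S_{n_i}(X)\|_m\le\eps.\]

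Finally I would count quantifiers. The matrix $\Phi$ is Borel in all of its arguments: the relation $\{(X,f)\mid f\in X\}$ is Borel (indeed $f\in X$ iff $\forall m\in\N\,\forall\delta\in\Q^+\,\exists n\in\N\ \|f-S_n(X)\|_m<\delta$, since $\{S_n(X)\}_n$ is dense in the closed set $X$), the condition $\forall k\,\|f\|_k\le M_k$ is a countable conjunction of Borel conditions, and the finite disjunction is Borel because each $S_{n_i}$ is Borel and each $\|\cdot\|_m$ is continuous. In the displayed equivalence the only quantifiers over an uncountable Polish space are $\forall(M_k)_k$ and $\forall f$, both universal, while all remaining quantifiers range over countable index sets and do not raise the projective complexity. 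Since $\Pi^1_1$ is closed under countable unions and intersections and under universal real quantification, the right-hand side defines a coanalytic set, so $\mathrm{Mon}$ is coanalytic. The main obstacle, and essentially the only subtle point, is the choice made in the first two steps: total boundedness must be witnessed by the countable family $(S_n)_n$ so that the unavoidable ``there is a finite cover'' becomes a \emph{countable} existential sandwiched between two universal real quantifiers, rather than the existential real quantifier that the sequential definition of compactness would force.
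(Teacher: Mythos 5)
Your proof is correct, and the quantifier count at the end is sound: $\Pi^1_1$ is closed under countable unions and intersections and under universal quantification over Polish spaces, so your displayed formula, whose only uncountable quantifiers ($\forall (M_k)_k$ and $\forall f$) are universal, does define a coanalytic set. However, your route is genuinely different from the paper's. The paper quantifies universally over the hyperspace $\cF(C(\R))$: letting $B$ and $C$ denote the sets of bounded, respectively compact, closed subsets of $C(\R)$, it writes $X\in\mathrm{Mon}\Leftrightarrow\forall F\in B\;\big((F\subseteq X)\rightarrow F\in C\big)$ and cites the facts that $B$, $C$ (see \cite[Exercise 12.11]{KechrisBook1995}), and the inclusion relation are Borel in the Effros Borel structure; a universal quantifier over a standard Borel space applied to a Borel matrix gives $\Pi^1_1$ at once. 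You avoid the hyperspace entirely: you parametrize the bounded sets cofinally by the boxes $A_{(M_k)}(X)$, $(M_k)_k\in\N^\N$, trade compactness for total boundedness (legitimate, since these boxes are closed in the complete space $C(\R)$), and witness finite covers with the Kuratowski--Ryll-Nardzewski selectors---this is precisely the strategy of the paper's proof of Theorem \ref{schwartz} for Schwartz spaces, transplanted to $\mathrm{Mon}$. The paper's argument buys brevity, at the price of invoking Borel-ness of compactness and boundedness in $\cF(C(\R))$; yours buys self-containedness and an explicit $\Pi^1_1$ normal form, and your opening observation---that the sequential Heine--Borel formulation hides an existential quantifier over subsequences and limits and would only yield a $\Pi^1_2$ description---identifies exactly the trap that both arguments are designed to avoid.
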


\begin{proof}
Let $B=\{F\in \cF(C(\R))\mid F\text{ is bounded}\}$ and $C=\{F\in \cF(C(\R))\mid F\text{ is compact}\}$. As 
\[F\in B \Leftrightarrow \forall n\in\N,\ \forall p\in\Q^+,\ \exists a\in \Q^+\ \  F\subseteq a \overline{B}_{n,p},\]
 $B$ is Borel. Also, the set of compact subsets of any Polish space is well known to be Borel (see \cite[Exercise 12.11]{KechrisBook1995}). Therefore, we have
\[X\in \mathrm{Mon}\Leftrightarrow \forall F\in B \   (F\subseteq X)\to F\in C,\]
 and $\mathrm{Mon}$ is coanalytic.  
\end{proof}

 In order to show that $\mathrm{Mon}$ is complete coanalytic, we will use \emph{Fréchet sequence spaces}. Loosely speaking, these spaces work as ``Fréchet space versions'' of the classic $c_0$ and $\ell_p$ sequence spaces --- for now, we will only need the ``Fréchet $c_0$ sequence space''. We recall the definition of K\"{o}the matrices.

\begin{definition}\label{DefinitionKothe}
    An $\N$-by-$\N$ matrix   $A=[a_{i,k}]_{(i,k)\in \N\times \N}$ of strictly positive reals is called a \emph{K\"{o}the} matrix if  $a_{i,k}\leq a_{i,k+1}$ for all $i,k\in\N$.\footnote{K\"{o}the matrices are usually assumed to be only matrices of \emph{nonnegative} numbers satisfying the other condition in Definition \ref{DefinitionKothe} and furthermore that  for all $i\in\N$ there is $k\in\N$ with $a_{i,k}\neq 0$. We chose to assume all coordinates of such matrices to be strictly  positive in order  to  avoid (unnecessarily) having to define  the ``inverse of $0$'' and ``multiplication by $\infty$'' (see Theorem \ref{ThmDieudonneGomes}).} Unless some emphasis is needed, we simply write $[a_{i,k}]$ for $[a_{i,k}]_{(i,k)\in \N\times \N}$.
    \end{definition}

We shall now recall the definition of certain Fréchet sequence spaces defined with respect to K\"{o}the matrices. Let $A=[a_{i,k}]$ be such a matrix and let us define a sequence of norms $(\|\cdot\|_k)_k$ on the vector space $c_{00}$ of finitely supported sequences of real numbers. For each $k\in\N$ and $(x_i)_i\in c_{00}$, let 
\[\|(x_i)_i\|_k=\sup_{i\in\N} |x_ia_{i,k}|\]
and let $\lambda(A,k)$ denote the Banach space given by the completion of $(c_{00},\|\cdot\|_k)$.\footnote{Usually, when the numbers $a_{i,k}$ are allowed to be zero, $(\|\cdot\|_k)_k$ are only pseudo-norms, but in our case these are actual norms.} We identify each $\lambda_k(A,k)$ with a vector subspace of   $\R^\N$ --- we emphasize that $\R^\N$ is only considered as a vector space here and not as a Fréchet space --- and let 
\[\lambda_0(A)=\bigcap_{k\in\N}\lambda(A,k).\]
We endow $\lambda_0(A)$ with the Fréchet space structure given by the embedding
\[\lambda_0(A)\to \prod_{k\in \N} \lambda(A,k),\]
where this embedding at each coordinate is simply the inclusion $\lambda_0(A)\hookrightarrow \lambda(A,k)$.\footnote{In the literature $\lambda_0(A)$ is sometimes denoted by $c_0(A)$, e.g., \cite{MeiseVogt1997Book}.}

\begin{remark}
    The spaces $\lambda_0(A)$ should be seen as ``Fréchet $c_0$ sequence spaces''. There are also $\ell_p$ versions of these spaces  and we introduce them in Subsection \ref{SubsectionSDREFL} below.
\end{remark}

 The Dieudonné--Gomes Theorem characterizes when $\lambda_0(A)$ is a Montel space. We refer the reader to  \cite[Theorem 27.9 and Proposition  27.15]{MeiseVogt1997Book} for its proof. 
   
   \begin{theorem}[Dieudonné--Gomes]
 Given a K\"{o}the matrix $A=[a_{i,k}]$, the following assertions about  $\lambda_0(A)$ are equivalent.
 \begin{enumerate}
     \item\label{ThmDieudonneGomesItem1} $\lambda_0(A)$ is a Montel space.
     \item\label{ThmDieudonneGomesItem2} For all infinite $I\subseteq \N$ and all $j\in\N$ there is $k\in \N$ such that $\inf_{i\in I}a_{i,j}a_{i,k}^{-1}=0$.
     \item\label{ThmDieudonneGomesItem3} No infinite dimensional Fréchet subspace of   $\lambda_0(A)$ is normable.
     \item\label{ThmDieudonneGomesItem4} $\lambda_0(A)$ is reflexive. 
 \end{enumerate} \label{ThmDieudonneGomes}
\end{theorem}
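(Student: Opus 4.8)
The plan is to establish the implications $(2)\Rightarrow(1)$, $(1)\Rightarrow(3)$, $(3)\Rightarrow(2)$ together with $(1)\Rightarrow(4)$ and $(4)\Rightarrow(2)$; these close the two cycles $(1)\Rightarrow(3)\Rightarrow(2)\Rightarrow(1)$ and $(1)\Rightarrow(4)\Rightarrow(2)\Rightarrow(1)$ and give the full equivalence. Throughout I would use the concrete description $\lambda_0(A)=\{(x_i)_i\in\R^\N\mid \forall k,\ \lim_i x_ia_{i,k}=0\}$ with the increasing norms $\|(x_i)_i\|_k=\sup_i|x_ia_{i,k}|$.

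First the soft directions. For $(1)\Rightarrow(3)$ I would note that a closed subspace $Y$ of a Montel space is Montel, since a closed bounded subset of $Y$ is closed and bounded in $\lambda_0(A)$, hence compact; and that a normable Montel space must be finite dimensional, because its closed unit ball is bounded, closed, hence compact, while Riesz's lemma forbids a compact unit ball in infinite dimensions. For the two implications landing on $(2)$ I would argue by contraposition from a single construction. If $(2)$ fails, there are an infinite $I\subseteq\N$ and $j\in\N$ with $c_k:=\inf_{i\in I}a_{i,j}a_{i,k}^{-1}>0$ for every $k$. On the closed subspace $Y=\overline{\mathrm{span}}\{e_i\mid i\in I\}$ one has $a_{i,k}\le a_{i,j}/c_k$ for $i\in I$, whence every $\|\cdot\|_k$ is dominated by a multiple of $\|\cdot\|_j$ on $Y$; thus $Y$ is normable, infinite dimensional, and in fact isomorphic to $c_0$ via $x\mapsto(x_ia_{i,j})_{i\in I}$. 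This single $Y$ simultaneously violates $(3)$, giving $(3)\Rightarrow(2)$, and is non-reflexive, so by heredity of reflexivity to closed subspaces of reflexive Fréchet spaces it violates $(4)$, giving $(4)\Rightarrow(2)$. Finally, $(1)\Rightarrow(4)$ is the classical fact that a Fréchet--Montel space is reflexive (a barrelled space whose bounded sets are relatively compact is reflexive), which I would cite rather than reprove.

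The heart of the argument, and the step I expect to be the main obstacle, is $(2)\Rightarrow(1)$. Since $\lambda_0(A)$ is metrizable it suffices to show that every bounded sequence $(x^{(n)})_n$ has a convergent subsequence. Boundedness gives $M_k:=\sup_n\|x^{(n)}\|_k<\infty$ for each $k$, so $|x^{(n)}_i|\le M_1/a_{i,1}$ for all $n,i$, and a diagonal extraction yields a subsequence converging coordinatewise to some $x=(x_i)_i$. The decisive point is a \emph{uniform tail estimate}: for each $k$ and $\eps>0$ there is a finite set $F$ with $|x^{(n)}_i a_{i,k}|<\eps$ for all $i\notin F$ and all $n$. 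If this failed, I would extract an infinite set $I$ of indices $i$ together with witnesses $n_i$ satisfying $|x^{(n_i)}_i a_{i,k}|\ge\eps$; combining with $|x^{(n_i)}_i a_{i,k'}|\le M_{k'}$ gives $a_{i,k}a_{i,k'}^{-1}\ge\eps/M_{k'}>0$ for all $i\in I$ and all $k'$, directly contradicting condition $(2)$ applied with $j=k$ and this $I$. Granting the uniform tail estimate, the finitely many coordinates indexed by $F$ converge by coordinatewise convergence, while the tail is uniformly $\eps$-small for every $x^{(n)}$ and for the limit $x$; this shows both that $x\in\lambda_0(A)$ and that $\|x^{(n)}-x\|_k\to 0$ for each $k$, i.e.\ $x^{(n)}\to x$ in $\lambda_0(A)$. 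Hence $\lambda_0(A)$ is Montel, completing the cycle.
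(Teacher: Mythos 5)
Your proof is correct, but note that the paper itself does not prove this theorem at all: it is quoted as a classical result with a pointer to Meise--Vogt, \emph{Introduction to Functional Analysis}, Theorem 27.9 and Proposition 27.15. So the comparison is really between your argument and the textbook one. Your route is the elementary, self-contained one: the two cycles $(1)\Rightarrow(3)\Rightarrow(2)\Rightarrow(1)$ and $(1)\Rightarrow(4)\Rightarrow(2)\Rightarrow(1)$ do close up, and each step checks out. In particular, the negation of (2) correctly yields an infinite $I$ and $j$ with $c_k=\inf_{i\in I}a_{i,j}a_{i,k}^{-1}>0$ for all $k$, and on $Y=\{x\in\lambda_0(A)\mid \operatorname{supp}(x)\subseteq I\}$ (which is exactly $\overline{\mathrm{span}}\{e_i\mid i\in I\}$, since the coordinate functionals are continuous) all the norms collapse to multiples of $\|\cdot\|_j$, giving the copy of $c_0$ that kills both (3) and, via heredity of reflexivity to closed subspaces of Fréchet spaces (semi-reflexivity is inherited, and closed subspaces of Fréchet spaces are barrelled), also (4). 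The key step $(2)\Rightarrow(1)$ is also sound: the extraction of an infinite set $I$ with witnesses $n_i$, the inequality $a_{i,k}a_{i,k'}^{-1}\geq \eps/M_{k'}$ for all $k'$ (note $M_{k'}>0$ automatically in this situation), and the contradiction with (2) applied to $j=k$ are all valid, and the tail estimate plus coordinatewise convergence does give both $x\in\lambda_0(A)$ and $\|x^{(n)}-x\|_k\to 0$. What your approach buys is a proof using nothing beyond diagonal extraction and the sequential characterization of the Montel property for metrizable spaces; the cited Meise--Vogt treatment instead routes the Montel characterization through the machinery of linking Banach spaces and compactness/precompactness of the canonical inclusion maps, which is heavier but situates the result inside a broader theory (e.g., it simultaneously handles the Schwartz property and the $\lambda_p(A)$ scale that the paper later invokes in Subsection 4.5).
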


Theorem \ref{ThmDieudonneGomes} motivates the next definition.

\begin{definition} \label{DefinitionKotheMontel}
    Let $A=[a_{i,k}]$ be an $\N$-by-$\N$ matrix of positive reals.  We call $A$ a \emph{Montel--K\"{o}the matrix} if $A$ is a K\"{o}the matrix such that      for all infinite $I\subseteq \N$ and all $j\in\N$ there is $k\in \N$ such that $\inf_{i\in I}a_{i,j}a_{i,k}^{-1}=0$.
\end{definition}

In order to show that the coding for the separable Montel spaces is complete coanalytic, we  now show that the subset of Montel--K\"{o}the matrices is complete coanalytic.

 \begin{prop} \label{PropMKComCoanalitic}
 Consider 
 \[ \mathrm{K}=\left\{[a_{i,k}]\in \N^{\N\times \N}\mid [a_{i,k}]  \text{ is a  K\"{o}the matrix}\right\}.\]
     Then $K$ is a Polish space and its  subset 
     \[ \mathrm{MK}=\left\{[a_{i,k}] \in\ \mathrm{K}\mid [a_{i,k}] \text{ is a Montel--K\"{o}the matrix}\right\}\]
     is complete coanalytic.
 \end{prop}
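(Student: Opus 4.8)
The plan is to dispatch the three assertions in order---$\mathrm{K}$ is Polish, $\mathrm{MK}$ is coanalytic, and $\mathrm{MK}$ is $\Pi^1_1$-hard---the last being the real content. First, $\N^{\N\times\N}$ with the product of the discrete topologies is Polish (homeomorphic to Baire space). Since the K\"othe condition $a_{i,k}\le a_{i,k+1}$ is, for each fixed $(i,k)$, a clopen condition, $\mathrm{K}=\bigcap_{i,k}\{A\mid a_{i,k}\le a_{i,k+1}\}$ is closed in $\N^{\N\times\N}$, hence Polish. For coanalyticity of $\mathrm{MK}$, I would count quantifiers. Writing $[\N]^{\infty}$ for the (Polish) space of infinite subsets of $\N$, the set
\[
B=\Big\{(A,I)\in \mathrm{K}\times[\N]^{\infty}\ \Big|\ \forall j\,\exists k\,\forall \eps\in\Q^+\,\exists i\in I,\ a_{i,j}<\eps\, a_{i,k}\Big\}
\]
is Borel, because $i\in I$ is clopen in $I$, the inequality $a_{i,j}<\eps a_{i,k}$ is clopen in $A$, and all displayed quantifiers are countable. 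As its inner condition is exactly $\inf_{i\in I}a_{i,j}a_{i,k}^{-1}=0$, we have $\mathrm{MK}=\{A\mid \forall I\in[\N]^{\infty},\ (A,I)\in B\}$, whose complement is the projection of a Borel set and hence analytic; so $\mathrm{MK}$ is coanalytic.

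For $\Pi^1_1$-hardness I would reduce from
\[
\mathrm{NC}=\Big\{(x_n)_n\in (\N^\N)^{\N}\ \Big|\ (x_n)_n \text{ has no convergent subsequence}\Big\},
\]
which is complete coanalytic since $\N^\N$ is a non-$\sigma$-compact Polish space. The elementary observation driving the reduction is that relative compactness in $\N^\N$ is coordinatewise boundedness; thus $(x_n)_n$ has a convergent subsequence iff there is an infinite $I\subseteq\N$ with $\{x_i(m)\mid i\in I\}$ bounded for every coordinate $m$, so that $(x_n)_n\in\mathrm{NC}$ iff for every infinite $I$ there is a coordinate $m$ with $\sup_{i\in I}x_i(m)=\infty$.

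I would then build a continuous $\Phi\colon (\N^\N)^\N\to \mathrm{K}$ with $\Phi^{-1}(\mathrm{MK})=\mathrm{NC}$. Fix a bijection $\langle\cdot,\cdot\rangle\colon\N\times\N\to\N$ and, given $x=(x_i)_i$, set $\tilde x_i(\langle m,t\rangle)=x_i(m)$, so each coordinate of $x_i$ recurs at arbitrarily large positions of $\tilde x_i$; then put
\[
a_{i,k}=\prod_{n=1}^{k}\big(\tilde x_i(n)+1\big).
\]
This is a product of positive integers, nondecreasing in $k$, so $\Phi(x)=[a_{i,k}]\in\mathrm{K}$, and each entry depends on finitely many coordinates of $x$, so $\Phi$ is continuous. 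For an infinite $I$ and $k>j$ one has $a_{i,j}a_{i,k}^{-1}=\prod_{n=j+1}^{k}(\tilde x_i(n)+1)^{-1}$, and since the factors are $\ge 1$ this infimum over $i\in I$ vanishes exactly when $\sup_{i\in I}\tilde x_i(n)=\infty$ for some $n\in(j,k]$. Hence $\Phi(x)\in\mathrm{MK}$ iff for every infinite $I$ and every $j$ some coordinate $n>j$ is unbounded on $I$, i.e.\ iff for every infinite $I$ there are infinitely many such coordinates. The repetition closes the gap with $\mathrm{NC}$: if $\sup_{i\in I}x_i(m)=\infty$ for some $m$, then $\tilde x_i(\langle m,t\rangle)=x_i(m)$ is unbounded on $I$ for every $t$, giving infinitely many unbounded coordinates of $\tilde x$, while conversely the values of $\tilde x$ are those of $x$. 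Thus $\Phi(x)\in\mathrm{MK}$ iff for every infinite $I$ some coordinate of $x$ is unbounded on $I$, i.e.\ iff $x\in\mathrm{NC}$.

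The main obstacle is exactly the universal quantifier ``$\forall j$'' in the Montel--K\"othe condition: a naive product over the original coordinates would make $\mathrm{MK}$ correspond to ``every infinite $I$ has \emph{infinitely many} unbounded coordinates,'' which is strictly stronger than ``no convergent subsequence.'' Making each coordinate recur at unboundedly large column indices is precisely what upgrades ``at least one unbounded coordinate'' to ``infinitely many,'' so that the two conditions coincide; verifying this equivalence (together with the harmless reduction to the case $k>j$) is the technical heart of the argument.
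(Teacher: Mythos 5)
Your proof is correct. It shares the paper's skeleton---Polishness of $\mathrm{K}$ is immediate, coanalyticity of $\mathrm{MK}$ is quantifier counting, and $\Pi^1_1$-hardness comes from the very same key fact (\cite[Exercise 27.15]{KechrisBook1995}, that the set of sequences without convergent subsequences in a non-$\sigma$-compact Polish space is complete coanalytic)---but the reduction itself is implemented quite differently, and the comparison is instructive. The paper front-loads all the work into the choice of the source space: it reduces from sequences of \emph{strictly increasing sequences of factorials}, so that the reduction map is just ``stack the terms as rows of a matrix.'' Increasing rows give the K\"othe condition for free; discreteness of the set of factorials makes a convergent subsequence produce eventually constant columns (so the infima $\inf_{i\in I}a_{i,1}a_{i,k}^{-1}$ stay positive); monotonicity of rows means an unbounded column stays unbounded to the right of any $j$, which handles the ``$\forall j$'' quantifier; and factorial growth forces $\lambda(i)_j\lambda(i)_k^{-1}\le 1/n(i)\to 0$ along any unbounded column with $k>j$. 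You instead reduce from raw Baire space $\N^\N$ and put the work into the map: the cumulative products $a_{i,k}=\prod_{n\le k}\bigl(\tilde x_i(n)+1\bigr)$ supply monotone integer rows and turn the ratios $a_{i,j}a_{i,k}^{-1}$ into products of reciprocals of individual entries, while the coordinate duplication $\tilde x_i(\langle m,t\rangle)=x_i(m)$ makes every coordinate recur cofinally, which is exactly what upgrades ``some coordinate unbounded on $I$'' to ``coordinates unbounded on $I$ beyond every $j$''---the same obstacle the paper resolves via monotonicity of its rows, and which you correctly identify as the technical heart. Your route buys the most standard possible source space (no need to verify that the factorial-sequence space is Polish and non-$\sigma$-compact) and a transparent isolation of the role of the ``$\forall j$'' quantifier, at the cost of a more elaborate map; the paper's route buys a one-line reduction map at the cost of a bespoke source space. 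Both reductions are continuous, and all the small verifications in your argument (the restriction to $k>j$ is harmless since $a_{i,j}a_{i,k}^{-1}\ge 1$ for $k\le j$; a finite product of factors is unbounded over $I$ iff some single factor is, by pigeonhole over the finitely many positions in $(j,k]$) check out.
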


 \begin{proof}
     Since $\N^{\N\times \N} $ is a Polish space, in order to show that $ \mathrm{K}$ is a Polish space, it is enough to notice that $\mathrm{K}$ is a $G_\delta$ (\cite[Theorem 3.11]{KechrisBook1995}). But this is immediate since 
     \[\mathrm{K}=\bigcap_{(i,k)\in \N\times \N}\pi_{i,k}^{-1}\left(\left\{(r,s)\in \N^2\mid r\leq s\right\}\right),\]
     where each $\pi_{i,k}$ is the canonical projection  $ \N^{\N\times\N}\to \N^2$ with respect to the   coordinates $\{(i,k),(i,k+1)\}$. Since each $\pi_{i,k}$ is continuous, the subsets in the intersection above are closed. Hence, they are $G_\delta$ and, consequently, so is $ {\mathrm K}$.

     The fact that $ \mathrm{MK}$ is coanalytic follows easily as well since 
     \[ \mathrm{MK}=\bigcap_{\substack{I\subseteq \N\\ |I|=\infty}}\bigcap_{j\in\N}\bigcup_{k\in\N} \left\{[a_{i,k}]\in \mathrm{K}\mid \inf_{i\in I}a_{i,j}a_{i,k}^{-1}=0\right\}\] 
     and the condition ``$\inf_{i\in I}a_{i,j}a_{i,k}^{-1}=0$'' is Borel as 
     \[\left\{[a_{i,k}]\in \mathrm{K}\mid \inf_{i\in I}a_{i,j}a_{i,k}^{-1}=0\right\}=\bigcap_{\eps\in \Q_+}\bigcup_{i\in I}\left\{[a_{i,k}]\in \mathrm{K}\mid a_{i,j}a_{i,k}^{-1}<\eps\right\}.\]
     
     We shall now show that $ \mathrm{MK}$ is complete coanalytic. Firstly, let
     \[X=\left\{(\lambda_k)_k\in \{n!\mid n\in\N\}^\N\mid  \lambda_k< \lambda_{k+1},\ k\in\N\right\},\]
i.e., $X$ is the set of strictly increasing sequences of factorials. Being a $G_\delta$ subset of the set of all sequences of factorials,  $X$ is a Polish space. Moreover,  it is straightforward to check that $X$ is not  $\sigma$-compact. Therefore, the set
\[S=\left\{(\bar\lambda(i))_i\in X^\N\mid (\bar\lambda(i))_i\text{ does not have a convergent subsequence}\right\}\]
is complete coanalytic (see \cite[Exercise 27.15]{KechrisBook1995}). Let us show that there is a Borel reduction of   $S$ to $ \mathrm{MK}$. Indeed, consider the map $f\colon X^\N\to  \mathrm K$ which takes a sequence $ (\bar\lambda(i))_i$ and forms an $\N$-by-$\N$ matrix  by stacking each of the elements $\bar\lambda(i)$ as a line. Precisely, 
\begin{align*}
    f\colon X^\N&\to  \mathrm{K} \\
    (\bar\lambda(i))_i&\mapsto [\lambda(i)_k]_{(i,k)\in \N\times \N}.
    \end{align*}
   Since each of the sequences $\bar \lambda(i)=(\lambda(i)_k)_k$ is increasing, the image of $f$ is a K\"{o}the matrix as desired.  As $f$ is clearly Borel, we are left to show that $f^{-1}(\mathrm{MK})=S$.

   Suppose $(\bar \lambda(i))_i$ is not in $S$. So, there is an infinite $I\subseteq \N$  such that $(\bar \lambda(i))_{i\in I}$ is convergent in $X$. This means that for each $k\in\N$, the limit 
   \[\lambda_k=\lim_{i\to \infty, i\in I}\lambda(i)_k\]
   exists. Moreover, since the topology in $\{n!\mid n\in\N\}$ is discrete,   for each $k\in\N$ there is $i(k)\in I$ such that 
   \[\lambda(i)_k=\lambda_k\ \text{ for all } i\in I \text{ with }i\geq i(k).\]
   Hence, for each $k\in \N$, we have
   \[\inf_{i\in I}\lambda(i)_1\lambda(i)_{k}^{-1}= \min\left\{\lambda(\ell)_1\lambda(\ell)_{k}^{-1}\mid \ell\in I, \ell\leq \max\{i(1),i(k)\}\right\}>0.\]
   So, $f((\bar \lambda(i))_i)\not\in \mathrm{MK}$.

   Suppose now that $(\bar \lambda(i))_i$ is  in $S$ and fix an infinite $I\subseteq \N$ and $j\in \N$. As $(\bar \lambda(i))_i$  does not have  convergent subsequences, there must be $k\in \N$ such that $(\lambda(i)_k)_{i\in I}$ is unbounded. Indeed, otherwise for each $k\in\N$ there would be $M_k>0$ such that $\lambda(i)_k\in [0,M_k]$ for all $i\in I$. So, for each $i\in I$, $\bar \lambda(i)=(\lambda(i)_k)_k$ would be an element in the compact space $\prod_k[0,M_k]$. In particular, the compactness of this metrizable space   would imply that   $(\bar \lambda(i))_i$ has a convergent subsequence; contradiction. Moreover, since each of the elements of $X$ is an increasing sequence, if $(\lambda(i)_k)_{i\in I}$ is unbounded, so is $(\lambda(i)_{k'})_{i\in I}$ for all $k'\geq k$. Therefore, we can pick $k>j$ such that $(\lambda(i)_k)_{i\in I}$ is unbounded.

We are left to notice that 
\begin{equation}\label{Eq.Inf.}
    \inf_{i\in I}\lambda(i)_j\lambda(i)_k^{-1}=0
    \end{equation} As elements of $X$ are sequences of factorials, for each $i\in I$, we can pick $n(i)$ such that 
\[n(i)!=\lambda(i)_k.\] 
Hence, As $k>j$ and since elements of $X$ are strictly increasing sequences of factorials, we must have that  \[\lambda(i)_j\lambda(i)_k^{-1}\leq \frac{1}{n(i)}\ \text{ for all }i\in I.\]
Finally, as  $(\lambda(i)_k)_{i\in I}$ is unbounded, so is $(n(i))_i$.  Hence, \eqref{Eq.Inf.} holds and we are done.
 \end{proof}

\begin{cor}\label{CorMK*CompleteCoan}
    Consider 
     \[\mathrm{K}^*=\left\{[a_{i,k}]\in \mathrm{K}\mid \forall i\in \N,\forall k\geq i,\ a_{i,k}=a_{i,i}\right\}.\]
     Then $\mathrm K^*$ is a Polish space and its subset 
    \[\mathrm{MK}^*=\mathrm{MK}\cap \mathrm K^*\]
    is complete coanalytic.
\end{cor}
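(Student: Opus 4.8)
The plan is to recycle the proof of Proposition \ref{PropMKComCoanalitic}, modifying the Borel reduction so that its image lands inside $\mathrm K^*$. First I would dispose of the easy assertions. The set $\mathrm K^*$ is closed in the Polish space $\mathrm K$, since
\[\mathrm K^*=\bigcap_{i\in\N}\bigcap_{k\ge i}\left\{[a_{i,k}]\in\mathrm K\mid a_{i,k}=a_{i,i}\right\},\]
and each member of this countable intersection is the preimage of the diagonal $\{(r,s)\in\N^2\mid r=s\}$ under the continuous projection onto the coordinates $(i,k)$ and $(i,i)$, hence closed. Thus $\mathrm K^*$ is Polish, and in particular Borel. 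Since $\mathrm{MK}$ is coanalytic by Proposition \ref{PropMKComCoanalitic}, the set $\mathrm{MK}^*=\mathrm{MK}\cap\mathrm K^*$ is coanalytic as well.

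For completeness, I would reduce the same complete coanalytic set $S\subseteq X^\N$ used in Proposition \ref{PropMKComCoanalitic}, replacing the map $f$ by its \emph{diagonally truncated} version $\tilde f\colon X^\N\to\mathrm K$ given by
\[\tilde f\big((\bar\lambda(i))_i\big)=[b_{i,k}],\qquad b_{i,k}=\lambda(i)_{\min\{i,k\}}.\]
Because each $\bar\lambda(i)$ is a strictly increasing sequence of factorials, row $i$ of $[b_{i,k}]$ is non-decreasing and becomes constant equal to $\lambda(i)_i$ once $k\ge i$; hence $\tilde f$ takes values in $\mathrm K^*$, and it is plainly Borel, each entry depending on a single input coordinate. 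It remains to verify that $\tilde f^{-1}(\mathrm{MK}^*)=S$, which amounts to rerunning the two computations from Proposition \ref{PropMKComCoanalitic} and checking that truncation does no harm.

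If $(\bar\lambda(i))_i\in S$, I would fix an infinite $I\subseteq\N$ and $j\in\N$, and choose $k>j$ with $(\lambda(i)_k)_{i\in I}$ unbounded exactly as before. Deleting the finitely many indices $i\in I$ with $i<k$ does not destroy unboundedness, so $(\lambda(i)_k)_{i\in I,\,i\ge k}$ is still unbounded; and for those rows one has $j<k\le i$, whence $b_{i,j}=\lambda(i)_j$ and $b_{i,k}=\lambda(i)_k$, so $\inf_{i\in I}b_{i,j}b_{i,k}^{-1}\le\inf_{i\in I,\,i\ge k}\lambda(i)_j\lambda(i)_k^{-1}=0$ by the factorial estimate of Proposition \ref{PropMKComCoanalitic}. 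Conversely, if $(\bar\lambda(i))_i\notin S$, fix an infinite $I$ with $(\bar\lambda(i))_{i\in I}$ convergent, so for each $\ell$ the values $(\lambda(i)_\ell)_{i\in I}$ are eventually constant, hence bounded by some $M_\ell$. Taking $j=1$, I note $b_{i,1}=\lambda(i)_1\ge1$ for all $i$, while for each fixed $k$ the denominators satisfy $b_{i,k}=\lambda(i)_{\min\{i,k\}}\le\max\{M_1,\dots,M_k\}$ (using $b_{i,k}=\lambda(i)_k\le M_k$ when $i\ge k$ and $b_{i,k}=\lambda(i)_i\le M_i$ when $i<k$). Therefore $\inf_{i\in I}b_{i,1}b_{i,k}^{-1}>0$ for every $k$, so $\tilde f((\bar\lambda(i))_i)$ fails the Montel--K\"othe condition for the pair $(I,1)$ and lies outside $\mathrm{MK}^*$.

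The main obstacle, and the only place where care is genuinely needed, is exactly this compatibility check: one must confirm that capping each row at its diagonal value neither spoils the vanishing infimum in the $S$ case nor inflates the denominators in the complementary case. Both points are handled by the two elementary observations above — that deleting finitely many rows preserves unboundedness, and that the constant tails of the rows take only finitely many relevant values below column $k$ — so the equivalence $\tilde f^{-1}(\mathrm{MK}^*)=S$ holds and $\mathrm{MK}^*$ is complete coanalytic.
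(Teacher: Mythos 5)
Your proof is correct and rests on essentially the same device as the paper's: the diagonal truncation $a_{i,k}\mapsto a_{i,\min\{i,k\}}$. The paper uses this truncation as a Borel reduction of $\mathrm{MK}$ to $\mathrm{MK}^*$ (leaving the verification to the reader), whereas you compose it with the reduction of Proposition \ref{PropMKComCoanalitic} and check directly that the composite reduces $S$ to $\mathrm{MK}^*$; the underlying computation is the same, and your version simply makes explicit the check the paper omits.
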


\begin{proof}
    The condition ``$\forall i\in \N,\forall k\geq i,\ a_{i,k}=a_{i,i}$''  defines a $G_\delta$ subset of $\mathrm K$. So, $\mathrm K^*$ is Polish (see \cite[Theorem 3.11]{KechrisBook1995}). Since $\mathrm{MK}$ is coanalytic (Proposition \ref{PropMKComCoanalitic}),  $\mathrm{MK}^*$  is coanalytic. Finally,   $\mathrm{MK}^*$ is complete coanlytic since $\mathrm{MK}$ is complete coanalytic (Proposition \ref{PropMKComCoanalitic}) and the map $f\colon \mathrm K\to \mathrm K^*$  given by letting
    \[f(A)_{i,k}= \left\{\begin{array}{ll}
a_{i,k},& \text{ if } i\geq k  \\
      a_{i,i}   ,& \text{ if } i<k, 
    \end{array}\right.\]
    for all $A=[a_{i,k}]\in \mathrm K$, is a Borel reduction of $\mathrm{MK}$ to $\mathrm{MK}^*$. Indeed, it is completely straightforward to check that  $f$ is Borel and that $f^{-1}(\mathrm{MK}^*)=\mathrm{MK}$;  we leave this trouble to the reader.
\end{proof}
The following is the main result of this subsection.
\begin{theorem}\label{ThmMonCompleteCoanalytic}
    $\mathrm{Mon}$ is a complete coanalytic subset of $\SF$. In particular, $\mathrm{Mon}$ is not analytic.
\end{theorem}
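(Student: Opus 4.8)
The plan is to prove that $\mathrm{Mon}$ is $\Pi^1_1$-hard and to combine this with the fact that it is coanalytic (Proposition \ref{montel}). For the hardness I would exhibit a Borel reduction of the complete coanalytic set $\mathrm{MK}^*$ (Corollary \ref{CorMK*CompleteCoan}) into $\mathrm{Mon}$. The key is to attach to each K\"othe matrix a concrete copy of its sequence space $\lambda_0(A)$ living inside $C(\R)$, in a way that depends Borel-measurably on $A$. Once this is done, the Dieudonné--Gomes Theorem \ref{ThmDieudonneGomes} supplies the conceptual content of the reduction essentially for free: $\lambda_0(A)$ is Montel exactly when $A$ satisfies the Montel--K\"othe condition, i.e.\ (on $\mathrm K^*$) exactly when $A\in\mathrm{MK}^*$; and, since being Montel is an isomorphic invariant, any faithful Borel realization of $\lambda_0(A)$ in $\SF$ will belong to $\mathrm{Mon}$ if and only if $A\in\mathrm{MK}^*$.

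The heart of the argument --- and the step I expect to be the main obstacle --- is the explicit Borel realization of $\lambda_0(A)$ inside $C(\R)$. I would fix, once and for all, pairwise disjoint compact intervals $J_{i,k}\subseteq (k-1,k)$ for $(i,k)\in\N\times\N$ (each unit interval comfortably holds countably many disjoint compact subintervals) and fixed continuous ``tent'' functions $\tau_{i,k}$ supported on $J_{i,k}$ with $\max\tau_{i,k}=1$. Given $A=[a_{i,k}]$, set
\[
g_i^A=\sum_{k\in\N}a_{i,k}\,\tau_{i,k}\in C(\R),
\]
which is a bona fide element of $C(\R)$ because on every $[-n,n]$ it reduces to a finite sum. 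Since the supports are pairwise disjoint and the pseudonorm $\|\cdot\|_n=\sup_{[-n,n]}|\cdot|$ only detects the bumps with $k\le n$, the monotonicity $a_{i,k}\le a_{i,k+1}$ gives, for every finitely supported scalar sequence $(c_i)$,
\[
\Bigl\|\sum_i c_i\,g_i^A\Bigr\|_n=\sup_{i\in\N}|c_i|\,a_{i,n},
\]
which is precisely the $n$-th pseudonorm of $\sum_i c_i e_i$ in $\lambda_0(A)$. Thus $(g_i^A)_i$ and the canonical basis $(e_i)_i$ of $\lambda_0(A)$ carry identical pseudonorms; as both sequences are total in their respective closed spans, Lemma \ref{basic} yields $\overline{\mathrm{span}}\{g_i^A\mid i\in\N\}\cong\lambda_0(A)$. (Restricting to $\mathrm K^*$ is merely convenient --- eventually constant rows make each $g_i^A$ bounded --- but the computation is uniform over all of $\mathrm K$.)

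It remains to verify Borelness and assemble the reduction. For each $i$ the map $A\mapsto g_i^A$ is continuous from $\mathrm K^*$ into $C(\R)$, since on each $[-n,n]$ it depends linearly on the finitely many coordinates $a_{i,k}$ with $k\le n$; hence $A\mapsto (g_i^A)_i\in C(\R)^\N$ is Borel, and composing with the Borel map $\Phi$ of Lemma \ref{phi} gives a Borel map
\[
\Theta\colon \mathrm K^*\to\SF,\qquad \Theta(A)=\overline{\mathrm{span}}\{g_i^A\mid i\in\N\}\cong\lambda_0(A).
\]
By the Dieudonné--Gomes Theorem \ref{ThmDieudonneGomes} together with the isomorphic invariance of the Montel property, $\Theta(A)\in\mathrm{Mon}$ if and only if $\lambda_0(A)$ is Montel if and only if $A\in\mathrm{MK}^*$; that is, $\Theta^{-1}(\mathrm{Mon})=\mathrm{MK}^*$. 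Since $\mathrm{MK}^*$ is complete coanalytic (Corollary \ref{CorMK*CompleteCoan}) and $\Theta$ is Borel, $\mathrm{Mon}$ is $\Pi^1_1$-hard; being coanalytic by Proposition \ref{montel}, it is therefore complete coanalytic. In particular $\mathrm{Mon}$ cannot be analytic, for an analytic complete coanalytic set would be Borel and hence force every coanalytic set to be Borel.
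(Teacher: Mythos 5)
Your proposal is correct, and its skeleton coincides with the paper's: coanalyticity from Proposition \ref{montel}, plus $\Pi^1_1$-hardness via a Borel map $A\mapsto$ (a copy of $\lambda_0(A)$) that reduces $\mathrm{MK}^*$ to $\mathrm{Mon}$ through the Dieudonn\'e--Gomes equivalence. Where you genuinely diverge is in the realization of $\lambda_0(A)$ inside $C(\R)$, which is the paper's main technical step (Lemma \ref{LemmaFunctionReducingMontel}). The paper builds an abstract ambient Fr\'echet sequence space $\Lambda(T)$ indexed by the countable set $T$ of eventually constant $\N$-valued sequences, fixes once and for all an isomorphism of $\Lambda(T)$ onto some $X\in\SF$, and sends $A$ to the image of the coordinate subspace $\Lambda(E(A))$ determined by the rows of $A$; this is why the reduction must be run on $\mathrm{K}^*$ rather than $\mathrm{K}$ --- the rows have to land in the countable index set $T$. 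Your construction instead encodes the matrix entries directly as heights of disjointly supported tent functions, and the computation $\bigl\|\sum_i c_i g_i^A\bigr\|_n=\sup_i|c_i|a_{i,n}$ (which is right: only the bumps with $k\le n$ are seen by $\|\cdot\|_n$, and monotonicity of the rows collapses the max over $k\le n$ to $k=n$) hands you the isometric identification of pseudonorms on $c_{00}$, whence Lemma \ref{basic} applies; Borelness then comes for free from Lemma \ref{phi}. This is more explicit and slightly more general --- it works uniformly on all of $\mathrm{K}$, so you could even reduce $\mathrm{MK}$ directly and skip Corollary \ref{CorMK*CompleteCoan} --- at the cost of losing the paper's picture of all the $\varphi(A)$ as literal subspaces of one fixed space, which the paper reuses verbatim for the $\lambda_p$ variants in Subsection \ref{SubsectionSDREFL}. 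Both routes are sound.
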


Before proving Theorem \ref{ThmMonCompleteCoanalytic}, we isolate the main technical lemma needed for its proof. 

\begin{lemma}\label{LemmaFunctionReducingMontel}
    There is a Borel map $\varphi \colon \mathrm K^*\to \SF$ such that $\varphi(A)$ is isomorphic to $\lambda_0(A)$ for all $A\in \mathrm K^*$.
\end{lemma}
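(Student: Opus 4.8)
The plan is to realize each $\lambda_0(A)$ concretely inside $C(\R)$ by building, for every index $i$, a continuous function $g^A_i$ that plays the role of the $i$-th canonical unit vector $e_i\in\lambda_0(A)$, and then setting $\varphi(A)=\overline{\mathrm{span}}\{g^A_i\mid i\in\N\}$. The geometry will be fixed once and for all, independently of $A$: partition $[0,\infty)$ into the layers $L_k=[k-1,k]$ ($k\in\N$), and inside each $L_k$ choose pairwise disjoint closed subintervals $J_{i,k}\subseteq(k-1,k)$, $i\in\N$ (for instance accumulating at the left endpoint $k-1$). Fix a continuous ``tent'' $\tau_{i,k}\colon\R\to[0,1]$ supported in $J_{i,k}$ with maximum value $1$. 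Given $A=[a_{i,k}]\in\mathrm K^*$, define $g^A_i\in C(\R)$ by $g^A_i=\sum_{k\in\N}a_{i,k}\,\tau_{i,k}$; that is, on the $i$-th slot of layer $k$ it is a bump of height $a_{i,k}$, and it is $0$ everywhere else (in particular on $(-\infty,0)$). Each $g^A_i$ is continuous because every compact set meets only finitely many layers, in each of which $g^A_i$ is a single bump vanishing near the layer boundaries.

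The heart of the argument is a pseudonorm computation. Writing $(\|\cdot\|_n)_n$ for the standard pseudonorms on $C(\R)$ and using that the bumps of distinct indices have disjoint supports within each layer, for any finite scalars $x_1,\dots,x_\ell$ one gets
\[\Big\|\sum_{i=1}^\ell x_i g^A_i\Big\|_n=\sup_{t\in[0,n]}\Big|\sum_{i=1}^\ell x_i g^A_i(t)\Big|=\max_{k\le n}\max_{i\le\ell}|x_i|a_{i,k}=\max_{i\le\ell}|x_i|a_{i,n},\]
where the last equality uses that each row $(a_{i,k})_k$ is nondecreasing (the defining property of a K\"othe matrix), so $\max_{k\le n}a_{i,k}=a_{i,n}$. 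The right-hand side is exactly $\big\|\sum_{i\le\ell}x_i e_i\big\|_n$ computed in $\lambda_0(A)$. Thus the two pseudonorm sequences agree term by term, whence $(g^A_i)_i\sim(e_i)_i$ in the Fréchet sense of equivalence. Since $c_{00}$ is dense in $\lambda_0(A)$ (the canonical unit vectors form a Schauder basis of $\lambda_0(A)$), we have $\overline{\mathrm{span}}\{e_i\mid i\in\N\}=\lambda_0(A)$, while $\overline{\mathrm{span}}\{g^A_i\mid i\in\N\}=\varphi(A)$ by construction; Lemma \ref{basic} then yields $\varphi(A)\cong\lambda_0(A)$.

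It remains to check that $\varphi$ is Borel, and here the fixed geometry pays off. Because the slots $J_{i,k}$ and tents $\tau_{i,k}$ do not depend on $A$ and only the heights vary, on each compact interval $[0,n]$ the restriction of $g^A_i$ depends linearly, hence continuously, on the finitely many coordinates $a_{i,1},\dots,a_{i,n}$ of $A$; since each $A\mapsto a_{i,k}$ is continuous, $A\mapsto g^A_i$ is continuous from $\mathrm K^*$ into $C(\R)$, in particular Borel. Composing the induced Borel map $A\mapsto(g^A_i)_i\in C(\R)^\N$ with the Borel map $\Phi$ of Lemma \ref{phi} gives $\varphi(A)=\Phi((g^A_i)_i)=\overline{\mathrm{span}}\{g^A_i\mid i\in\N\}\in\SF$, so $\varphi$ is Borel. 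I expect the only delicate points to be bookkeeping rather than conceptual: arranging the $J_{i,k}$ so that bumps of different indices never overlap (which is precisely what turns the pseudonorm computation into an exact equality instead of a mere two-sided estimate), and ensuring continuity of each $g^A_i$ at the accumulation points of the slots inside a layer as well as across layer boundaries. Both are handled by the explicit choice of tents supported in the open interiors and vanishing near the endpoints of their slots.
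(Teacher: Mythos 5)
Your proof is correct, but it takes a genuinely different route from the paper's. The paper builds one fixed countable index set $T$ of eventually constant integer sequences, forms the ambient sequence space $\Lambda(T)$, fixes a single isomorphism $F\colon\Lambda(T)\to X\subseteq C(\R)$ once and for all, and sends $A$ to $F[\Lambda(E(A))]$, where $E(A)\subseteq T$ collects the rows of $A$; Borelness is then checked by testing countably many rational finitely supported vectors against basic open sets. Crucially, that scheme is the entire reason the lemma is stated for $\mathrm K^*$ rather than $\mathrm K$: the eventually-constant-row condition is what places each row of $A$ inside the countable set $T$. You instead realize the unit vectors of $\lambda_0(A)$ directly as explicit bump functions in $C(\R)$ whose heights are the matrix entries, obtain an exact term-by-term identity of pseudonorms (so Lemma \ref{basic} applies with constant $1$), and get Borelness from the continuity of $A\mapsto g^A_i$ composed with Lemma \ref{phi}. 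Your construction never uses the defining property of $\mathrm K^*$ and in fact defines a Borel map on all of $\mathrm K$ (indeed for real-entried K\"othe matrices), which is a mild strengthening; the trade-off is that you must do the support-disjointness and continuity bookkeeping yourself, whereas in the paper the isomorphism $\Lambda(E(A))\cong\lambda_0(A)$ is immediate from unfolding definitions inside a single fixed copy of $\Lambda(T)$. Both arguments are sound; yours is arguably more self-contained and slightly more general.
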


\begin{proof}

For each $i\in\N$, let 
\[\N(i)=\left\{\bar n=(i,(n_k)_k)\in \{i\}\times \N^\N\mid n_k=n_i,\ \forall k\geq i\right\}.\]
 We then let
\[T=\bigsqcup_{i\in\N}\N(i),\]
so, $T$ is countable.  Let
\[c_{00}(T)=\left\{(x_{\bar n})_{\bar n\in T}\in \R^T\mid |\{\bar n\in T\mid x_{\bar n}\neq 0\}|<\infty\right\}.\]  We define a sequence of norms $(\|\cdot\|_k)_k$ on $c_{00}(T)$ by letting 
\[\|(x_{\bar n})_{\bar n\in T}\|_k=\sup \left\{ |x_{\bar n}n_k|\mid \bar n=(i,(n_\ell)_\ell)\in T\right\}\]
for each $k\in\N$ and each $(x_{\bar n})_{\bar n\in T}\in c_{00}(T)$. Given $E\subseteq T$, we let 
\[c_{00}(E)=\left\{(x_{\bar n})_{\bar n\in T}\in c_{00}(T)\mid  x_{\bar n}= 0, \forall \bar n\in T\setminus E\right\}\]
and, for each $k\in\N$, we define  $\Lambda(E,k)$  as the completion of $(c_{00}(E),\|\cdot\|_k)$. We see each  $\Lambda(E,k)$ as a Banach subspace of $\Lambda(T,k)$; precisely, $\Lambda(E,k)$ is the $\|\cdot\|_k$-norm closure of $c_{00}(E)$ in $\Lambda(T,k)$. Identifying each $\Lambda(T,k)$ with a vector subspace of the vector space $\R^T$, all of $\Lambda(E,k)$ are seen as vector subspaces of $\R^T$.  We can then define 
\[\Lambda(E)=\bigcap_k\Lambda(E,k)\]
and endow $\Lambda(E)$ with   the Fréchet space structure given by the   embedding 
\[\Lambda(E)\to\prod_{k\in \N}\Lambda(E,k),\]
 where this embedding in each coordinate if simply the inclusion $\Lambda(E)\hookrightarrow \Lambda(E,k)$.
By the indentifications above, each $\Lambda(E)$ is a Fréchet subspace of $\Lambda(T)$.

For each $A=[a_{i,k}]\in\mathrm K^*$ and $i\in \N$, let $\bar n(i)=(i,(n(i)_k)_k)$ denote the element of $\N(i)$ given by the $i$th line of $A$, i.e.,  \[  n(i)_k=a_{i,k}\ \text{ for all }\ i,k\in\N.\] Then set
\[E(A)=\bigsqcup_{i\in\N}\{\bar n(i)\}.\]
So, $E(A)\subseteq T$.

By simply unfolding definitions, we have that  $\Lambda(E(A))$ is isomorphic to $\lambda_0(A)$ for all $A\in \mathrm{K}^*$. Therefore, we are left to notice that the assignment $A\mapsto \Lambda(E(A))$ can be done in a Borel manner.   As $T$ is countable, $\Lambda(T)$ is separable. Therefore, we can fix a Fréchet space $X\in \SF$ isomorphic to $\Lambda(T)$. Let $F\colon \Lambda(T)\to X$ be such isomorphism. Since for each $E\subseteq T$,   $\Lambda(E)$ is a Fréchet subspace of $\Lambda(T)$, we can   define a map $\varphi\colon \mathrm{K}^*\to \SF$ by letting 
\[\varphi(A)=F[\Lambda(E(A))]\]
for all $A\in \mathrm K^*$. This construction immediately gives that   $\varphi(A)$ is isomorphic to $\Lambda(E(A))$ for all $A\in \mathrm{K}^*$, so, we only need to show that $\varphi$ is Borel. 
 
 Let $U\subseteq C(\R)$ be an open set and let us show that 
$\{A\in \mathrm{K}^*\mid \varphi(A)\cap U\neq \emptyset\}$
is Borel. Let 
\[c_{00}(T,\Q)=c_{00}(T)\cap \Q^T.\]
Notice that $\varphi(A)\cap U\neq \emptyset$ if and only if there is $(x_{\bar n})_{\bar n\in T}$ in $c_{00}(T,\Q)$  whose image under $F$ is in  $ U$ and such that  $x_{\bar n}= 0$ for all $\bar n \in  T\setminus E(A)$.   Therefore, letting   
\[I=\left\{(x_{\bar n})_{\bar n\in T}\in c_{00}(T,\Q)\mid F\left((x_{\bar n})_{\bar n\in T}\right)\in U\right\},\]
we have that 
\[\{A\in \mathrm{K}^*\mid \varphi(A)\cap U\neq \emptyset\}=\bigcup_{(x_{\bar n})_{\bar n\in T}\in I}\bigcap_{\substack{\bar n\in T, \\
x_{\bar n}\neq 0}} \left\{A\in \mathrm{K}^*\mid \bar n\in E(A)\right\}. \]
As all sets in the right-hand side above are Borel and the indices in the union and intersection are countable, this shows that  the set of the left-had side above is also Borel as desired.
\end{proof}

\begin{proof}
    [Proof of Theorem \ref{ThmMonCompleteCoanalytic}]
    By Proposition \ref{montel}, $\mathrm{Mon}$ is coanalytic. So, we are left to notice that   any coanalytic set Borel reduces to $\mathrm{Mon}$. By Lemma \ref{LemmaFunctionReducingMontel}, there is a  Borel map $\varphi \colon \mathrm K^*\to \SF$ such that $\varphi(A)$ is isomorphic to $\lambda_0(A)$ for all $A\in \mathrm K^*$.  Hence, since the equivalence between \eqref{ThmDieudonneGomesItem1} and \eqref{ThmDieudonneGomesItem2} of Theorem \ref{ThmDieudonneGomes} says that $\lambda_0(A)$ is Montel if and only if $A\in \rm MK$, this function is a Borel reduction of $\rm MK^*$ to $\mathrm{Mon}$. As $\rm MK^*$  is complete coanalytic (Corollary \ref{CorMK*CompleteCoan}), the result follows.
\end{proof}

\begin{theorem}\label{ThmContainsInfDimComAna}
    The subset 
    \[\{X\in \SF\mid X\text{ contains an infinite dimensional Banach space}\}\]
    is complete analytic.
\end{theorem}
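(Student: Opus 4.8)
The plan is to establish the two halves of completeness separately: first that the set --- call it $\mathcal I=\{X\in\SF\mid X\text{ contains an infinite dimensional Banach space}\}$ --- is analytic by a routine quantifier count, and then that it is $\Sigma^1_1$-hard by recycling the Borel map $\varphi$ from Lemma \ref{LemmaFunctionReducingMontel} together with the Dieudonné--Gomes dichotomy. The hardness direction is the conceptual point but turns out to be essentially free once the Montel machinery is in place; the genuine bookkeeping is in the analytic upper bound.

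For analyticity I would characterize membership by the existence of a witnessing sequence. Recall that a closed subspace $Y\subseteq X$ is normable exactly when some pseudonorm dominates all the others on it (this is the content of Theorem \ref{ThmFréchetIsBanachIFFBoundedN}, reformulated as in Proposition \ref{SBborelSF}). I claim
\[X\in\mathcal I\iff \exists (x_j)_j\in C(\R)^\N\ \exists n\in\N:\ \big[\forall j\,(x_j\in X)\big]\wedge(\dagger)\wedge(\star),\]
where $(\dagger)$ asserts that each $x_{j+1}$ stays a positive $\|\cdot\|_n$-distance from the span of its predecessors,
\[(\dagger):\qquad \forall j\in\N\ \exists\delta\in\Q^+\ \forall(a_1,\dots,a_j)\in\Q^j:\ \Big\|x_{j+1}-\sum_{i=1}^j a_ix_i\Big\|_n\ge\delta,\]
and $(\star)$ asserts that $\|\cdot\|_n$ dominates every $\|\cdot\|_m$ on the rational span,
\[(\star):\qquad \forall m\in\N\ \exists K\in\Q^+\ \forall \ell\in\N\ \forall (a_i)_{i\le\ell}\in\Q^\ell:\ \Big\|\sum_{i=1}^\ell a_ix_i\Big\|_m\le K\Big\|\sum_{i=1}^\ell a_ix_i\Big\|_n.\]
If such data exist, then by density of the rationals and continuity of the pseudonorms $(\star)$ forces $\|\cdot\|_n$ to dominate every $\|\cdot\|_m$ on $Y:=\overline{\mathrm{span}}\{x_j\}$, so $\|\cdot\|_n$ is a norm generating the topology of the closed (hence complete) subspace $Y$, which is a Banach space; condition $(\dagger)$ makes the $x_j$ linearly independent, so $Y$ is infinite dimensional. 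Conversely, any infinite dimensional Banach subspace of $X$ furnishes such a sequence. Pointwise membership $x\in X$ is Borel in $C(\R)\times\cF(C(\R))$, and $(\dagger)$, $(\star)$ are countable combinations of closed conditions; since the outer quantifier ranges over the Polish space $C(\R)^\N\times\N$, the set $\mathcal I$ is analytic.

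For hardness I would exploit that for the Köthe spaces the Montel property is precisely the \emph{absence} of the phenomenon being detected. By the equivalence \eqref{ThmDieudonneGomesItem1}$\iff$\eqref{ThmDieudonneGomesItem3} of Theorem \ref{ThmDieudonneGomes}, for every $A\in\mathrm K^*$ the space $\lambda_0(A)$ is Montel if and only if no infinite dimensional subspace of it is normable; equivalently, $\lambda_0(A)$ fails to be Montel exactly when it contains an infinite dimensional Banach space. Let $\varphi\colon\mathrm K^*\to\SF$ be the Borel map of Lemma \ref{LemmaFunctionReducingMontel}, so that $\varphi(A)\cong\lambda_0(A)$, and use the equivalence \eqref{ThmDieudonneGomesItem1}$\iff$\eqref{ThmDieudonneGomesItem2} to identify the Montel matrices with $\mathrm{MK}^*$. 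Then
\[\varphi(A)\in\mathcal I\iff \lambda_0(A)\text{ is not Montel}\iff A\in\mathrm K^*\setminus\mathrm{MK}^*,\]
so $\varphi$ is a Borel reduction of $\mathrm K^*\setminus\mathrm{MK}^*$ to $\mathcal I$. Since $\mathrm{MK}^*$ is complete coanalytic (Corollary \ref{CorMK*CompleteCoan}), its complement $\mathrm K^*\setminus\mathrm{MK}^*$ is complete analytic --- a $\Pi^1_1$-complete set has $\Sigma^1_1$-complete complement --- and hence $\mathcal I$ is $\Sigma^1_1$-hard. Together with the first part this gives that $\mathcal I$ is complete analytic.

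I expect the main obstacle to be purely in the analytic upper bound, specifically in capturing infinite dimensionality by a manifestly Borel condition; the role of $(\dagger)$ is exactly to convert linear independence into a countable conjunction of closed constraints, sidestepping the uncountable real-coefficient quantifier that a naive formulation would introduce. Once $(\dagger)$ and the normability-via-pseudonorm-domination reformulation $(\star)$ are in hand, the quantifier count is routine, and the hardness direction follows with essentially no new work from Theorem \ref{ThmDieudonneGomes} and the already constructed reduction $\varphi$.
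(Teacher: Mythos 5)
Your proof is correct. The hardness half is exactly the paper's argument: the same Borel map $\varphi$ from Lemma \ref{LemmaFunctionReducingMontel} together with the equivalence \eqref{ThmDieudonneGomesItem1}$\Leftrightarrow$\eqref{ThmDieudonneGomesItem3} of Theorem \ref{ThmDieudonneGomes} turns the complement of the complete coanalytic set $\mathrm{MK}^*$ into a reduction to the class in question, so there is nothing to compare there. Where you diverge is the analytic upper bound. The paper disposes of it in one line by writing $X\in A\Leftrightarrow\exists Y\in\SB_\infty\ (Y\hookrightarrow X)$ and invoking the Borelness of $\SB_\infty$ together with the (implicit, Lemma \ref{iso}-style) analyticity of the embeddability relation between $\SB$ and $\SF$; your version instead exhibits an explicit witness --- a sequence $(x_j)_j$ in $X$ plus an index $n$ satisfying the separation condition $(\dagger)$ and the domination condition $(\star)$ --- and checks directly that such data exist if and only if $X$ contains an infinite dimensional Banach subspace (the forward direction via completeness of the closed span under the single dominating pseudonorm, the converse via Proposition \ref{PropBanachSpaceEmbFréchet}). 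Your route is longer but more self-contained: it avoids quantifying over a second coding space and avoids having to know that embeddability is analytic, at the cost of the pseudonorm bookkeeping in $(\dagger)$ and $(\star)$. Both are sound; the only point worth double-checking in yours is that $(\star)$ also forces $\|\cdot\|_n$ to be an actual norm (not merely a dominating pseudonorm) on the closed span, which does follow since the full family of pseudonorms separates points, but deserves the one-sentence remark you essentially already make.
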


\begin{proof}
If $A$ denotes the set in the statement of the theorem and $\SB_\infty$ is the subset of $\SB$ of infinite dimensional spaces, then 
\[X\in A\ \Leftrightarrow \ \exists Y\in \SB_\infty\ \text{  such that }  \ Y\hookrightarrow X.\]
As $\SB_\infty $ is Borel, it follows that $A$ is analytic. The fact that it is complete analytic then follows as the proof of Theorem \ref{ThmMonCompleteCoanalytic} with the only difference being that instead of using the equivalence of \eqref{ThmDieudonneGomesItem1} and \eqref{ThmDieudonneGomesItem2} of Theorem \ref{ThmDieudonneGomes}, we replace this with the equivalence between \eqref{ThmDieudonneGomesItem1} and \eqref{ThmDieudonneGomesItem3} of Theorem \ref{ThmDieudonneGomes}.
\end{proof}

\begin{cor}\label{CorNoMontelUniversal}
    There is no  Montel space which is isomorphically universal for all separable Montel spaces.
\end{cor}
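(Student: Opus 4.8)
The plan is to argue by contradiction, exploiting the standard principle that a universal object for a class converts membership in that class into an embeddability statement, which is automatically analytic. So I would suppose toward a contradiction that $U$ is a separable Montel space which is isomorphically universal for the class of separable Montel spaces. Since every separable Fréchet space embeds isomorphically into $C(\R)$, I may fix a copy $U'\in\SF$ with $U'\cong U$; as being Montel is isomorphism-invariant, $U'$ is itself Montel.

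The key observation I would use is that the Montel property passes to closed subspaces: if $Y$ is a closed subspace of a Montel space and $B\subseteq Y$ is closed and bounded in $Y$, then $B$ is closed and bounded in the ambient Montel space (a closed subset of a closed subspace is closed, and boundedness is inherited by the subspace topology), hence compact. Combining this with universality yields, for every $X\in\SF$,
\[X\in\mathrm{Mon}\iff X\hookrightarrow U',\]
where $\hookrightarrow$ denotes isomorphic embedding as a closed subspace: the forward implication is universality (together with $U\cong U'$), and the backward implication is precisely the remark that closed subspaces of the Montel space $U'$ are Montel.

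It then remains to check that the right-hand side describes an analytic subset of $\SF$. Writing it out,
\[\{X\in\SF\mid X\hookrightarrow U'\}=\{X\in\SF\mid \exists Y\in\SF,\ Y\subseteq U'\ \text{and}\ Y\cong X\},\]
this set is the projection onto the first coordinate of the intersection of the analytic set $\{(X,Y)\in\SF^2\mid X\cong Y\}$ (Lemma \ref{iso}) with the Borel set $\SF\times\{Y\in\SF\mid Y\subseteq U'\}$, the latter being Borel because $\{F\in\cF(C(\R))\mid F\subseteq U'\}$ is Borel (see \cite[Exercise 12.11.ii]{KechrisBook1995}). As projections of analytic sets are analytic, $\mathrm{Mon}$ would then be analytic, contradicting Theorem \ref{ThmMonCompleteCoanalytic}.

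I expect no serious obstacle here, since the conclusion is essentially forced once Theorem \ref{ThmMonCompleteCoanalytic} is available; the only point requiring a moment of care is verifying that the Montel property is inherited by closed subspaces, as this is what makes the equivalence $X\in\mathrm{Mon}\iff X\hookrightarrow U'$ hold in both directions.
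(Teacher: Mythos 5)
Your proposal is correct and follows essentially the same route as the paper: assume a universal Montel space exists, use that the Montel property passes to closed subspaces to identify $\mathrm{Mon}$ with the isomorphic saturation of the (Borel) set of closed subspaces of the universal space, conclude via Lemma \ref{iso} that $\mathrm{Mon}$ would be analytic, and contradict Theorem \ref{ThmMonCompleteCoanalytic}. The only difference is that you spell out the hereditariness of the Montel property and the analyticity computation in more detail than the paper does.
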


\begin{proof}
    Suppose towards a contradiction that there is $X\in \mathrm{Mon}$ such that every $Y\in \mathrm{Mon}$ isomorphically embeds into $X$. Then, since closed subspaces of Montel spaces are Montel, this implies that 
    \[\mathrm{Mon}=\langle \SF(X)\rangle=\{Y\in \SF\mid \exists Z\in \SF(X),\ Z\cong Y\}.\]
    Since $\SF(X)$ is Borel, $\langle \SF(X)\rangle$ is analytic by Lemma \ref{iso}. This contradicts Theorem \ref{ThmMonCompleteCoanalytic}.
\end{proof}

\begin{cor}\label{CorNoTrulyFrechetUniversal}
 If a separable Fréchet space contains isomorphic copies of all separable Montel spaces, then it must also contain an isomorphic copy of an infinite dimensional Banach space.
\end{cor}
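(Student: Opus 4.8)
The plan is to argue by contradiction, recycling the reduction $\varphi$ from Lemma \ref{LemmaFunctionReducingMontel} together with the dichotomy encoded in the Dieudonné--Gomes theorem. Suppose that $X$ is a separable Fréchet space---which we may assume lies in $\SF$, as $C(\R)$ is isomorphically universal---that contains isomorphic copies of all separable Montel spaces but contains \emph{no} infinite dimensional Banach space. Exactly as in the proof of Corollary \ref{CorNoMontelUniversal}, I would consider the set
\[
A_{\mathrm{emb}}=\langle \SF(X)\rangle=\{Y\in \SF\mid Y\hookrightarrow X\},
\]
which is analytic by Lemma \ref{iso} since $\SF(X)$ is Borel. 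By hypothesis $\mathrm{Mon}\subseteq A_{\mathrm{emb}}$.

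Next I would pull this set back along $\varphi$ and identify its preimage using Theorem \ref{ThmDieudonneGomes}. Fix $A\in \mathrm K^*$, so that $\varphi(A)\cong \lambda_0(A)$. If $A\in \mathrm{MK}^*$, then the equivalence of \eqref{ThmDieudonneGomesItem1} and \eqref{ThmDieudonneGomesItem2} gives that $\lambda_0(A)$ is Montel, whence $\varphi(A)\in \mathrm{Mon}\subseteq A_{\mathrm{emb}}$. If instead $A\in \mathrm K^*\setminus \mathrm{MK}^*$, then $\lambda_0(A)$ is not Montel, so the equivalence of \eqref{ThmDieudonneGomesItem1} and \eqref{ThmDieudonneGomesItem3} produces an infinite dimensional normable---hence Banach---subspace of $\lambda_0(A)\cong \varphi(A)$. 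Were $\varphi(A)$ to embed into $X$, this Banach subspace would embed into $X$ as well, contradicting our standing assumption on $X$; therefore $\varphi(A)\notin A_{\mathrm{emb}}$. Combining the two cases yields
\[
\varphi^{-1}(A_{\mathrm{emb}})=\mathrm{MK}^*.
\]

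Since $\varphi$ is Borel and $A_{\mathrm{emb}}$ is analytic, the displayed identity exhibits $\mathrm{MK}^*$ as an analytic set, contradicting Corollary \ref{CorMK*CompleteCoan}, by which $\mathrm{MK}^*$ is complete coanalytic and hence not analytic. This contradiction forces $X$ to contain an infinite dimensional Banach space, proving the corollary. The only genuinely delicate point is the dichotomy of the middle paragraph: the hard part is to recognize that the \emph{same} reduction $\varphi$ simultaneously sorts Köthe matrices into ``Montel'' and ``containing an infinite dimensional Banach space'' according to membership in $\mathrm{MK}^*$---which is precisely what the equivalences \eqref{ThmDieudonneGomesItem1}--\eqref{ThmDieudonneGomesItem3} of the Dieudonné--Gomes theorem supply---so that the hypothesis ``$X$ contains no infinite dimensional Banach space'' cleanly separates the two cases. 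The analyticity of $A_{\mathrm{emb}}$ and the Borelness of $\varphi$ are then routine and already available from the earlier machinery.
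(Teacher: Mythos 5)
Your proof is correct and follows essentially the same route as the paper's: both use the Borel reduction $\varphi$ from Lemma \ref{LemmaFunctionReducingMontel}, the analyticity of the embeddability class via Lemma \ref{iso}, the complete coanalyticity of $\mathrm{MK}^*$, and the Dieudonn\'e--Gomes equivalences. The only difference is cosmetic: you argue by contradiction and establish the full identity $\varphi^{-1}(A_{\mathrm{emb}})=\mathrm{MK}^*$, whereas the paper argues directly that the analytic preimage $\varphi^{-1}(\langle\SF(X)\rangle)$ must meet $\mathrm K^*\setminus\mathrm{MK}^*$ and extracts the Banach subspace from a witness there.
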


\begin{proof}
Suppose $X\in \SF$ contains isomorphic copies of all separable Montel spaces and let $\varphi\colon \rm K^*\to \SF$ be the Borel map given by Lemma \ref{LemmaFunctionReducingMontel}. Then \[\varphi(\rm MK^*)\subseteq \langle \SB(X)\rangle \subseteq \SF.\]
As $\langle \SB(X)\rangle $ is analytic (Lemma \ref{iso}), and $\rm MK^*$ is not (Corollary \ref{CorMK*CompleteCoan}), the preimage of $\langle \SB(X)\rangle $ under $\varphi$ must intersect $\rm K^*\setminus \rm MK^*$. Picking $A$ in this intersection, we have that $\varphi(A)$ is isomorphic to a subspace of $X$ and $A$ is not in $\rm MK^*$. By Theorem \ref{ThmDieudonneGomes}, $\varphi(A)$ must contain an isomorphic copy of some infinite dimensional Banach space and hence so does $X$.
\end{proof}

\subsection{Revisiting $\mathrm{REFL}_{\neg \rm B}$ and $\SD_{\neg \rm B}$}\label{SubsectionSDREFL} 
  We have seen in Corollary \ref{CorollarySDandReflNotBorel} that  neither $\mathrm{REFL}_{\neg\rm B}$ nor $\SD_{\neg \rm B}$ are Borel. Now, with the methods of Subsection \ref{SubsectionMontel}, we can improve this result and show that both these sets are in fact \emph{$\Pi^1_1$-hard}, i.e., every coanalytic set Borel reduces to it. For  $\mathrm{REFL}_{\neg\rm B}$, the proof is essentially already in Subsection \ref{SubsectionMontel}. The result for $\SD_{\neg \rm B}$  will require a mild adaptation.  Indeed, for  $\mathrm{REFL}_{\neg\rm B}$, the Borel map constructed in Lemma \ref{LemmaFunctionReducingMontel} also gives us this result as consequence. Precisely:

\begin{theorem}\label{ThmREFLnotBanachPI11Hard}
The subset   $\mathrm{REFL}_{\neg \rm B}$ is $\Pi^1_1$-hard.
\end{theorem}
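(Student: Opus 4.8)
The plan is to reuse essentially verbatim the reduction constructed in Lemma \ref{LemmaFunctionReducingMontel} together with the Dieudonn\'e--Gomes characterization. Recall that Theorem \ref{ThmDieudonneGomes} lists four equivalent conditions on $\lambda_0(A)$ for a K\"othe matrix $A$: being Montel (item \eqref{ThmDieudonneGomesItem1}), the arithmetic condition on the matrix that defines $\mathrm{MK}$ (item \eqref{ThmDieudonneGomesItem2}), having no infinite dimensional normable subspace (item \eqref{ThmDieudonneGomesItem3}), and being reflexive (item \eqref{ThmDieudonneGomesItem4}). The crucial observation is that the very same equivalence between \eqref{ThmDieudonneGomesItem1} and \eqref{ThmDieudonneGomesItem4} tells us that $\lambda_0(A)$ is reflexive if and only if $A\in \mathrm{MK}$, exactly the same equation that governed the Montel case.

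First I would recall the Borel map $\varphi\colon \mathrm K^*\to \SF$ from Lemma \ref{LemmaFunctionReducingMontel} satisfying $\varphi(A)\cong \lambda_0(A)$ for all $A\in \mathrm K^*$. Next I would argue that $\varphi$ in fact lands inside $\SF_{\neg \rm B}$ on all of $\mathrm K^*$, not just on $\mathrm{MK}^*$: a space of the form $\lambda_0(A)$ is never normable, since it is by construction an intersection of the Banach spaces $\lambda(A,k)$ with a genuinely non-trivial (non-stabilizing) sequence of norms, so that it carries no bounded neighborhood of zero and hence, by Theorem \ref{ThmFr\'echetIsBanachIFFBoundedN}, cannot be Banach. (If one wishes to be fully careful here, one restricts attention to those $A$ for which the matrix genuinely grows; the matrices arising as $f((\bar\lambda(i))_i)$ in the proof of Proposition \ref{PropMKComCoanalitic} are all strictly increasing in $k$ on each line, so the reduction already produces non-normable spaces.) Consequently $\varphi(A)\in \SF_{\neg \rm B}$ for every $A\in \mathrm K^*$ in the range of the reduction.

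With this in hand, the equivalence of \eqref{ThmDieudonneGomesItem1} and \eqref{ThmDieudonneGomesItem4} in Theorem \ref{ThmDieudonneGomes} gives, for all $A\in \mathrm K^*$,
\[
\varphi(A)\in \mathrm{REFL}_{\neg \rm B}\ \Longleftrightarrow\ \lambda_0(A)\text{ is reflexive}\ \Longleftrightarrow\ A\in \mathrm{MK},\ \text{i.e. } A\in \mathrm{MK}^*.
\]
Thus $\varphi$ is a Borel reduction of $\mathrm{MK}^*$ to $\mathrm{REFL}_{\neg \rm B}$, that is, $\varphi^{-1}(\mathrm{REFL}_{\neg \rm B})=\mathrm{MK}^*$. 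Since $\mathrm{MK}^*$ is complete coanalytic by Corollary \ref{CorMK*CompleteCoan}, and the composition of two Borel reductions is a Borel reduction, every coanalytic set Borel reduces to $\mathrm{REFL}_{\neg \rm B}$. This is precisely the assertion that $\mathrm{REFL}_{\neg \rm B}$ is $\Pi^1_1$-hard.

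The main subtlety, and the only place where one must be slightly careful rather than purely formal, is the verification that $\varphi(A)$ really lies in $\SF_{\neg \rm B}$ (equivalently, that $\lambda_0(A)$ is genuinely non-normable) for the relevant matrices $A$; without this the reduction would only witness $\Pi^1_1$-hardness of $\mathrm{REFL}$ inside all of $\SF$ rather than inside $\SF_{\neg \rm B}$. Everything else is an immediate transfer of the Montel argument via the reflexivity clause of Dieudonn\'e--Gomes, so I do not expect any genuine obstacle beyond bookkeeping.
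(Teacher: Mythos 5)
Your proposal takes essentially the same route as the paper: the paper's entire proof is the one-line remark that the reduction $\varphi$ of Lemma \ref{LemmaFunctionReducingMontel} works verbatim once the equivalence \eqref{ThmDieudonneGomesItem1}$\Leftrightarrow$\eqref{ThmDieudonneGomesItem2} of Theorem \ref{ThmDieudonneGomes} is replaced by \eqref{ThmDieudonneGomesItem1}$\Leftrightarrow$\eqref{ThmDieudonneGomesItem4}, and that is exactly what you do.

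One correction, though, on the step you single out as ``the main subtlety.'' Your justification that $\varphi(A)\in \SF_{\neg \rm B}$ for \emph{all} $A\in\mathrm K^*$ rests on a false claim: $\lambda_0(A)$ can perfectly well be normable --- for the constant matrix $a_{i,k}=1$ (which lies in $\mathrm K^*$) one gets $\lambda_0(A)\cong c_0$ --- and the parenthetical repair does not help, since the matrices in $\mathrm K^*$ have rows that \emph{stabilize} at position $i$ by definition, so ``strictly increasing in $k$ on each line'' is not available after the passage from $\mathrm{MK}$ to $\mathrm{MK}^*$ in Corollary \ref{CorMK*CompleteCoan}. Fortunately the step is not actually needed: if $A\notin\mathrm{MK}^*$ then $\lambda_0(A)$ is not reflexive by \eqref{ThmDieudonneGomesItem1}$\Leftrightarrow$\eqref{ThmDieudonneGomesItem4}, so $\varphi(A)\notin\mathrm{REFL}_{\neg \rm B}$ regardless of normability; and if $A\in\mathrm{MK}^*$ then $\lambda_0(A)$ is an infinite dimensional Montel space, hence reflexive and non-normable (by \eqref{ThmDieudonneGomesItem1}$\Leftrightarrow$\eqref{ThmDieudonneGomesItem3}, or simply because infinite dimensional Banach spaces are never Montel). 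Thus $\varphi^{-1}(\mathrm{REFL}_{\neg \rm B})=\mathrm{MK}^*$ holds anyway, and if you insist on a reduction with values in the standard Borel space $\SF_{\neg \rm B}$ rather than in $\SF$, it suffices to redefine $\varphi$ on the Borel set $\varphi^{-1}(\SB_{\rm F})$ to be a fixed non-reflexive element of $\SF_{\neg \rm B}$, which does not change the preimage of $\mathrm{REFL}_{\neg \rm B}$.
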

\begin{proof}
      Once again,  this proof is  the same as the one of Theorem \ref{ThmMonCompleteCoanalytic}  with the only difference now being that instead of using the equivalence   \eqref{ThmDieudonneGomesItem1}$\Leftrightarrow$\eqref{ThmDieudonneGomesItem2} of Theorem \ref{ThmDieudonneGomes}, we use its  equivalence \eqref{ThmDieudonneGomesItem1}$\Leftrightarrow$\eqref{ThmDieudonneGomesItem4}.
\end{proof}

We shall now outline the modifications needed to show that   $\SD_{\neg \rm B}$ is $\Pi^1_1$-hard. For that, let us first explain why some modification is needed. Let  $\varphi \colon \rm K^*\to \SF$ be the map defined in  Lemma \ref{LemmaFunctionReducingMontel}. So, $\varphi$ is a Borel reduction of $\rm MK^*$ to $\rm Mon$ such that $\varphi(A)$ is isomorphic to $\lambda_0(A)$ for all $A\in \rm K^*$.  By Theorem \ref{ThmDieudonneGomes},  $\lambda_0(A)$ contains an infinite dimensional Banach space if $A\not\in \rm MK^*$. The reason for that is actually quite simple: if $A=[a_{i,j}]\in \rm K$ is so  that there is $I\subseteq \N$ and $j\in\N$ such that for all $k\geq j$ we have $\inf_{i\in I}a_{i,j}a^{-1}_{i,k}>0$, then 
the topology of  
\[Y=\left\{(x_i)_i\in \lambda_0(A)\mid\forall i\in \N\setminus I,\  x_i=0\right\}\]
is completely determined by the norm $\|\cdot\|_j$. Therefore, the map 
\[(x_i)_i\in Y\mapsto (x_ia_{i,j})_i\in c_0\]
is an isomorphism and we conclude not only that $\lambda_0(A)$  contains \emph{some} infinite dimensional Banach space, but actually that $\lambda_0(A)$ contains an isomorphic copy of $c_0$. As a subproduct of this discussion, we isolate a strengthening of Corollary \ref{CorNoTrulyFrechetUniversal}.

\begin{cor}\label{CorNoTrulyFrechetUniversal.V2}
 If a separable Fréchet space contains isomorphic copies of all separable Montel spaces, then it must also contain an isomorphic copy of $c_0$.\qed
\end{cor}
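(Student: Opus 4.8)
The plan is to upgrade the conclusion of Corollary \ref{CorNoTrulyFrechetUniversal} from ``contains \emph{some} infinite dimensional Banach space'' to ``contains an isomorphic copy of $c_0$,'' exploiting the explicit discussion that precedes this statement. The key observation already recorded in the paragraph above is that whenever $A=[a_{i,k}]\in \mathrm{K}$ fails to be a Montel--K\"{o}the matrix, the witnessing data (an infinite $I\subseteq \N$ and $j\in\N$ with $\inf_{i\in I}a_{i,j}a_{i,k}^{-1}>0$ for all $k\geq j$) produces inside $\lambda_0(A)$ not merely an arbitrary infinite dimensional Banach subspace but specifically an isomorphic copy of $c_0$, via the map $(x_i)_i\mapsto (x_ia_{i,j})_i$ on the subspace supported on $I$.

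First I would run exactly the argument of Corollary \ref{CorNoTrulyFrechetUniversal}. Assume $X\in \SF$ contains isomorphic copies of all separable Montel spaces and let $\varphi\colon \mathrm K^*\to \SF$ be the Borel map from Lemma \ref{LemmaFunctionReducingMontel}, so $\varphi(A)\cong \lambda_0(A)$ for all $A\in \mathrm K^*$. Since every $A\in \mathrm{MK}^*$ yields a Montel space $\lambda_0(A)$ (by the equivalence \eqref{ThmDieudonneGomesItem1}$\Leftrightarrow$\eqref{ThmDieudonneGomesItem2} of Theorem \ref{ThmDieudonneGomes}), we have $\varphi(\mathrm{MK}^*)\subseteq \langle\SB(X)\rangle$. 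Because $\langle\SB(X)\rangle$ is analytic (Lemma \ref{iso}) while $\mathrm{MK}^*$ is complete coanalytic and hence non-analytic (Corollary \ref{CorMK*CompleteCoan}), the preimage $\varphi^{-1}(\langle\SB(X)\rangle)$ is a coanalytic set that cannot equal $\mathrm{MK}^*$; it strictly contains it, so there exists $A\in \mathrm K^*\setminus \mathrm{MK}^*$ with $\varphi(A)$ isomorphic to a subspace of $X$.

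Next I would invoke the refined structural fact rather than the crude one. Since $A\notin \mathrm{MK}^*$, there are an infinite $I\subseteq \N$ and $j\in\N$ with $\inf_{i\in I}a_{i,j}a_{i,k}^{-1}>0$ for every $k\in\N$ (note that for matrices in $\mathrm K^*$ one has $a_{i,k}=a_{i,i}$ for $k\geq i$, which is precisely what makes the topology on the $I$-supported subspace stabilize to a single norm). As explained in the paragraph preceding the statement, the $I$-supported subspace $Y\subseteq \lambda_0(A)$ carries a Fréchet topology determined by $\|\cdot\|_j$ alone, and the coordinate rescaling $(x_i)_i\mapsto (x_ia_{i,j})_i$ is an isomorphism of $Y$ onto $c_0$. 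Thus $\lambda_0(A)$, and hence $\varphi(A)$, contains an isomorphic copy of $c_0$; since $\varphi(A)$ embeds into $X$ and $c_0$-subspaces are inherited by superspaces, $X$ contains an isomorphic copy of $c_0$.

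I do not expect any genuine obstacle here, as the two substantive inputs---the non-analyticity of $\mathrm{MK}^*$ forcing a non-Montel matrix whose $\lambda_0$ still embeds, and the explicit $c_0$ inside $\lambda_0(A)$ for non-Montel $A$---are both already established in the excerpt. The only point requiring mild care is verifying that the coordinate map lands in $c_0$ and not merely in some $\ell_\infty$-type space: one must use that elements of $\lambda_0(A)$ are built as $\|\cdot\|_k$-limits of finitely supported sequences, so their rescalings $(x_ia_{i,j})_{i\in I}$ are genuinely $c_0$ (null) sequences rather than bounded ones. This is immediate from the definition of $\lambda_0(A)$ as the intersection of the completions $\lambda(A,k)$, each of which is a $c_0$-type space, and it is exactly the mechanism that is implicit in calling $\lambda_0(A)$ a ``Fréchet $c_0$ sequence space.''
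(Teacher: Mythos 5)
Your argument is correct and is essentially the paper's own proof: the corollary is stated without a separate proof precisely because it follows from the argument of Corollary \ref{CorNoTrulyFrechetUniversal} combined with the observation, made in the paragraph preceding the statement, that for $A\notin\mathrm{MK}$ the $I$-supported subspace of $\lambda_0(A)$ is isomorphic to $c_0$ via $(x_i)_i\mapsto(x_ia_{i,j})_i$. The one slip is calling $\varphi^{-1}(\langle\SB(X)\rangle)$ \emph{coanalytic}; it is \emph{analytic} (a Borel preimage of an analytic set), and that is exactly what forces it to differ from the non-analytic set $\mathrm{MK}^*$ and hence to meet $\mathrm K^*\setminus\mathrm{MK}^*$.
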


As some readers may have already noticed, the issue to proceed exactly as in Theorem
\ref{ThmREFLnotBanachPI11Hard} to show that $\SD_{\neg \rm B}$ is $\Pi^1_1$-hard is simply  that $c_0$ has a separable dual. However, this is easy to go around. The Fréchet space $\lambda_0(A)$ is a Fréchet sequence space constructed to be a not normable version of  $c_0$. There is a straightforward modification of its construction which gives us ``Fréchet $\ell_p$-sequence spaces''. For $p=1$, this construction will give us what we need. 

Let us quickly define the $\lambda_p(A)$ spaces. Let $p\in [1,\infty)$  and  $A=[a_{i,k}]\in \rm K$. Then $\lambda_p(A)$ is the vector subspace of $\R^\N$ consisting of all $(x_i)_i\in \R^\N$ such that \[\|(x_i)_i\|_k=\left(\sum_{i\in\N} |x_ia_{i,k}|^p\right)^{1/p}<\infty\ \text{ for all }\ k\in\N.\]
We endow $\lambda_p(A)$ with the Fréchet structure given by all norms $(\|\cdot\|_k)_k$.  

Proceeding exactly as in Subsection \ref{SubsectionMontel} with the space $\lambda_p(A)$ instead of $\lambda_0(A)$, we obtain the following version of Lemma \ref{LemmaFunctionReducingMontel}:

\begin{lemma}\label{LemmaFunctionReducingMontel.V2}
Let $p\in [1,\infty)$. There is a Borel map $\varphi\colon \rm K^*\to \SF$ such that $\varphi(A)$ is isomorphic to $\lambda_p(A)$ for all $A\in \rm K^*$.    
\end{lemma}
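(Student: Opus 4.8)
The plan is to mirror the construction of Lemma \ref{LemmaFunctionReducingMontel} verbatim, replacing the supremum-type norms defining $\lambda_0(A)$ with the $\ell_p$-type norms defining $\lambda_p(A)$, and then to check that the same Borel-coding argument goes through unchanged. Concretely, I would reuse the countable index set $T=\bigsqcup_{i\in\N}\N(i)$ and the subspaces $c_{00}(E)$ for $E\subseteq T$ exactly as before, but now equip $c_{00}(T)$ with the sequence of norms
\[
\|(x_{\bar n})_{\bar n\in T}\|_k=\left(\sum_{\bar n=(i,(n_\ell)_\ell)\in T}|x_{\bar n}\,n_k|^p\right)^{1/p},
\]
and define $\Lambda_p(E,k)$ as the completion of $(c_{00}(E),\|\cdot\|_k)$, $\Lambda_p(E)=\bigcap_k\Lambda_p(E,k)$ with the product Fréchet structure, so that $\Lambda_p(E)$ is a Fréchet subspace of the separable space $\Lambda_p(T)$.

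Next, for each $A=[a_{i,k}]\in\mathrm K^*$ I would set $E(A)=\bigsqcup_{i\in\N}\{\bar n(i)\}$ where $\bar n(i)$ is the element of $\N(i)$ read off from the $i$th row of $A$, exactly as in Lemma \ref{LemmaFunctionReducingMontel}. Unfolding definitions then shows $\Lambda_p(E(A))$ is isomorphic to $\lambda_p(A)$: the point is that $\mathrm K^*$ was designed so that each row is eventually constant, which makes the identification of the index-set norms with the Köthe-matrix norms literal. Fixing $X\in\SF$ isomorphic to $\Lambda_p(T)$ via an isomorphism $F$, I would define $\varphi(A)=F[\Lambda_p(E(A))]$, so $\varphi(A)\cong\lambda_p(A)$ for all $A$.

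Finally, Borelness of $\varphi$ is verified word-for-word as in Lemma \ref{LemmaFunctionReducingMontel}: for an open $U\subseteq C(\R)$ one writes
\[
\{A\in\mathrm K^*\mid\varphi(A)\cap U\neq\emptyset\}
=\bigcup_{(x_{\bar n})_{\bar n\in T}\in I}\ \bigcap_{\substack{\bar n\in T,\\ x_{\bar n}\neq 0}}\{A\in\mathrm K^*\mid \bar n\in E(A)\},
\]
where $I=\{(x_{\bar n})_{\bar n}\in c_{00}(T,\Q)\mid F((x_{\bar n})_{\bar n})\in U\}$, and both unions and intersections are countable over Borel sets. I do not anticipate any genuine obstacle here, since the entire argument of Lemma \ref{LemmaFunctionReducingMontel} is insensitive to which monotone norm is placed on $c_{00}(T)$; the only thing to double-check is that the rewriting of $\lambda_p(A)$ with finite $\ell_p$-norms still produces a separable Fréchet space (it does, as $T$ is countable so $c_{00}(T,\Q)$ is a countable dense subset in each norm) and that the eventual-constancy of rows in $\mathrm K^*$ still yields the literal isomorphism $\Lambda_p(E(A))\cong\lambda_p(A)$. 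Thus the proof reduces to observing that replacing $\lambda_0$ by $\lambda_p$ leaves every step of Lemma \ref{LemmaFunctionReducingMontel} intact.
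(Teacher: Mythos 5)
Your proposal is correct and is exactly the argument the paper has in mind: the paper gives no separate proof of Lemma \ref{LemmaFunctionReducingMontel.V2}, stating only that one proceeds as in Subsection \ref{SubsectionMontel} with $\lambda_p(A)$ in place of $\lambda_0(A)$, which is precisely the substitution you carry out. Your added checks (separability of $\Lambda_p(T)$ via $c_{00}(T,\Q)$, and that the eventual constancy of rows in $\mathrm K^*$ still yields $\Lambda_p(E(A))\cong\lambda_p(A)$) are the right points to verify and they go through as you say.
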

By the discussion above and since  Theorem \ref{ThmDieudonneGomes} also holds verbatim for $p\in[1,\infty)$ (see \cite[Theorem 27.9 and Proposition and 27.15]{MeiseVogt1997Book}), the map $\varphi$ of Lemma \ref{LemmaFunctionReducingMontel.V2} satisfy:
\begin{itemize}
\item $\lambda_p(A)$ is Montel for all $A\in \rm MK^*$ and 
    \item $\lambda_p(A)$ contains an isomorphic copy of $\ell_p$ for all $A\in \rm K^*\setminus \rm MK^*$. 
\end{itemize} 
Therefore, 
 Corollary \ref{CorNoTrulyFrechetUniversal.V2} is strengthened as follows.

\begin{cor}\label{CorNoTrulyFrechetUniversal.V3}
 If a separable Fréchet space contains isomorphic copies of all separable Montel spaces, then it must also contain   isomorphics copy of $c_0$ and of $\ell_p$ for all $p\in[1,\infty)$.\qed
\end{cor}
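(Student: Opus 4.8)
The plan is to iterate the argument of Corollary \ref{CorNoTrulyFrechetUniversal.V2} once for each value of $p\in[1,\infty)$, using the $p$-dependent reductions of Lemma \ref{LemmaFunctionReducingMontel.V2} in place of the $\lambda_0$-reduction of Lemma \ref{LemmaFunctionReducingMontel}. The key point is that nothing in the earlier argument forced us to extract all the embedded Banach spaces simultaneously; each fixed $p$ can be handled by a single application of the non-analyticity dichotomy. Fix a separable Fréchet space $X$ containing isomorphic copies of all separable Montel spaces. The copy of $c_0$ is already supplied by Corollary \ref{CorNoTrulyFrechetUniversal.V2}, so only the $\ell_p$'s remain.

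Fix $p\in[1,\infty)$ and let $\varphi\colon \mathrm K^*\to \SF$ be the Borel map of Lemma \ref{LemmaFunctionReducingMontel.V2}, so $\varphi(A)\cong \lambda_p(A)$ for all $A\in\mathrm K^*$. Since Theorem \ref{ThmDieudonneGomes} holds verbatim for $\lambda_p$, the space $\lambda_p(A)$ is Montel whenever $A\in\mathrm{MK}^*$; as $X$ contains copies of all separable Montel spaces, this gives $\varphi(\mathrm{MK}^*)\subseteq \langle\SB(X)\rangle$. Now $\langle\SB(X)\rangle$ is analytic by Lemma \ref{iso}, so its preimage $\varphi^{-1}(\langle\SB(X)\rangle)$ is analytic and contains $\mathrm{MK}^*$. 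If this preimage equalled $\mathrm{MK}^*$, then $\mathrm{MK}^*$ would be analytic, contradicting Corollary \ref{CorMK*CompleteCoan}. Hence there exists $A\in\mathrm K^*\setminus\mathrm{MK}^*$ with $\varphi(A)\in\langle\SB(X)\rangle$, i.e.\ $\lambda_p(A)$ isomorphically embeds into $X$. Because $A\notin\mathrm{MK}^*$, the discussion preceding the statement (the failure of condition \eqref{ThmDieudonneGomesItem2}) exhibits an isomorphic copy of $\ell_p$ inside $\lambda_p(A)$, and therefore inside $X$. Running this once for every $p\in[1,\infty)$ yields copies of all the $\ell_p$'s in $X$.

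There is essentially no obstacle beyond bookkeeping: the proof is a verbatim repetition of the non-analyticity argument, the only genuinely new ingredient being the sharpening — already isolated in the bulleted discussion above — that $A\notin\mathrm{MK}^*$ forces a copy of $\ell_p$ (respectively $c_0$) rather than merely \emph{some} infinite-dimensional Banach space. The one subtlety worth flagging is that the reduction $\varphi$ depends on $p$, so one really does repeat the argument once per $p$ rather than extracting every $\ell_p$ from a single witness $A$.
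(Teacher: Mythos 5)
Your proposal is correct and follows essentially the same route as the paper: the paper also obtains Corollary \ref{CorNoTrulyFrechetUniversal.V3} by running, for each fixed $p$, the non-analyticity argument of Corollary \ref{CorNoTrulyFrechetUniversal} with the $\lambda_p$-reduction of Lemma \ref{LemmaFunctionReducingMontel.V2} in place of the $\lambda_0$-reduction, and by using the observation that failure of condition \eqref{ThmDieudonneGomesItem2} produces a sectional subspace of $\lambda_p(A)$ isomorphic to $\ell_p$ (rather than merely some infinite-dimensional Banach space). Your writeup is in fact slightly more explicit than the paper's, which leaves the corollary with a \qed after the preceding bulleted discussion.
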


Moreover, proceeding as in Theorem \ref{ThmREFLnotBanachPI11Hard}, Lemma \ref{LemmaFunctionReducingMontel.V2} for $p=1$ gives us the following:
 
\begin{theorem}\label{ThmSDnotBanachPI11Hard}
The subset   $\mathrm{SD}_{\neg \rm B}$ is $\Pi^1_1$-hard.\qed
\end{theorem}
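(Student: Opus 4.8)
The plan is to verify that the Borel map $\varphi\colon \rm K^*\to \SF$ furnished by Lemma \ref{LemmaFunctionReducingMontel.V2} with $p=1$ is a Borel reduction of $\rm MK^*$ to $\SD_{\neg \rm B}$; since $\rm MK^*$ is complete coanalytic by Corollary \ref{CorMK*CompleteCoan}, transitivity of Borel reducibility then yields that $\SD_{\neg \rm B}$ is $\Pi^1_1$-hard. Because $\varphi(A)\cong \lambda_1(A)$ for every $A\in \rm K^*$, it suffices to check that $\varphi^{-1}(\SD_{\neg \rm B})=\rm MK^*$, i.e.\ that $\lambda_1(A)$ is a non-normable Fréchet space with separable dual exactly when $A\in \rm MK^*$.

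First I would treat the forward inclusion. If $A\in \rm MK^*$, then by Theorem \ref{ThmDieudonneGomes} in its $p=1$ form (valid by the same references) $\lambda_1(A)$ is Montel; being infinite dimensional and Montel, it is not normable, so $\lambda_1(A)\in \SF_{\neg \rm B}$. To place it in $\SD_{\neg \rm B}$ I would invoke the classical structural fact that a separable Fréchet--Montel space has separable strong dual (the strong dual of a reflexive separable Fréchet space is separable, in analogy with the Banach fact that reflexive separable spaces have separable duals). Hence $\lambda_1(A)\in \SD_{\neg \rm B}$.

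For the reverse inclusion, suppose $A\notin \rm MK^*$. By the equivalence \eqref{ThmDieudonneGomesItem1}$\Leftrightarrow$\eqref{ThmDieudonneGomesItem2} of Theorem \ref{ThmDieudonneGomes} together with the explicit construction in the discussion preceding Corollary \ref{CorNoTrulyFrechetUniversal.V2}, the space $\lambda_1(A)$ contains a closed subspace $Y$ isomorphic to $\ell_1$, namely one whose topology is governed by a single norm $\|\cdot\|_j$. The key point, and the reason $p=1$ is chosen, is that containing $\ell_1$ forces a non-separable dual: by the Hahn--Banach extension theorem in the locally convex setting, every continuous functional on $Y$ extends to $\lambda_1(A)$, so the restriction map from the strong dual of $\lambda_1(A)$ onto the strong dual $Y'_\beta$ of $Y$ is a continuous surjection; since $Y'_\beta\cong \ell_1^*=\ell_\infty$ is non-separable and continuous images of separable spaces are separable, the strong dual of $\lambda_1(A)$ cannot be separable. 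Thus $\lambda_1(A)\notin \SD_{\neg \rm B}$ regardless of whether it happens to be normable, since $\SD_{\neg \rm B}\subseteq \SF_{\neg \rm B}$ already excludes the normable case.

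The main obstacle is the reverse direction: one must confirm that non-separability of $\ell_\infty$ actually propagates up to the ambient Fréchet space, which is where the continuity and surjectivity of the restriction map between \emph{strong} duals must be checked (bounded sets of $Y$ are bounded in $\lambda_1(A)$, giving continuity, and Hahn--Banach gives surjectivity). This is precisely the step that failed for $\lambda_0(A)$, whose embedded copies of $c_0$ have the separable dual $\ell_1$; replacing $c_0$ by $\ell_1$ repairs it. The forward direction rests only on the cited fact that separable Fréchet--Montel spaces have separable strong duals, which I would invoke rather than reprove. The remainder of the argument is identical to the proof of Theorem \ref{ThmREFLnotBanachPI11Hard}.
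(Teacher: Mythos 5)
Your proof is correct and is essentially the paper's own argument: the paper proves Theorem \ref{ThmSDnotBanachPI11Hard} by exactly this reduction, namely the map $\varphi$ of Lemma \ref{LemmaFunctionReducingMontel.V2} with $p=1$, sending $\mathrm{MK}^*$ to Montel (hence non-normable, separable-dual) spaces and $\mathrm{K}^*\setminus\mathrm{MK}^*$ to spaces containing $\ell_1$ (hence with non-separable dual). You simply spell out the two facts the paper leaves implicit --- that separable Fréchet--Montel spaces have separable strong duals, and that an isomorphic copy of $\ell_1$ forces a non-separable strong dual via continuity and Hahn--Banach surjectivity of the restriction map --- which is precisely the substitution of $\ell_1$ for $c_0$ that motivates the paper's passage from $\lambda_0(A)$ to $\lambda_1(A)$.
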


\begin{remark}
    Some readers may be wondering why we have not presented Subsection \ref{SubsectionMontel} with $\lambda_p(A)$ right away instead of  only working with $\lambda_0(A)$ therein. This is a fair question, the answer of which may or may not be satisfactory: we believe the construction of the spaces $\lambda_0(A)$ and of the Borel map $\varphi\colon \rm K^*\to \SF$ was already technical enough without having to deal both with $c_0$ and $\ell_p$ at the same time. We prefered to focus on the $c_0$-case first which has the cleanest notation and then explain its trivial modification at this later stage. 
\end{remark} 

\subsection{Fréchet spaces with a basis}\label{subsectionbasis}
One of the most important problems in descriptive set theory of separable Banach spaces is the \emph{basis problem}: is the analytic subset of Banach spaces with Schauder basis, 
\[\SB_\mathrm{b}=\{X\in \SB\mid X\text{ has a Schauder basis}\},\]
not Borel? This short subsection serves as further advertisement for this problem by introducing it to the Fréchet spaces scenario.

 We start recalling a lemma (see \cite[Corollary 2.6.5]{RolewiczSecondEdition1984}).


\begin{lemma}
Let $X$ be a Fréchet space. A sequence $(x_j)_j$ in $X$ is basic if and only if it is linearly independent, and the sequence of operators $(P_j)_{j}$ defined by $P_j(\sum_ia_ix_i)=\sum_{i=1}^ja_ix_i$ (whenever $\sum_ia_ix_i$ converges, where $(a_i)$ is a sequence of real numbers) is equicontinuous.
\end{lemma}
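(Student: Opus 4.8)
The plan is to prove the two implications separately, setting $Y=\overline{\mathrm{span}}\{x_j\mid j\in\N\}$ and fixing an increasing sequence $(\|\cdot\|_n)_n$ of pseudonorms generating the topology of $X$ (and hence of $Y$). Here equicontinuity of $(P_j)_j$ means that for each $n\in\N$ there are $K>0$ and $m\in\N$ with $\|P_j(y)\|_n\leq K\|y\|_m$ for all $j\in\N$ and all $y$ in the span of the $x_j$'s. For the forward implication, assume $(x_j)_j$ is basic, i.e.\ a basis of $Y$. Linear independence is immediate: a nontrivial finite relation $\sum_{i=1}^\ell a_ix_i=0$ would give $0$ two distinct representations, contradicting uniqueness. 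For equicontinuity I would introduce the auxiliary pseudonorms
\[
|y|_n=\sup_{k\in\N}\Big\|\sum_{i=1}^k a_ix_i\Big\|_n\qquad\text{where } y=\sum_i a_ix_i,
\]
which are finite because $P_k(y)\to y$, and which satisfy $\|\cdot\|_n\leq|\cdot|_n$. The key step is to show that $(Y,(|\cdot|_n)_n)$ is again a Fréchet space, i.e.\ complete. Granting this, the identity map $(Y,(|\cdot|_n)_n)\to(Y,(\|\cdot\|_n)_n)$ is a continuous bijection between Fréchet spaces, hence an isomorphism by the open mapping theorem; comparing the two families of pseudonorms (as in Lemma \ref{continuous}) then yields, for each $n$, constants $K,m$ with $\|P_k(y)\|_n\leq|y|_n\leq K\|y\|_m$ for all $k$, which is exactly equicontinuity.

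The hard part is the completeness of $(Y,(|\cdot|_n)_n)$. Given a $|\cdot|$-Cauchy sequence $(y^{(r)})_r$, the inequality $\|P_k(y^{(r)}-y^{(s)})\|_n\leq |y^{(r)}-y^{(s)}|_n$, uniform in $k$, shows that $(P_k(y^{(r)}))_r$ is $\|\cdot\|_n$-Cauchy for each $k$ and $n$; since $Y$ is complete, $P_k(y^{(r)})\to z_k\in Y$. Taking consecutive differences and using linear independence (each $x_k\neq0$) identifies $z_k-z_{k-1}=a_kx_k$ for a well-defined scalar $a_k$, so $z_k=\sum_{i=1}^k a_ix_i$. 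A standard $3\varepsilon$-estimate, combining Cauchyness in $|\cdot|_n$ with the convergence of the partial sums of a single fixed $y^{(R)}$, shows that $(z_k)_k$ is $\|\cdot\|_n$-Cauchy and hence converges to some $y\in Y$ with $P_k(y)=z_k$; the same estimate gives $|y^{(r)}-y|_n\to0$, completing the proof of completeness.

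For the backward implication, assume $(x_j)_j$ is linearly independent and $(P_j)_j$ is equicontinuous on $\mathrm{span}\{x_j\}$. Equicontinuity lets each $P_j$ extend to a continuous operator $\tilde P_j$ on $Y$, with the family $(\tilde P_j)_j$ still equicontinuous and satisfying $\tilde P_j\tilde P_k=\tilde P_{\min(j,k)}$ by density. Since $\tilde P_j\to\mathrm{Id}$ pointwise on the dense subspace $\mathrm{span}\{x_j\}$, equicontinuity upgrades this (again via $3\varepsilon$) to $\tilde P_j(y)\to y$ for every $y\in Y$. Writing $\tilde P_j(y)=\sum_{i=1}^j a_i^{(j)}x_i$, the relation $\tilde P_k\tilde P_j=\tilde P_k$ for $k\leq j$ forces the coefficients to stabilize, so $a_i^{(j)}$ is independent of $j\geq i$; setting $a_i=a_i^{(i)}$ gives $y=\lim_j\tilde P_j(y)=\sum_i a_ix_i$, and uniqueness follows because $\tilde P_j(y)=\sum_{i=1}^j a_ix_i$ determines all partial sums while linear independence pins down the coefficients. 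Thus $(x_j)_j$ is a basis of $Y$, i.e.\ basic. The only genuine obstacle throughout is the completeness argument above; everything else is a routine application of the open mapping theorem and uniform-boundedness-style reasoning adapted to the Fréchet setting.
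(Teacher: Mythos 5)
The paper does not actually prove this lemma: it is quoted verbatim from Rolewicz (\cite[Corollary 2.6.5]{RolewiczSecondEdition1984}), so there is no in-paper argument to compare against. Your proof is the standard one for this classical fact (the Fréchet analogue of the Banach-space characterization of basic sequences via uniform boundedness of the canonical projections), and it is correct: the forward direction via the auxiliary pseudonorms $|y|_n=\sup_k\|P_k y\|_n$, completeness of $(Y,(|\cdot|_n)_n)$, and the open mapping theorem, and the backward direction via extension of the projections by equicontinuity and the stabilization of coefficients forced by $\tilde P_k\tilde P_j=\tilde P_{\min(j,k)}$. Two points are stated tersely and deserve a line each: (i) in the completeness argument, the identification $z_k-z_{k-1}=a_kx_k$ uses that the one-dimensional subspace $\R x_k$ is closed in a Hausdorff topological vector space (so limits of multiples of $x_k$ are multiples of $x_k$), not merely that $x_k\neq0$; and (ii) before invoking the open mapping theorem you should note that $(Y,(|\cdot|_n)_n)$ is Hausdorff, which follows from $\|\cdot\|_n\leq|\cdot|_n$. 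Neither is a gap, just a missing sentence. In the backward direction, your reading of the hypothesis --- equicontinuity on $\mathrm{span}\{x_j\}$, where the $P_j$ are well defined by linear independence, followed by extension to $Y$ --- is the right way to make sense of the statement, and the uniqueness of the expansion does follow as you say from $\tilde P_j(y)=\sum_{i=1}^j b_ix_i$ for any convergent representation $\sum_i b_ix_i$ of $y$ together with linear independence.
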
 

\begin{thm}\label{basis}
The subset \[\SF_b=\{X\in\SF\mid X\text{ has a basis}\}\] is analytic in $\SF$.
\end{thm}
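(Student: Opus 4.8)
The plan is to exhibit $\SF_b$ as the projection of a Borel subset of $\SF\times C(\R)^\N$; since projections of Borel sets are analytic, this yields the claim. By the lemma recalled just above, a sequence $(x_j)_j$ in a Fréchet space is basic --- hence a Schauder basis of its closed linear span --- exactly when it is linearly independent and the partial-sum projections $(P_j)_j$ are equicontinuous. Consequently $X$ has a basis if and only if there is $(x_j)_j\in C(\R)^\N$ which is basic and satisfies $\overline{\mathrm{span}}\{x_j\mid j\in\N\}=X$. First I would set
\[
B=\Big\{(X,(x_j)_j)\in \SF\times C(\R)^\N \mid (x_j)_j\text{ is basic and }\Phi((x_j)_j)=X\Big\},
\]
where $\Phi$ is the Borel map of Lemma \ref{phi}, and observe that $\SF_b$ is the projection of $B$ onto the first coordinate. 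Note that $\Phi((x_j)_j)=X$ already forces each $x_j\in X$, so no separate membership clause is needed.

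The main work is to check that $B$ is Borel, and I would treat its two defining conditions separately, the first of which does not involve $X$. Using Lemma \ref{continuous}, equicontinuity of $(P_j)_j$ is equivalent to: for every $n\in\N$ there are $K\in\Q^+$ and $m\in\N$ such that
\[
\Big\|\sum_{i=1}^{j} a_i x_i\Big\|_n \le K\Big\|\sum_{i=1}^{\ell} a_i x_i\Big\|_m
\]
for all $\ell\in\N$, all $j\le\ell$, and all $(a_1,\dots,a_\ell)\in\R^\ell$. Since both sides are continuous in the coefficients and $\le$ is a closed condition, for each fixed sequence the inner universal quantifier may be restricted to $(a_i)\in\Q^\ell$ without changing the set; all remaining quantifiers ($\forall n$, $\exists K$, $\exists m$, $\forall\ell$, $\forall j\le\ell$, $\forall(a_i)\in\Q^\ell$) then range over countable sets, and the innermost inequality defines a closed subset of $C(\R)^\N$, so this condition is Borel. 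Linear independence is also Borel: for each $n$, the set of $(x_1,\dots,x_n)$ admitting $(a_1,\dots,a_n)$ with $\max_i|a_i|=1$ and $\sum_i a_i x_i=0$ is the image of a closed set under projection along the compact sphere $\{\max_i|a_i|=1\}$, hence closed, and linear dependence is the union over $n\in\N$ of the resulting closed sets. Thus the set of basic sequences is Borel in $C(\R)^\N$.

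For the second condition, the map $(X,(x_j)_j)\mapsto (X,\Phi((x_j)_j))\in\SF\times\SF$ is Borel by Lemma \ref{phi}, and the diagonal of the standard Borel space $\SF$ is Borel; hence $\{(X,(x_j)_j)\mid \Phi((x_j)_j)=X\}$ is Borel. Intersecting the two Borel conditions shows that $B$ is Borel, and projecting onto $\SF$ gives that $\SF_b$ is analytic.

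I do not expect a genuine obstacle here: proving the analytic upper bound is essentially ``counting quantifiers'' once the characterization of basic sequences is in hand, the only points requiring a little care being the passage from topological equicontinuity to the explicit uniform coefficient inequality and the reduction of its real coefficients to rational ones. The truly hard question --- whether $\SF_b$ is in fact non-Borel, the Fréchet analogue of the basis problem --- lies beyond this statement.
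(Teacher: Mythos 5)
Your proof is correct and follows essentially the same route as the paper: both express $\SF_b$ as the projection over $C(\R)^\N$ of a Borel set built from the same characterization of basic sequences (linear independence via compactness of the finite-dimensional sphere, plus equicontinuity of the partial-sum projections written with countable quantifiers over rational data), together with the requirement that the closed span be $X$. The only cosmetic differences are that you encode the span condition as $\Phi((x_j)_j)=X$ via Lemma \ref{phi} and the Borelness of the diagonal, where the paper cites the known Borelness of the density relation, and that you phrase equicontinuity as a uniform pseudonorm inequality rather than as a neighborhood inclusion.
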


\begin{proof}
We have that
\[X\in\SF_\mathrm{b}\Leftrightarrow \exists (x_n)\in C(\R)^\N\ \big(\forall n\ x_n\in X\big)\wedge \big((x_n)\text{ is a basis for }X\big).\]
 A sequence $(x_j)_j$ in $X$ is a basis for $X$ if and only if it is linearly independent (L.I.), the sequence of projection $(P_j)_j$ is equicontinuous, and $\text{span}\{x_j\mid j\in\N\}$ is dense in $X$. Therefore, the only thing left to show is that each of those three conditions can be written in a Borel manner.

By the compactness of the finite dimensional ball, we have
\begin{align*}
(x_j)_j\in X^\N\text{ is L.I.}\ \Leftrightarrow \ &\forall n\in\N, \forall \delta\in\Q^+\cap(0,1], \exists \eps\in \Q^+\\
& \forall (a_1,...,a_n)\in\Q^{<\N} \ (\text{s.t.\ }\ \exists i\ a_i\geq\delta),\\
&\left\|\sum_{j=1}^na_jx_j\right\|_F\geq \eps.
\end{align*} 

By the definition of equicontinuity, we have that, considering the sequence of projections $(P_j)_j$ given by a linearly independent sequence $(x_j)_j$, 
\begin{align*}
(P_j)_j\text{ is equicontinuous}\ \Leftrightarrow\ &\forall m\in\N,\forall q\in\Q^+,\exists n\in\N, \exists p\in\Q^+,\\
&\forall j\in\N, \forall (a_1,...,a_\ell)\in\Q^{<\N}\\
&\sum_{k=1}^\ell a_kx_k\in B_{n,p}\Rightarrow P_j\left(\sum_{k=1}^\ell a_kx_k\right)\in B_{m,q}.
\end{align*}

The relation ``$\text{span}\{x_j\mid j\in\N\}$ is dense in $X"$ is well known to be Borel (see \cite[page 10, \textbf{(P5)}]{DodosBook2010}). We are done.
\end{proof}

 \section{Open problems}\label{SecOpenProb}
As this paper introduces the descriptive theory for the class of Fréchet spaces, this is an open field full of interesting problems to be solved. In here, we list very few of them which have called our attention during the process of writing this article.

The problem of whether the coding for the Banach spaces with a Schauder basis is Borel is arguably one of the most important problems in the field. We then ask the following: 

 \begin{problem}
Is $\SF_\mathrm{b}$ Borel? If $\SF_\mathrm{b}$ is non Borel, is it complete analytic?  
\end{problem}

A locally convex topological vector space $X$ is called \emph{barreled} if every subset of $X$ which is closed, convex, absorbing, and balanced is a neighborhood of $0$. We then say that a Fréchet spaces $X$ is \emph{distinguished} if $X^*$, its strong dual,  is barreled. 

\begin{problem}
   Consider 
    \[\mathrm{D}=\{X\in \SF\mid X\ \text{ is distinguished}\}.\]
What is the precise complexity of $\rm D$?\end{problem}

 There is a sensitive reason to expect that $\mathrm D$ is at least  $\Pi^1_2$-hard; hence, a very complex class. This is motivated by  \cite[Proposition 27.17]{MeiseVogt1997Book}. This result characterizes the K\"{o}the matrices $A$ for which $\lambda_1(A)$ is distinguished and this quantification is of complexity $\Pi^1_2$ in nature, i.e., a universal quantifier over a Polish space followed by an existential one.

\section{Appendix}\label{appendix}

The coding of separable Fréchet spaces $\SF$ is by no means unique. Therefore, a natural question is whether different codings can give  different complexities for a given class of separable Fréchet spaces $\cP$. For completeness of these notes, in this appendix we recall the argument of B.  Bossard which shows that,  for reasonable enough codings, this does not occur. 

First, we define a \emph{coding of separable Fréchet spaces up to isomorphism} as a pair $(c,E)$, where $E$ is a set and  $c$ is a function from $E$ onto the quotient $\SF/{\cong}$. The coding we were working with in the previous sections is the coding $E=\SF$ and $c(X)=\langle X\rangle$, where $\langle X\rangle$ denotes the equivalence class of Fréchet spaces isomorphic to $X$. 

\begin{prop}\emph{(}\cite[Proposition 2.8]{Bossard2002}\emph{).} 
Let $E$ and $F$ be standard Borel spaces, and $G$ be a set. Let $c_1\colon E\to G$ and $c_2\colon F\to G$ be surjective functions. If the set $\{(x,y)\in E\times F\mid c_1(x)=c_2(y)\}$ is analytic we have

\begin{enumerate}
\item \label{absconsistent.1} If $A\subseteq E$ is $\Sigma_n^1$, for some $n\in\N$, so is $c_2^{-1}(c_1(A))$.
\item \label{absconsistent.2}  Let $C\subseteq G$, and $n\in\N$. Then $c_1^{-1}(C)$ is $\Sigma^1_n$ (resp. $\Pi^1_n$) in $E$ if and only if $c_2^{-1}(C)$ is $\Sigma^1_n$ (resp. $\Pi^1_n$) in $F$.  
\end{enumerate}
In particular, Lusin's separation theorem implies that $c_1^{-1}(C)$ is Borel in $E$ if and only if $c_2^{-1}(C)$ is Borel in $F$.\label{absconsistent}
\end{prop}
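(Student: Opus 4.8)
The plan is to route everything through the analytic relation
\[
R=\{(x,y)\in E\times F\mid c_1(x)=c_2(y)\}
\]
and to invoke only the standard closure properties of the projective classes $\Sigma^1_n$ (closed under Borel preimages, finite intersections, and projection along a standard Borel factor), together with the surjectivity of $c_1$ and $c_2$. For \eqref{absconsistent.1}, I would first record the set-theoretic identity
\[
c_2^{-1}(c_1(A))=\{y\in F\mid \exists x\in A,\ c_1(x)=c_2(y)\}=\mathrm{proj}_F\big((A\times F)\cap R\big),
\]
where $\mathrm{proj}_F\colon E\times F\to F$ is the coordinate projection. If $A$ is $\Sigma^1_n$, then $A\times F$ is $\Sigma^1_n$ (being the preimage of $A$ under the projection onto $E$), and since $R$ is analytic and $n\geq 1$, the intersection $(A\times F)\cap R$ is $\Sigma^1_n$; projecting a $\Sigma^1_n$ set along $E$ keeps it $\Sigma^1_n$, which gives \eqref{absconsistent.1}.

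For \eqref{absconsistent.2}, the key point is that surjectivity of $c_1$ yields $c_1(c_1^{-1}(C))=C\cap\mathrm{range}(c_1)=C$, so that $c_2^{-1}\big(c_1(c_1^{-1}(C))\big)=c_2^{-1}(C)$. Hence, if $c_1^{-1}(C)$ is $\Sigma^1_n$, applying \eqref{absconsistent.1} to $A=c_1^{-1}(C)$ shows that $c_2^{-1}(C)$ is $\Sigma^1_n$. The converse is symmetric: the flip $\{(y,x)\mid c_2(y)=c_1(x)\}$ is analytic, being the image of $R$ under the coordinate-swap homeomorphism, so the hypotheses are invariant under interchanging $(E,c_1)$ and $(F,c_2)$, and the same argument run with the roles reversed gives the other implication. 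This settles the $\Sigma^1_n$ equivalence. The $\Pi^1_n$ equivalence then follows by applying the $\Sigma^1_n$ case to the complement $G\setminus C$, using $c_1^{-1}(G\setminus C)=E\setminus c_1^{-1}(C)$ and $c_2^{-1}(G\setminus C)=F\setminus c_2^{-1}(C)$, so that $c_i^{-1}(C)$ is $\Pi^1_n$ exactly when $c_i^{-1}(G\setminus C)$ is $\Sigma^1_n$.

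Finally, the ``in particular'' clause is immediate: by Suslin's theorem a subset of a standard Borel space is Borel if and only if it is both $\Sigma^1_1$ and $\Pi^1_1$, so combining the $n=1$ cases of the $\Sigma^1_1$ and $\Pi^1_1$ equivalences shows that $c_1^{-1}(C)$ is Borel in $E$ if and only if $c_2^{-1}(C)$ is Borel in $F$. I expect no serious obstacle here: the content lies entirely in the bookkeeping, and the only two genuinely load-bearing facts are the surjectivity of the codings (which turns $c_i^{-1}(C)$ back into $C$) and the symmetry of the analyticity hypothesis on $R$; everything else is a routine application of the closure of $\Sigma^1_n$ under projections and Boolean operations.
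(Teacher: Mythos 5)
Your proposal is correct and follows essentially the same route as the paper's argument: part \eqref{absconsistent.1} via writing $c_2^{-1}(c_1(A))$ as the projection of $(A\times F)\cap R$, part \eqref{absconsistent.2} via surjectivity giving $c_1(c_1^{-1}(C))=C$ plus the symmetric argument, the $\Pi^1_n$ case by passing to $G\setminus C$, and the Borel case by Lusin/Suslin. The only difference is that you spell out the closure properties of $\Sigma^1_n$ more explicitly than the paper does, which is harmless.
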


Let us apply the proposition above to another natural coding of the separable Fréchet spaces to see that the complexity of classes of separable Fréchet spaces coincides in both codings.

Let $\tilde{c}\colon C(\R)^\N\to\SF/{\cong}$ be such that $c((f_n)_n)=\overline{\text{span}}\{f_n\mid n\in\N\}$. The pair $(\tilde{c},C(\R)^\N)$ is another natural coding for $\SF$. Let $(c,\SF)$ be our regular coding and let us observe that the functions $c\colon \SF\to\SF/{\cong}$ and $\tilde{c}\colon C(\R)^\N\to\SF/{\cong}$ fit the assumptions of Proposition \ref{absconsistent}. Indeed, let $\Phi$ be the function defined in Lemma \ref{phi}. We then have 
\begin{align*} 
\{(X,(f_n)_n)\in\SF\times C(\R)^\N\mid & c(X)
=\tilde{c}((f_n)_n)\}\\
&=(\rm{Id},\Phi)^{-1}(\{(X,Y)\in\SF^2\mid X\cong Y\}),
\end{align*}
 where $\rm{Id}$ denotes the identity on $\SF$. By Lemma \ref{iso} and Lemma \ref{phi}, we conclude that $\{(X,(f_n)_n)\in\SF\times C(\R)^\N\mid c(X)=\tilde{c}((f_n)_n)\}$ is analytic.  Therefore, Proposition \ref{absconsistent} implies that both codings $(\tilde{c},C(\R)^\N)$ and $(c,\SF)$ give us the same complexity for any given class of separable Fréchet spaces.\\

\noindent \textbf{Acknowledgments:} The first named author would like to thank Joe Diestel for his suggestion of looking at separable Fréchet spaces in this context back in 2013.  The authors are thankful to José Bonet, Gilles Godefroy, Willian B. Johnson, Gillies Lancien, Quentin Menet,  and Dietmar Vogt for conversations about the state of art of the Fréchet space theory.

\bibliographystyle{alpha}
\bibliography{bibliography}
 \end{document}